\newcommand{\R}{\mathbb R}
\newcommand{\positR}{(0,\infty)}
\newcommand{\C}{\mathbb C}
\newcommand{\N}{\mathbb N}
\newcommand{\E}{\mathbb E}
\newcommand{\im}{\mathbf{i}}
\newcommand{\etal}{\textit{et al}. }
\newcommand{\K}{\mathcal K}
\renewcommand{\P}{\mathbb P}
\newcommand{\bz}{\mathbf{z}}
\newcommand{\by}{\mathbf{y}}
\DeclareMathOperator*{\spec}{Spec}
\newcommand{\Yc}{\mathcal Y}
\newcommand{\ind}{\mathbbm{1}}
\newcommand{\tlambda}{\tilde{\lambda}}
\newcommand{\gNr}{\mathcal N}
\newcommand{\dd}{\,\mathrm d}
\newcommand{\tq}{\,:\,}
\newcommand{\as}{\mathrm{a.s}}
\DeclareMathOperator*{\tr}{tr}
\newcommand{\range}{\mathcal{R}}
\DeclareMathOperator*{\supp}{supp}
\DeclareMathOperator*{\diag}{diag}
\DeclareMathOperator*{\var}{Var}
\newcommand{\tran}{\top}
\newcommand{\cvweak}[1][]{%
\ifthenelse{\equal{#1}{}}{\xrightarrow{\mathcal D}}{\xrightarrow[#1]{\mathcal D}}%
}
\newcommand{\cvprob}{\xrightarrow{\mathcal P}}
\newcommand{\dlp}{d_{\mathcal{LP}}}
\newtheorem{Th}{Theorem}[section]
\newtheorem{lemma}[Th]{Lemma}
\newtheorem{prop}[Th]{Proposition}
\newtheorem{corol}[Th]{Corollary}
\theoremstyle{definition}
\newtheorem{defi}{Definition}
\newtheorem{defi-prop}[Th]{Definition-Proposition}
\newtheorem*{example*}{Example}
\theoremstyle{remark}
\newtheorem*{Rq*}{Remark}
\newtheorem{Rq}{Remark}[section]
\numberwithin{equation}{section}
\title{Joint CLT for top eigenvalues of sample covariance matrices of separable high dimensional long memory processes}
\author{Peng TIAN \footnote{Department of Statistics and Actuarial Science, The University of Hong Kong. Email: tianpeng83@gmail.com}}
\date{\today}
\begin{document}
\maketitle

\begin{abstract}
For $N,n\in\N$, consider the sample covariance matrix 
$$S_N(T)=\frac{1}{N}XX^*$$
from a data set $X=C_N^{1/2}ZT_n^{1/2}$, where $Z=(Z_{i,j})$ is a $N\times n$ matrix having i.i.d. entries with mean zero and variance one, and $C_N, T_n$ are deterministic positive semi-definite Hermitian matrices of dimension $N$ and $n$, respectively. We assume that $(C_N)_N$ is bounded in spectral norm, and $T_n$ is a Toeplitz matrix with its largest eigenvalues diverging to infinity. The matrix $X$ can be viewed as a data set of an $N$-dimensional long memory stationary process having separable dependence structure.

As $N,n\to \infty$ and $Nn^{-1} \to r\in (0,\infty)$, we establish the asymptotics and the joint CLT for  $(\lambda_1(S_N(T)),\cdots, \lambda_m(S_N(T))^\tran$ where $\lambda_j(S_N(T))$ denotes the $j$th largest eigenvalue of $S_N(T)$, and $m$ is a fixed integer. For the CLT, we first study the case where the entries of $Z$ are Gaussian, and then we generalize the result to some more generic cases. This result substantially extends our previous result in \cite{merlevede2019unbounded}, where we studied $\lambda_1(S_N(T))$ in the case where $m=1$ and $X=ZT_n^{1/2}$ with $Z$ having Gaussian entries.

In order to establish this CLT, we are led to study the first order asymptotics of the largest eigenvalues and the associated eigenvectors of some deterministic Toeplitz matrices. We are specially interested in the autocovariance matrices of long memory stationary processes. We prove multiple spectral gap properties for the largest eigenvalues and a delocalization property for their associated eigenvectors.
\end{abstract}

\section{Introduction}
\label{sec:intr}
\paragraph{Background and related work.} 
For $N,n\in\N$, we set $X=C_N^{1/2}ZT_n^{1/2}$, where $Z=(Z_{i,j})$ is a $N\times n$ matrix having i.i.d. entries with mean zero and variance one,  $C_N$ is a $N\times N$ deterministic positive semi-definite Hermitian matrix, and $T_n=(\gamma(i-j))_{1\le i,j\le n}$ be a positive semi-definite Toeplitz matrix. Then $X$ models a sample data set of an $N$-dimensional stationary process with a separable dependence structure. If the entries of $T_n$ satisfies the following power decay condition:
\begin{equation}
    \gamma(h)=\frac{L_1(|h|)}{(1+|h|)^\rho}, \label{eq:LM_gamma}
\end{equation}
or the spectral density $\varphi$ of $T_n$ has a power singularity at $0$:
\begin{equation}
    \varphi(x)=\frac{L_2(|x|^{-1})}{|x|^{1-\rho}} , \quad \text{for } x\in [-\pi,\pi], \label{eq:LM_f}
\end{equation}
where $\rho\in(0,1)$ and $L_1, L_2$ are positive, locally bounded functions which are slowly varying at $\infty$, then the process is long range dependent (LRD) or has long memory (LM) (see for example \cite{pipiras2017long}). Note that the conditions \eqref{eq:LM_gamma} and \eqref{eq:LM_f} are not equivalent but tightly related, see Section~2.2.5 of \cite{pipiras2017long}. 

From the sample data matrix $X$, we construct the sample covariance matrix 
\begin{equation}
    S:=\frac{1}{N}XX^* = \frac{1}{N}C_N^{1/2}Z T_n Z^*C_N^{1/2}\,. \label{eq:sample_cov_T}
\end{equation}
Let $m\ge 1$ be a fixed integer. We will study the asymptotics and joint CLT of the largest eigenvalues of $S$ as $N,n\to\infty$ and $r_n:=Nn^{-1}\to r\in\positR$.

For convenience of further discussion, we define the notation $S_N(\Gamma)$ by
\begin{equation}
    S_N(\Gamma):=\frac{1}{N}C_N^{1/2}Z\Gamma_n Z^*C_N^{1/2}\,. \label{eq:themodel}
\end{equation}
where $\Gamma_n$ is a $n\times n$ deterministic positive semi-definite Hermitian matrix. Then $S$ in \eqref{eq:sample_cov_T} will be denoted as $S_N(T)$, and if $\Gamma_n=I_n$ is identity, $S_N(I)$ is the classical sample covariance matrix. 

The classical sample covariance model $S_N(I)$ has been intensively studied in the last decades. These studies are mostly concentrated on the global behaviors of the spectrum, including limiting spectral distribution (LSD) (\cite{marvcenko1967distribution, wachter1978strong,jonsson1982some,yin1986limiting,silverstein1995empirical,silverstein1995strong}) and CLT for linear spectral statistics (\cite{bai2004clt,bai2010functional,guedon2014central,najim2016gaussian}); and also the local behaviors of individual eigenvalues (\cite{baik2005phase,johansson2000shape,johnstone2001distribution,karoui2007tracy,lee2016tracy,bao2015universality,knowles2017anisotropic,bai2008central,bai2012sample}). 

Recently several models of matrices $S_N(I)$ with $C_N$ having a small number of divergent eigenvalues have been considered, in the context of principal component analysis (PCA) \cite{jung2009pca, shen2013surprising, wang2017asymptotics, cai2017limiting} and long memory processes \cite{merlevede2019unbounded}. Although the assumptions in these various works differ, many results coincide with the degenerated case of Bai and Yao \cite{bai2012sample} after normalization (see  for example \cite{merlevede2019unbounded}). 

The model $S_N(I)$ assumes that the columns of the data matrix $X$ are i.i.d. However this is not always the case in the practical applications. The separable model $S_N(\Gamma)$ introduces a special type of correlations between columns, or different weights on columns, achieving certain balance between generality and simplicity. So it attracts more and more attention nowadays. A first result on this model is due to Zhang \cite{lixin2006spectral} on the LSD of $S_N(\Gamma)$. She proved that if the empirical spectral distribution (ESD) $\mu^{\Gamma_n}$ of $\Gamma_n$ and the ESD  $\mu^{C_N}$ of $C_N$ converge weakly to $\nu_\Gamma$ and $\nu_C$  respectively, then as $N,n\to\infty$,  the ESD $\mu^{S_N(\Gamma)}$ of $S_N(\Gamma)$ will converge weakly to a non-random probability measure $\mu$ for which if $\nu_C=\delta_0$ or $\nu_\Gamma=\delta_0$, then $\mu=\delta_0$; otherwise for each $z\in\C^+:=\{z\in\C\tq \Im z>0\}$, the Cauchy-Stieltjes transform $s(z)$ of $\mu$, together with another two functions, denoted by $g_\Gamma(z)$ and $g_C(z)$, $(s(z),g_\Gamma(z),g_C(z))$ is the unique solution in the set
$$U=\left\{(s(z),g_\Gamma(z),g_C(z))\tq \Im s(z)<0, \quad \Im(zg_\Gamma(z))<0,\quad \Im(zg_C(z))<0\right\}$$
to the following system of equations
\begin{equation}
    \begin{cases} s(z)=z^{-1}(1-r)+z^{-1}r\int\frac{1}{1-g_C(z)x}\dd \nu_\Gamma(x)\,, \\
    s(z)=z^{-1}\int\frac{1}{1-g_\Gamma(z)x}\dd\nu_C(x)\,, \\
    s(z)=z^{-1}+g_\Gamma(z)g_C(z)\,.
    \end{cases} \label{eq:sys_equation}
\end{equation}
Later, Paul and Silverstein \cite{paul2009no} proved in the case where $\Gamma_n$ is diagonal, that almost surely, for large enough $N,n$, there is no eigenvalue of $S_N(\Gamma)$ in any closed interval outside the support of the limiting spectral distribution (LSD). This is an extension of the results of \cite{bai1998no} for $S_N(I)$. Couillet and Hachem \cite{couillet2014analysis} studied the analytical properties of the LSD $\mu$ when $\nu_\Gamma\ne\delta_0$ and $\nu_C\ne\delta_0$, including the determination of the support of $\mu$, extending the work of Choi and Silverstein \cite{silverstein1995analysis} for $S_N(I)$ to the separable model $S_N(\Gamma)$. The CLT for linear spectral statistics has also been studied by Bai \etal \cite{zhidong2016central} and Li \etal \cite{li2019central}.

Regarding the extreme eigenvalues of $S_N(\Gamma)$, far less is known compared to the classical model $S_N(I)$. In \cite{yang2018edge}, Yang proved the edge universality under the condition that the densities of the LSD's $\nu_\Gamma,\nu_C$ have a regular square-root behavior at the rightmost edge (soft edge). With this result, if we find the fluctuations of the largest eigenvalue $\lambda_1(S_N(\Gamma))$ at a soft edge in the case where the entries $Z_{i,j}$ are Gaussian, then the fluctuations at a soft edge in general cases will be determined. However, even in the Gaussian case, these fluctuations are still unknown.

The general spiked eigenvalues of $S_N(\Gamma)$ are also a new topic in the recent studies. In \cite{zhang2018clt}, a very particular case of this problem has been touched. One can refer to Remark~\ref{rq:order_theta_j_m1} for the relations between the concerned results of \cite{zhang2018clt} and the ours. During the preparation of the present paper, we learned that a newly submitted paper \cite{ding2019spiked} treated the general spike separable model $S_N(\Gamma)$ with $C_N, \Gamma_n$ general Hermitian matrices having a finite number of spikes. The authors studied the asymptotics and large deviations (instead of joint CLT) of spiked eigenvalues of $S_N(\Gamma)$, and the associated eigenvectors. The main restriction of \cite{ding2019spiked} is that they assumed that $C_N$ and $\Gamma_n$ are all bounded in spectral norm, and that the spectrums of both $C_N$ and $\Gamma_n$ do not concentrate at zero, also that the number of spikes is finite. These assumptions exclude our model $S_N(T)$ from applying their results.

\paragraph{Introduction to the results.} The present paper aims at studying the asymptotics and the joint CLT of $m$ (with $m\ge 1$ an arbitrary fixed integer) largest eigenvalues of $S_N(T)$. The basic idea is analogous to the previous article \cite{merlevede2019unbounded} but we extend substantially the results of that paper. 

In order to study the largest eigenvalues of $S_N(T)$, the asymptotics of largest eigenvalues and the associated eigenvectors of $T_n$ satisfying \eqref{eq:LM_gamma} or \eqref{eq:LM_f} will be studied. In \cite{merlevede2019unbounded} we proved that the ESD of $T_n$ converges weakly to a non-compact supported measure, thus the largest eigenvalues of $T_n$ diverges to infinity. We also studied the asymptotic behavior of largest eigenvalues of $T_n$ if it satisfies \eqref{eq:LM_gamma}, and proved that $\lambda_j(T_n)\sim n^{1-\rho}L_1(n)\lambda_j(\K^{(\rho)})$ as $n\to\infty$ and $j\ge 1$ is fixed, where $\K^{(\rho)}$ is a compact operator defined in \eqref{eq:defi_K_rho} below. By proving the simplicity of $\lambda_1(\K^{(\rho)})$, we proved also the spectral gap property for the largest eigenvalue of $T_n$, that is, $\lambda_2(T_n)/\lambda_1(T_n)$ is bounded by a constant smaller than $1$. However, it is well known that the two conditions \eqref{eq:LM_gamma} and \eqref{eq:LM_f} are not equivalent without the quasi-monotone conditions on $L_1$ or $L_2$. For example, Gubner  \cite{gubner2005theorems} gave counterexamples in both directions of implication. In this paper, we will prove that if $T_n$ satisfies \eqref{eq:LM_f}, without assuming quasi-monotonicity of $L_2$, we have
\begin{equation}
    \lambda_j(T_n)\sim 2\Gamma(\rho)\sin\left(\frac{\rho\pi}{2}\right)n^{1-\rho}L_2(n)\lambda_j(\K^{(\rho)})
\end{equation}
as $n\to\infty$, where $\Gamma$ is the Gamma-function. We will also prove that all nonzero eigenvalues of $\K^{(\rho)}$ are simple, using a method totally different from \cite{merlevede2019unbounded}. In consequence, the spectral gap property holds for any finite number of largest eigenvalues of $T_n$. Furthermore, we will study the relation between the eigenvectors of $T_n$ and the eigenfunctions of $\K^{(\rho)}$, and prove that the eigenvectors associated with the largest eigenvalues of $T_n$ are delocalized. These results may have independent interests. 

Using the results on Toeplitz matrices, we study the asymptotics and fluctuations of largest eigenvalues of $S_N(T)$ as $N,n\to\infty$ with $N/n\to r\in\positR$. We first prove the following asymptotic behavior of $\lambda_j(S_N(T))$ for any fixed $j$. 
Assume that the entries of $Z$ have finite fourth moment, and some other conditions, then for any $j\ge 1$, in probability,
\begin{equation}
    \lambda_j(S_N(T))\sim \lambda_j(T_n)\frac{\tr C_N}{N}. \label{eq:intro:equiv}
\end{equation}
Moreover, if the entries $Z_{i,j}$'s are Gaussian, \eqref{eq:intro:equiv} holds almost surely. 

Then we will build the joint CLT of largest eigenvalues for the generic model $S_N(\Gamma)$ which implies in particular the following results: suppose that $T_n$ satisfies \ref{eq:LM_gamma} or \ref{eq:LM_f}, and $Z$ has i.i.d. standard Gaussian entries, with $C_N$ satisfies some mild conditions, then as $N,n\to\infty$ with $N/n\to r\in \positR$,
\begin{equation}
    \sqrt{N}\begin{pmatrix}\frac{\lambda_1(S_N(T))}{\lambda_1(T_n)}-\theta_1 & \cdots &  \frac{\lambda_m(S_N(T))}{\lambda_m(T_n)}-\theta_m \end{pmatrix}^\tran \cvweak \gNr(0,\sigma^2 I_m) \label{eq:intro_result_clt1}
\end{equation}
where $\sigma^2=2$ when the entries of $Z$ are real Gaussian, and $\sigma^2=1$ when the entries of $Z$ are complex Gaussian, and $\theta_j$ are determined by some equations. We then generalize the result to the non-Gaussian case.

If the entries of $Z$ are not Gaussian, it is more complicated. The CLT of eigenvalues of $S_N(T)$ depends also on the eigenvectors of $T_n$ and $C_N$. We will prove a general theorem implying that, when $C_N$ is diagonal, and the parameter $\rho$ of $T_n$ defined in \eqref{eq:LM_gamma} or \eqref{eq:LM_f} belongs to $(0,3/4)$, that is, the decay of the correlation of the process is sufficiently slow, then \eqref{eq:intro_result_clt1} still holds with the same $\theta_j$ and $\sigma^2=1+|\E Z_{1,1}^2|^2$, where $Z_{1,1}$ is the entry of $Z$ in the first row and first column. Note that if $Z_{1,1}$ is real with variance one, or if $Z_{1,1}$ is complex with $\E Z_{1,1}^2=0$, we will get the same CLT as the Gaussian case. This phenomenon is due to the delocalization of the largest eigenvectors of Toeplitz matrices $T_n$. 

\paragraph{Organizations.} This paper is organized as follows. In Section~\ref{sec:main_results} we state our main theorems. This section is divided in three parts. In \ref{subsec:toeplitz}, we state the results on Toeplitz matrices $T_n$; in \ref{subsec:convergence_eigenvalue}, we state the asymptotics of the largest eigenvalues of $S_N(\Gamma)$; in \ref{subsec:CLT_diag} we state the CLT for largest eigenvalues of $S_N(\Gamma)$ in the case where $C_N, \Gamma_n$ are diagonal, or where $Z_{i,j}$ are Gaussian; in \ref{subsec:generalclt}, we present some generalizations of the CLT with non-diagonal $\Gamma_n$. The other sections contains the proofs of these results.

\paragraph{Notations.} For a Hermitian operator or matrix $A$, we denote its real eigenvalues by decreasing order as
$$\lambda_1(A) \ge \lambda_2(A)\ge \dots $$
For a matrix or a vector $A$, we use $A^\tran$ to denote the transpose of $A$, and $A^*$ the conjugate transpose of $A$. For a $N\times N$ matrix $A$, we denote the ESD of $A$ by $\mu^A$, which is defined by $\mu^A:=N^{-1}\sum_{i=1}^N \delta_{\lambda_i(A)}$, where $\delta_x$ is the Dirac measure at $x$.

The kernel of a linear operator $A: X\rightarrow X$ is denoted by $\ker A$.
The spectrum of $A$ is denoted by $\spec(A)$. We denote the $L^p$ or $l^p$ norm by $\|\cdot\|_p$. For a matrix or a linear operator $A$, the operator norm of $A$ induced by vector norm $\|\cdot\|_p$ is denoted by $\|A\|_p$, and we recall that $\|A\|_p:=\sup_{\|v\|_p=1}\|Av\|_p$. The $L^2$ or $l^2$ norm will be abbreviated as $\|\cdot\|$. We say that a function $f$ or a vector $v$ is "normalized" or "unit length" when $\|f\|=1$ or $\|v\|=1$. When functions or vectors are said to be "orthonormal", they will be implicitly considered as elements of a Hilbert space.

For two probability measures $P$ and $Q$ on $\R^m$, we denote their L\'evy-Prokhorov distance by $\dlp(P,Q)$ which is defined by
\begin{equation}
    \dlp(P,Q):=\inf\{\varepsilon \tq P(A)\le Q(A^\varepsilon)+\varepsilon, Q(A)\le P(A^\varepsilon)+\varepsilon , \forall A\in\mathcal B(\R^m)\},
\end{equation}
where $A^\varepsilon$ is defined by
$$A^\varepsilon:=\{x\in\R^m \tq \exists y\in A, \text{ s.t. }\|x-y\|<\varepsilon\}.$$
It is well known that this distance metrizes the weak convergence. For two random variables $X,Y\in\R^m$ with distributions $\mu_X, \mu_Y$, respectively, we sometimes write $\dlp(X,Y)$, $\dlp(X,\mu_Y)$ or $\dlp(\mu_X,Y)$ which all mean $\dlp(\mu_X,\mu_Y)$.

Given $x\in\R$, we denote by $\lfloor x\rfloor$ the integer satisfying $ \lfloor x\rfloor\le x <\lfloor x\rfloor+1$. Given two sequences of non-negative numbers $x_n, y_n$, $x_n\sim y_n$ means that $\lim_{n\to\infty}x_n/y_n=1$. If $X_n,X$ are random variables, the notation $X_n=o_P(1)$ means that $\lim_{n\to\infty} X_n=0$ in probability. The notations $X_n\cvweak X$ and $X_n\cvprob X$ denote convergence in distribution and in probability, respectively. If $\mu_n,\mu$ are measures, we denote with a slight abuse of notation $\mu_n\cvweak \mu$ for the weak convergence of $\mu_n$ to $\mu$. 

In order to estimate some quantities we need sometimes split it into several parts. When we use $P_1, P_2, \dots$ to denote these parts, their definition is limited in the same proof, the same section or subsection.

\begin{defi} \label{def:proba_termino}
We say that a sequence of events $E_n$ hold {\em with high probability}, if $\P(E_n)=1-o(1)$; {\em with low probability}, if $\P(E_n)=o(1)$; {\em with overwhelming probability}, if for any $M>0$, $\P(E_n)=1-o(n^{-M})$; {\em with tiny probability}, if for any $M>0$, $\P(E_n)=o(n^{-M})$.
\end{defi}

The cardinal of a set $B$ is denoted by $\# B$. In the proofs we use $K$ to denote a constant that may take different values from one place to another. If the constant depends on some parameter $p$, we denote the constant by $K_p$.

\paragraph{Acknowledgement.} Part of this work was completed during my PhD study, and was supported financially by Universit\'e Paris-Est and Labex B\'ezout. I would like to thank gratefully my Ph-D advisers Professor Florence M\`ERLEVEDE and Professor Jamal NAJIM. Also thanks to Professor Jianfeng YAO in the University of Hong Kong for fruitful discussions.

\section{Main theorems}
\label{sec:main_results}

\subsection{Spectral properties of Toeplitz matrices}
\label{subsec:toeplitz}
We collect our results on Toeplitz matrices in this section. Let $T_n=\left(\gamma(i-j)\right)_{i,j=1}^n$ satisfy \eqref{eq:LM_gamma}. Let $\K^{(\rho)}$ be the operator defined on $L^2(0,1)$ by
\begin{equation}
(\K^{(\rho)}f)(x)=\int_0^1 \frac{f(y)}{|x-y|^\rho}\dd y\,, \quad \text{ for } f\in L^2(0,1)\,. \label{eq:defi_K_rho}
\end{equation}

In \cite{merlevede2019unbounded}, we have established the relation between the eigenvalues of $T_n$ and the eigenvalues of $\K^{(\rho)}$. From the proof of Theorem~2.3 of \cite{merlevede2019unbounded} we know that the operator $\K^{(\rho)}$ is compact and positive semi-definite. It has infinitely many positive eigenvalues. And if $T_n$ is defined by \eqref{eq:LM_gamma}, then for any $j\ge 1$, we have
\begin{equation}
    \lim_{n\to\infty}\frac{\lambda_j(T_n)}{n^{1-\rho}L_1(n)} = \lambda_j(\K^{(\rho)})>0\,. \label{eq:Toep_asy_eval}
\end{equation}
Using the min-max formula for the largest eigenvalue and an argument by absurd, we have also proved in \cite{merlevede2019unbounded} that $\lambda_1(\K^{(\rho)})$ is simple, so that we proved the spectral gap property for the largest two eigenvalues of $T_n$:
$$\lim_{n\to \infty}\frac{\lambda_2(T_n)}{\lambda_1(T_n)}<1\,.$$

In this paper, using a different method, we will prove that all non-zero eigenvalues of $\K^{(\rho)}$ are simple. As a consequence we prove the multiple spectral gap property for any $j$th largest eigenvalue of $T_n$:
\begin{equation}
    \lim_{n\to \infty}\frac{\lambda_{j+1}(T_n)}{\lambda_j(T_n)}<1\,. \label{eq:spectral_gap_j}
\end{equation}

\begin{prop}\label{prop:simplicity_eigen} All non-zero eigenvalues of the operator $\K^{(\rho)}$ defined by \eqref{eq:defi_K_rho} with $\rho\in(0,1)$ are simple, and the associated eigenfunctions are continuous in $[0,1]$.
\end{prop}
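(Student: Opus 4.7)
If $\lambda\ne 0$ and $\K^{(\rho)}f=\lambda f$, then $f$ coincides a.e.\ with $\lambda^{-1}\K^{(\rho)}f$, so it is enough to show that $\K^{(\rho)}$ sends eigenfunctions into $C[0,1]$. Since $\rho<1$, the kernel is integrable in $y$ uniformly in $x$, and Cauchy--Schwarz gives H\"older continuity of $\K^{(\rho)}g$ on $[0,1]$ for every $g\in L^2(0,1)$ when $\rho<1/2$. For $\rho\in[1/2,1)$, I would first bootstrap by Young's convolution inequality: iterating $f=\lambda^{-1}\K^{(\rho)}f$ sends $f$ successively into larger $L^p$ spaces and eventually into $L^\infty$, at which point H\"older regularity of $\K^{(\rho)}f$ is direct. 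Either way, each non-trivial eigenfunction admits a unique continuous representative on $[0,1]$.

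\textbf{Simplicity, the plan.} The plan is to convert the non-local eigenvalue equation into a local ODE and then count independent solutions compatible with the regularity just established. Writing the kernel as a sum of Riemann--Liouville fractional integrals,
\[
\K^{(\rho)}f=\Gamma(1-\rho)\bigl(I_{0+}^{1-\rho}f+I_{1-}^{1-\rho}f\bigr),
\]
and applying the corresponding fractional derivative operators $D_{0+}^{1-\rho}$, $D_{1-}^{1-\rho}$ in a symmetric fashion to $\lambda f=\K^{(\rho)}f$, one should obtain a second-order linear ODE on $(0,1)$ of Sturm--Liouville type whose only singularities are regular singular points at $x=0$ and $x=1$. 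A Frobenius analysis at each endpoint then yields two linearly independent local solutions with characteristic exponents depending on $\rho$; the boundary conditions inherited from the original integral equation (continuity of $f$ on $[0,1]$, together with the prescribed endpoint behaviour inferred from a careful expansion of $\K^{(\rho)}f$ near $0$ and $1$) select at most one admissible exponent at each end. Matching the two resulting one-dimensional spaces of admissible solutions across $(0,1)$ forces the eigenspace to have dimension at most one.

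\textbf{Main obstacle, and an alternative route.} The delicate step is the passage from the integral to the differential equation: $I_{0+}^{1-\rho}$ and $I_{1-}^{1-\rho}$ do not commute, their compositions with the symmetric fractional derivatives generate boundary contributions of the form $f(0),f(1)$ (and possibly one further endpoint value), and the interchange of fractional differentiation with integration must be justified via the bootstrapped regularity of $f$. A more hands-on alternative, which may avoid this technical overhead, is a direct Puiseux expansion in situ: insert the ansatz $f(x)=\sum_{k\ge 0}a_k\,x^{k+\alpha}$ near $x=0$ into $\lambda f=\K^{(\rho)}f$, evaluate the contribution of each monomial via the Beta identity $\int_0^1 y^{k+\alpha}(1-y)^{-\rho}\dd y=B(k+\alpha+1,1-\rho)$ together with a matching expansion for the $(x-y)^{-\rho}$ piece, and match coefficients. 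The continuity of $f$ at $x=0$ pins down $\alpha$, and the resulting recursion determines every $a_k$ from $a_0$, giving a one-parameter family of local solutions; combined with the real-analyticity of eigenfunctions on $(0,1)$ (inherited from the analyticity of the kernel off the diagonal), this propagates uniqueness to the whole interval and yields simplicity.
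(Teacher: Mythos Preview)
Your continuity argument is essentially sound: bootstrapping via Young's inequality does push an eigenfunction into $L^\infty$, after which H\"older continuity of $\K^{(\rho)}f$ on $[0,1]$ is immediate. The paper does this a bit more cleanly by first establishing that the nonzero spectrum and eigenfunctions of $\K^{(\rho)}$ are the same on every $L^p(0,1)$, $p\in[1,\infty]$, so any $L^2$-eigenfunction is automatically in $L^\infty$; but your route would also get there.

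The simplicity argument, however, is not a proof but a programme, and both branches have real gaps. On the ODE route you correctly identify the obstacle---the one-sided fractional integrals do not commute, and the resulting boundary terms---and then simply leave it. On the Puiseux route the load-bearing claim is real-analyticity of eigenfunctions on $(0,1)$, which you justify by ``analyticity of the kernel off the diagonal''. That does not do it: the eigenvalue identity involves integration \emph{through} the diagonal singularity $|x-y|^{-\rho}$, and already the formal $x$-derivative of $\int_0^1|x-y|^{-\rho}f(y)\dd y$ produces the non-integrable weight $|x-y|^{-\rho-1}$. So even $C^1$ regularity on $(0,1)$ is not obvious without additional argument, let alone analyticity; and without analyticity, the local power-series determination near $x=0$ cannot be propagated. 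The assertions that ``continuity at $0$ pins down $\alpha$'' and that ``the recursion determines every $a_k$ from $a_0$'' are also not substantiated.

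The paper's argument avoids all of this with a short scaling trick. For $\tau>0$ let $\K^{(\rho)}_\tau$ act on $L^2(0,\tau)$ by the same kernel; a change of variable shows $f$ is a $\lambda$-eigenfunction of $\K^{(\rho)}$ iff $f(\cdot/\tau)$ is a $\lambda\tau^{1-\rho}$-eigenfunction of $\K^{(\rho)}_\tau$. Pairing the $\tau$-equation against $f$ on $[0,1]$, swapping the order of integration, and letting $\tau\to1^+$ yields the identity $|f(1)|^2=1-\rho$ for every \emph{normalized} eigenfunction $f$ (continuity on $[0,1]$ being used here). Simplicity then follows in two lines: if two orthonormal eigenfunctions $f_1,f_2$ shared an eigenvalue, after adjusting phases so $f_1(1)=f_2(1)=\sqrt{1-\rho}$, the normalized combination $(f_1+f_2)/\sqrt{2}$ would satisfy $|f(1)|=\sqrt{2(1-\rho)}$, contradicting the identity. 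This bypasses entirely the fractional-calculus machinery you were proposing.
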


We note that $\K^{(\rho)}$ is self-adjoint, so for any non-zero eigenvalue $\lambda$, its algebraic multiplicity equals to its geometric multiplicity, which is defined as $\dim \ker (\lambda I-\K^{(\rho)})$. For more information about algebraic multiplicity, see \cite{lopez2007algebraic}. So here to say that a non-zero eigenvalue $\lambda$ is simple, means
$$\dim \ker \left(\lambda I-\K^{(\rho)}\right)=1\,.$$

In the next proposition, we provide a quantitative description of the  eigenvectors associated with $\lambda_j(T_n)$ for any fixed $j$.

\begin{prop} \label{prop:asympt_eigenvect} Let $T_n$ satisfy \eqref{eq:LM_gamma}. For any $j\ge 1$, let $f_j$ be the normalized eigenfunction of $\K^{(\rho)}$ associated with $\lambda_j(\K^{(\rho)})$, and $u_j=(u_{j,1},\dots,u_{j,n})^\tran$ be a normalized eigenvector of $T_n$ associated with $\lambda_j(T_n)$. Then, up to a change of sign, we have
\begin{equation}\lim_{n\to\infty}\sup_{1\le k\le n}\left\{\left|\sqrt{n}u_{j,k}-f_j\left(\frac{k}{n}\right)\right|\right\} =0 \,. \label{eq:th:asympt_eig_vec}
\end{equation}
\end{prop}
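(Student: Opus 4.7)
The plan is to pass from the discrete eigenvector problem for $T_n$ to the continuous eigenfunction problem for $\K^{(\rho)}$ via a natural isometric embedding, exploit the simplicity of $\lambda_j(\K^{(\rho)})$ to obtain $L^2$-convergence, and then upgrade to uniform convergence using the eigen-relations themselves.

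First I would introduce the isometric embedding $\iota_n:\R^n\to L^2(0,1)$ defined by $(\iota_n u)(x)=\sqrt{n}\,u_{\lceil nx\rceil}$, and set $\tilde u_j=\iota_n u_j$. Through $\iota_n$, the operator $T_n/\alpha_n$ with $\alpha_n=n^{1-\rho}L_1(n)$ becomes the integral operator $\K_n$ on $L^2(0,1)$ with kernel
$$K_n(x,y)=\frac{n\gamma(\lceil nx\rceil-\lceil ny\rceil)}{\alpha_n}=\frac{L_1(|\lceil nx\rceil-\lceil ny\rceil|)}{L_1(n)}\cdot\frac{n^\rho}{(1+|\lceil nx\rceil-\lceil ny\rceil|)^\rho}.$$
Potter's bounds on the slowly varying function $L_1$ together with $|\lceil nx\rceil-\lceil ny\rceil|=n|x-y|+O(1)$ yield pointwise convergence $K_n(x,y)\to|x-y|^{-\rho}$ for $x\ne y$, with a dominating envelope integrable in $y$ uniformly in $x$; the contribution from the diagonal cell $\lceil nx\rceil=\lceil ny\rceil$ is negligible since $\rho<1$. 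This suffices to show $\K_n\to\K^{(\rho)}$ in operator norm on $L^2(0,1)$ via a Schur-type estimate.

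Since $\K_n\tilde u_j=(\lambda_j(T_n)/\alpha_n)\tilde u_j$ and $\lambda_j(T_n)/\alpha_n\to\lambda_j(\K^{(\rho)})$ by \eqref{eq:Toep_asy_eval}, and since the limit eigenvalue is simple by Proposition~\ref{prop:simplicity_eigen}, a standard Riesz-projector argument applied to a small disk around $\lambda_j(\K^{(\rho)})$ gives that $\tilde u_j$ converges in $L^2(0,1)$ to a unit eigenvector of $\K^{(\rho)}$ associated with $\lambda_j(\K^{(\rho)})$, which is $\pm f_j$. Changing sign if needed, we obtain $\|\tilde u_j-f_j\|_{L^2(0,1)}\to 0$.

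To upgrade to uniform convergence, I would compare the two eigen-relations
$$\sqrt{n}\,u_{j,k}=\frac{\alpha_n}{\lambda_j(T_n)}\int_0^1 K_n(k/n,y)\tilde u_j(y)\dd y,\qquad f_j(k/n)=\frac{1}{\lambda_j(\K^{(\rho)})}\int_0^1 \frac{f_j(y)}{|k/n-y|^\rho}\dd y,$$
and bound their difference by two terms: (i) $\int|K_n(k/n,y)||\tilde u_j(y)-f_j(y)|\dd y$, handled by Hölder since $y\mapsto|k/n-y|^{-\rho}$ lies in $L^p(0,1)$ for $1\le p<1/\rho$, uniformly in $k$, and $K_n(k/n,\cdot)$ is uniformly bounded in the same $L^p$; (ii) $\int|K_n(k/n,y)-|k/n-y|^{-\rho}||f_j(y)|\dd y$, controlled uniformly in $k$ using Potter bounds, together with the boundedness and continuity of $f_j$ guaranteed by Proposition~\ref{prop:simplicity_eigen} to absorb the integrable singularity near $y=k/n$. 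The main obstacle is to make estimate (ii) genuinely uniform in $k$, as the discrete kernel and the singular continuous kernel must be matched on a neighborhood of the diagonal whose size is of order $1/n$, requiring precise slowly-varying-function estimates rather than the weaker bounds sufficient for the $L^2$ step.
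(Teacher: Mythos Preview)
Your outline is sound and close in spirit to the paper's proof, but the upgrade step (i) as written does not cover the full range $\rho\in(0,1)$. You invoke H\"older with the kernel in $L^p$ for $p<1/\rho$; when $\rho\ge 1/2$ this forces $p<2$, so the conjugate exponent exceeds $2$, yet from the $L^2$ Riesz-projector argument you only have $\|\tilde u_j-f_j\|_{L^2}\to 0$, not $L^{p'}$ convergence for any $p'>2$. This can be repaired by bootstrapping (using the smoothing of $\K^{(\rho)}$ to push $L^2$ convergence to higher $L^q$ and iterating until $q=\infty$), but that is a genuine step you have not supplied.

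The paper sidesteps this difficulty by working in $L^\infty$ from the start. Its preparatory input (Lemma~5.4 of \cite{merlevede2019unbounded}) in fact gives $\|\K_n^{(\gamma)}-\K^{(\rho)}\|_p\to 0$ for every $p\in[1,\infty]$, in particular $p=\infty$; your Schur-type estimate would deliver this too, since the Schur bound controls the $L^\infty\to L^\infty$ norm just as well as the $L^2\to L^2$ norm. The paper then runs the Riesz-projector argument directly in the $L^\infty$ operator norm: with $P_n$ and $P$ the spectral projections defined by contour integrals around $\lambda_j(\K^{(\rho)})$, a resolvent-series estimate gives $\|P_n-P\|_\infty\to 0$, whence $\|P_nf_j-f_j\|_\infty\to 0$ because $f_j\in L^\infty$ by Proposition~\ref{prop:simplicity_eigen}. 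The direction is also reversed from yours: rather than embedding $u_j$ into $L^2(0,1)$ and showing it converges to $f_j$, the paper projects $f_j$ onto the eigenspace of $\K_n^{(\gamma)}$ via $P_n$. Since the range of $\K_n^{(\gamma)}$ consists of step functions constant on the intervals $[(k-1)/n,k/n)$, $P_nf_j$ is automatically such a step function, and after $L^2$-normalization its step-values are exactly $\sqrt{n}\,u_{j,k}$ up to sign. This choice of direction eliminates the upgrade step entirely and makes the identification between the operator eigenfunction and the matrix eigenvector transparent.
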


From this proposition we deduce the delocalization of eigenvector $u_j$ associated with $\lambda_j(T_n)$ for any fixed $j\ge 1$. Indeed by \eqref{eq:th:asympt_eig_vec}, for large enough $n$, we have
$$\|u_j\|_\infty\le \frac{1+\|f_j\|_\infty}{\sqrt{n}}\,,$$
and because $f_j$ is continuous on $[0,1]$, we have $\|f_j\|_\infty<\infty$. Thus we conclude that 
$$\|u_j\|_\infty\to 0\,.$$

The above propositions also applies to some Toeplitz matrices satisfying \eqref{eq:LM_f}. It is well known that if $T_n$ satisfies \eqref{eq:LM_f} with quasi-monotonic $L_2$ (see Section~2.2.5 of \cite{pipiras2017long} for definition), then $T_n$ also satisfies \eqref{eq:LM_gamma} with
\begin{equation}
    L_1(h)\sim L_2(h)2\Gamma(\rho)\sin\left(\frac{\rho\pi}{2}\right)\quad \text{as } h\to\infty\,, \label{eq:relation_L1L2}
\end{equation}
where $\Gamma$ is the Gamma-function. In particular, if $L_2\equiv 1$, then $L_1(h)$ tends to a constant as $h\to\infty$, and \eqref{eq:Toep_asy_eval}, Proposition~\ref{prop:asympt_eigenvect} hold. 

Without the condition of quasi-monotonicity on $L_1$ or $L_2$, the conditions \eqref{eq:LM_gamma} and \eqref{eq:LM_f} are not equivalent. See \cite{gubner2005theorems} for counterexamples in both directions. However, thanks to the following theorem, the above results can be extended to $T_n$ defined by \eqref{eq:LM_f} with general slowly varying function $L_2$. 

\begin{Th}\label{th:T_Tprime} Let $T_n$ and $T'_n$ be $n\times n$ Toeplitz matrices both satisfying \eqref{eq:LM_f} with the same $\rho\in(0,1)$, with an arbitrary slowly varying function $L_2$ for $T_n$, and with $L_2'\equiv 1$ for $T'_n$. Then  
\begin{equation}
    \left\|\frac{T_n}{n^{1-\rho}L_2(n)}-\frac{T'_n}{n^{1-\rho}}\right\|\xrightarrow[n\to\infty]{} 0. \label{eq:T_Tprime_norm}
\end{equation}
In consequence, for any fixed $j\ge 1$, we have
\begin{equation}
    \lim_{n\to\infty}\frac{\lambda_j(T_n)}{n^{1-\rho}L_2(n)} = 2\Gamma(\rho)\sin\left(\frac{\rho\pi}{2}\right)\lambda_j(\K^{(\rho)})>0\,. \label{eq:Toep_f_asy_eval}
\end{equation}
Moreover, if $u_j$ (resp. $u_j'$) is the eigenvector of $T_n$ (resp. $T'_n$) associated with the $j$th largest eigenvalue, then
\begin{equation}
    \|u_j-u_j'\|_\infty\le \|u_j-u_j'\|\xrightarrow[n\to\infty]{} 0\,. \label{eq:T_Tprime_evect}
\end{equation}
\end{Th}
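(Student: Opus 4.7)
The plan is to prove the operator-norm bound \eqref{eq:T_Tprime_norm} first, and then derive the eigenvalue and eigenvector statements by standard perturbation arguments. Write $D_n := T_n/(n^{1-\rho}L_2(n)) - T_n'/n^{1-\rho}$. Then $D_n$ is a Hermitian Toeplitz matrix with symbol
\[
\psi_n(\xi) = \frac{|\xi|^{\rho-1}}{n^{1-\rho}}\left[\frac{L_2(|\xi|^{-1})}{L_2(n)} - 1\right],
\]
so that $\|D_n\|=\sup_{\|v\|=1}|\langle D_n v,v\rangle|$ with $\langle D_n v,v\rangle = (2\pi)^{-1}\int_{-\pi}^{\pi}\psi_n(\xi)|P_v(e^{\im\xi})|^2 \dd\xi$ and $P_v(z)=\sum_{k=1}^n v_k z^k$. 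Using that $\psi_n$ is real and $|P_v|^2\ge 0$, one has $|\langle D_n v,v\rangle|\le\langle T_n(|\psi_n|)v,v\rangle$, hence $\|D_n\|\le\|T_n(|\psi_n|)\|$, reducing the task to estimating the norm of a positive Toeplitz matrix.

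I would partition $[-\pi,\pi]$ via the scales $1/n$ and a small fixed $\eta>0$, introducing a large parameter $A>1$, into four pieces: the bulk $\eta\le|\xi|\le\pi$; the middle-scale annulus $|\xi|\in[1/(An), A/n]$; the outer shell $|\xi|\in[A/n,\eta]$; and the inner shell $|\xi|\le 1/(An)$. On the bulk, $L_2(|\xi|^{-1})$ is uniformly bounded (since $|\xi|^{-1}\in[1/\pi,1/\eta]$), so $\|\psi_n\|_{L^\infty}=o(1)$ thanks to the Karamata property $n^{1-\rho}L_2(n)\to\infty$, and the standard bound $\|T_n(\psi)\|\le\|\psi\|_{L^\infty}$ suffices. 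On the middle-scale annulus, $\lambda:=1/(n|\xi|)$ stays in the compact set $[1/A,A]$, so the uniform convergence theorem for slowly varying functions yields $|L_2(|\xi|^{-1})/L_2(n)-1|<\varepsilon$ for $n$ large enough, producing a contribution $\le\varepsilon\|T_n'\|/n^{1-\rho}=O(\varepsilon)$ via \eqref{eq:Toep_asy_eval} applied to $T_n'$. On the outer shell, Potter's bound $L_2(|\xi|^{-1})/L_2(n)\le C(n|\xi|)^\alpha$ for an arbitrarily small $\alpha<\min(\rho,1-\rho)$ (valid for $n$ large) yields a positive majorant $C n^{\alpha+\rho-1}|\xi|^{\rho+\alpha-1}$ whose maximum on the shell is at $|\xi|=A/n$, giving $\|\text{majorant}\|_{L^\infty}\le C A^{-(1-\rho-\alpha)}\to 0$ as $A\to\infty$. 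On the inner shell, Potter's bound $L_2(|\xi|^{-1})/L_2(n)\le C(n|\xi|)^{-\alpha}$ yields a positive majorant whose $L^1$ norm is $O(A^{-(\rho-\alpha)}/n)$; the trace bound $\|T_n(\varphi)\|\le (n/(2\pi))\|\varphi\|_{L^1}$ for positive $\varphi$ then delivers $O(A^{-(\rho-\alpha)})\to 0$ as $A\to\infty$. Taking limits in the order $n\to\infty$, then $A\to\infty$, then $\varepsilon\to 0$ forces $\|D_n\|\to 0$.

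For \eqref{eq:Toep_f_asy_eval}, note that the Fourier coefficients of $|\xi|^{\rho-1}$ satisfy $\gamma'(h)\sim 2\Gamma(\rho)\sin(\rho\pi/2)|h|^{-\rho}$ by a classical direct evaluation, so $T_n'$ fits \eqref{eq:LM_gamma} with $L_1$ tending to the constant $2\Gamma(\rho)\sin(\rho\pi/2)$. Then \eqref{eq:Toep_asy_eval} gives $\lambda_j(T_n')/n^{1-\rho}\to 2\Gamma(\rho)\sin(\rho\pi/2)\lambda_j(\K^{(\rho)})$, and Weyl's inequality combined with \eqref{eq:T_Tprime_norm} delivers the corresponding asymptotic for $T_n$. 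For \eqref{eq:T_Tprime_evect}, the spectral gap \eqref{eq:spectral_gap_j} applied to $T_n'$ (permitted because $T_n'$ satisfies \eqref{eq:LM_gamma}) ensures that $\lambda_j(T_n'/n^{1-\rho})$ is separated from the rest of the spectrum by a gap bounded below uniformly in $n$, so a Davis--Kahan $\sin\theta$ estimate gives $\|u_j-u_j'\|\le C\|D_n\|/\mathrm{gap}\to 0$; the $\ell^\infty$ bound is immediate from $\|\cdot\|_\infty\le\|\cdot\|$.

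The main obstacle is the treatment of the two singular shells. Outside the compact-$\lambda$ window of uniform convergence, one has only Potter's polynomial bound, which is much weaker than a ``close to $1$'' statement. The key is to exploit two different Toeplitz-norm bounds according to where the majorant concentrates: an $L^\infty$ estimate on the outer shell (where the majorant peaks at the inner edge $A/n$ and is small there by the factor $A^{-(1-\rho-\alpha)}$) and an $L^1$/trace estimate on the inner shell (where the majorant is supported on an interval of length $O(1/n)$ so that $n\|\cdot\|_{L^1}$ is controlled). Balancing the parameters $\alpha,A,\eta$ and choosing the order of limits then forces the overall operator norm to vanish.
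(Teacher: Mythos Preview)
Your argument is correct and in fact somewhat cleaner than the paper's. Both proofs rest on the same three analytic ingredients --- Potter's bounds on the tails, the uniform convergence theorem on the compact window $n|\xi|\in[A^{-1},A]$, and the local boundedness of $L_2$ on the bulk --- and both conclude the eigenvalue and eigenvector statements by the same perturbation reasoning (Weyl for eigenvalues; spectral projections/Davis--Kahan for eigenvectors, using the simplicity of $\lambda_j(\K^{(\rho)})$). The genuine difference lies in how the Toeplitz norm is controlled. The paper bounds $\|D_n\|$ by the $L^\infty$ norm of the truncated Fourier series of the symbol, i.e.\ by $\sup_x\bigl|(\psi_n\ast D_n)(x)\bigr|$ with $D_n$ the Dirichlet kernel, and then estimates this convolution after a change of variables and a further three-zone split, invoking Young's convolution inequality with carefully chosen exponents to handle each zone. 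You instead dominate $D_n$ by the positive Toeplitz matrix $T_n(|\psi_n|)$, split $|\psi_n|$ directly into four nonnegative pieces, and apply to each piece either the elementary bound $\|T_n(\varphi)\|\le\|\varphi\|_{L^\infty}$ or the trace bound $\|T_n(\varphi)\|\le\tr T_n(\varphi)=(n/2\pi)\|\varphi\|_{L^1}$. This sidesteps the Dirichlet kernel and the convolution-inequality bookkeeping entirely; the price is that you need the additional observation $\|T_n(\psi)\|\le\|T_n(|\psi|)\|$ for real $\psi$, which is straightforward from the quadratic-form representation. One small point worth making explicit in your write-up: the choice of $\eta$ must be made first (small enough that Potter's bound is valid for all $x\ge 1/\eta$), before $A$ and $\varepsilon$ are introduced, and the eigenvector conclusion holds only up to a choice of sign.
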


Let $\tilde T_n:=T_n/\|T_n\|$. The following proposition provides the decay of moments of the ESD $\mu^{\tilde T_n}$ for $T_n$ satisfying \eqref{eq:LM_gamma} or \eqref{eq:LM_f}. This result shows that the $T_n$ with parameter $\rho\in(0,3/4)$ satisfies \ref{ass:contr_trace_1} or \ref{ass:contr_trace_2} below, and is needed in the proof of Theorem~\ref{th:joint_fluct}. 

\begin{prop}\label{prop:decay_moments} Let $T_n$ be $n\times n$ Toeplitz matrix satisfying \eqref{eq:LM_gamma} or \eqref{eq:LM_f}, and let $\tilde T_n=T_n/\|T_n\|$.
\begin{enumerate}
    \item \label{enum:th:tech_assumpt_1} If $\rho \in(0,1/2)$, then 
    \begin{equation}
        \int x\dd\mu^{\tilde T_n}(x)=\frac{\tr \tilde T_n}{n}=o(1/\sqrt{n})\,. \label{eq:enum:th:tech_assumpt_1}
    \end{equation}
    \item\label{enum:th:tech_assumpt_2} If $\rho\in[1/2,3/4)$, then
    \begin{equation}
        \int x^2\dd\mu^{\tilde T_n}(x)=\frac{\tr \tilde T_n^2}{n}=o(1/\sqrt{n})\,. \label{eq:enum:th:tech_assumpt_2}
    \end{equation}
\end{enumerate}
\end{prop}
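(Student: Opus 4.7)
The plan is to estimate $\tr\tilde T_n$ and $\tr\tilde T_n^2$ directly from the decay of the covariance sequence $\gamma$ or from the behaviour of the spectral density $\varphi$ near the origin, using the sharp asymptotic $\|T_n\|\sim c\,n^{1-\rho}L(n)$ from \eqref{eq:Toep_asy_eval} or \eqref{eq:Toep_f_asy_eval} (with $L=L_1$ under \eqref{eq:LM_gamma}, $L=L_2$ under \eqref{eq:LM_f}).

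For item~\ref{enum:th:tech_assumpt_1}, the identity $\tr\tilde T_n/n=\gamma(0)/\|T_n\|$ together with the finiteness of $\gamma(0)$ (which under \eqref{eq:LM_f} follows from the integrability of the singularity $|x|^{-(1-\rho)}$ when $\rho>0$) immediately gives $\sqrt n\cdot \tr\tilde T_n/n=O(n^{\rho-1/2}/L(n))\to 0$ whenever $\rho<1/2$.

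For item~\ref{enum:th:tech_assumpt_2}, I would start from
\[
\tr T_n^2=\sum_{|k|<n}(n-|k|)|\gamma(k)|^2\le n\sum_{|k|<n}|\gamma(k)|^2,
\]
which combined with $\|T_n\|^2\sim c^2 n^{2(1-\rho)}L(n)^2$ reduces the task to showing $\sum_{|k|<n}|\gamma(k)|^2=o\bigl(n^{3/2-2\rho}L(n)^2\bigr)$. When $\rho\in(1/2,3/4)$ the full sum $\sum_{k\in\Z}|\gamma(k)|^2$ is already finite: under \eqref{eq:LM_gamma} because $|\gamma(k)|\le CL_1(|k|)(1+|k|)^{-\rho}$ is square-summable as $2\rho>1$, and under \eqref{eq:LM_f} by Parseval, since $\int\varphi^2<\infty$ when $2(1-\rho)<1$; the hypothesis $\rho<3/4$ then closes the estimate. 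When $\rho=1/2$ under \eqref{eq:LM_gamma}, Karamata's theorem gives $\sum_{k=1}^{n-1}L_1(k)^2/k=L^*(n)$ for some slowly varying $L^*$, and $L^*(n)/(\sqrt n\, L_1(n)^2)\to 0$ is automatic.

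The main obstacle is \eqref{eq:LM_f} at the critical exponent $\rho=1/2$: there $\varphi\notin L^2$, Parseval is unavailable and the full Fourier sum diverges. My plan to overcome this is to split $\varphi=\varphi_1+\varphi_2$ with $\varphi_1:=\varphi\ind_{\{|x|>1/n\}}$, so that $T_n=T_n(\varphi_1)+T_n(\varphi_2)$. On the bulk piece, Parseval together with a Karamata estimate of $\int_{1/n}^{\pi}L_2(1/x)^2 x^{-1}\dd x$ give $\tr T_n(\varphi_1)^2\le 2\pi n\|\varphi_1\|_2^2=O(n\tilde L(n))$ for some slowly varying $\tilde L$. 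On the singular piece, the positivity $\varphi_2\ge 0$ makes $T_n(\varphi_2)$ positive semi-definite, so $\tr T_n(\varphi_2)^2\le(\tr T_n(\varphi_2))^2=(n\|\varphi_2\|_1)^2$; a Karamata estimate of $\int_0^{1/n}L_2(1/x)x^{-1/2}\dd x$ gives $\|\varphi_2\|_1=O(L_2(n)/\sqrt n)$, whence $\tr T_n(\varphi_2)^2=O(nL_2(n)^2)$. Adding the two bounds and dividing by $\|T_n\|^2\sim c^2 n L_2(n)^2$ produces $\tr\tilde T_n^2=O(\tilde L(n)/L_2(n)^2)+O(1)$, a slowly varying function of $n$; this is $o(\sqrt n)$, which is exactly $\tr\tilde T_n^2/n=o(1/\sqrt n)$.
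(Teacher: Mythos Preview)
Your argument is correct. Item~\ref{enum:th:tech_assumpt_1} is handled exactly as in the paper. For item~\ref{enum:th:tech_assumpt_2} your approach diverges from the paper's in two places, and the comparison is instructive.

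Under \eqref{eq:LM_gamma} the paper does not split into $\rho=1/2$ versus $\rho>1/2$. Instead it picks $\epsilon>0$ with $2-2\rho>1/2+\epsilon$ and writes
\[
\frac{\tr T_n^2}{\sqrt{n}\,\|T_n\|^2}\ll\frac{1}{n^{\epsilon/2}}\Bigl(\gamma(0)^2+\sum_{k\ge 1}\frac{|\gamma(k)|^2}{k^{\epsilon/2}}\Bigr),
\]
where the series converges because $|\gamma(k)|^2\lesssim L_1^2(k)k^{-1}$ for $\rho\ge 1/2$. This $\epsilon$-room trick absorbs the borderline case $\rho=1/2$ without invoking the de~Haan--Karamata fact that $\sum_{k\le n}L_1(k)^2/k$ is slowly varying. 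Your version is equally valid and arguably more transparent about where the critical exponent sits.

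Under \eqref{eq:LM_f} the difference is more substantial. The paper avoids Parseval entirely: it writes $\tr T_n^2=\frac{n}{4\pi^2}\iint \varphi(x)\varphi(y)F_n(x-y)\dd x\dd y$ with $F_n$ the F\'ejer kernel, and applies Young's inequality with exponents $2/p+1/r=2$, $1<r<2/(4\rho-1)$. Since $\varphi\in L^p$ for such $p<2$ and $\|F_n\|_r=O(n^{1-1/r})$, one gets $\tr\tilde T_n^2/\sqrt{n}=O(n^{-(1/2+1/r-2\rho)}L_2(n)^{-2})\to 0$ for \emph{every} $\rho\in[1/2,3/4)$ at once, with no special treatment of $\rho=1/2$. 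Your route---Parseval for $\rho>1/2$, then the decomposition $\varphi=\varphi\ind_{|x|>1/n}+\varphi\ind_{|x|\le 1/n}$ at $\rho=1/2$, controlling the singular piece via $\tr A^2\le(\tr A)^2$ for $A\ge 0$---is more elementary and gives a sharper picture of why $\rho=1/2$ is borderline, at the cost of a case split. Both arguments yield the same conclusion.
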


\subsection{Convergence of largest eigenvalues of separable sample covariance matrix}
\label{subsec:convergence_eigenvalue}
For $N,n\in\N$, let
\begin{equation}
    S_N(\Gamma):=\frac{1}{N}C_N^{1/2}Z\Gamma_nZ^*C_N^{1/2}, \label{eq:def_S_N_Gamma}
\end{equation}
where $C_N, \Gamma_n$ are $N\times N$ and $n\times n$ deterministic positive semi-definite Hermitian matrices, and $Z$ is a $N\times n$ matrix having i.i.d. entries $Z_{i,j}$. Let
$$c_1\ge c_2 \ge \dots \ge c_N \quad\text{and}\quad t_1\ge t_2 \ge \dots \ge t_n$$
be the eigenvalues of $C_N$ and $\Gamma_n$ respectively. Let $m\ge 1$ be a fixed integer. We assume that the following assumptions hold:
\begin{enumerate}[leftmargin=*, label={\bf A\arabic{enumi}}]
    \item \label{ass:entries_cdts}The entries $Z_{i,j}$ satisfy
    $$\E Z_{i,j}=0, \qquad \E |Z_{i,j}|^2=1 \qquad\text{and}\quad \E|Z_{i,j}|^4<\infty\,,$$
    \item \label{ass:C_N}The spectral norm $C_N$ is bounded in $N$, and the ESD $\mu^{C_N}$ of $C_N$ converges weakly as $N\to\infty$, to a probability measure $\nu_C\ne \delta_0$. 
    \item \label{ass:eigenvalue_conv_Gamma}There exists a decreasing sequence of positive numbers
    $$a_1 \ge a_2 \ge \cdots$$
    converging to $0$ such that for any $j\ge 1$, we have
    $$\lim_{n\to\infty}t_j=a_j\,.$$
\end{enumerate}
Note that under \ref{ass:eigenvalue_conv_Gamma}, we have $\mu^{\Gamma_n}\cvweak \delta_0$, and for any $\varepsilon>0$,
$$\sup_{n}\#\{j\tq t_j>\varepsilon\}<\infty.$$

For further use, we will prove a concentration inequality for the largest eigenvalues of $S_N(\Gamma)$ with the following conditions.
\begin{enumerate}[leftmargin=*, label={\bf A\arabic{enumi}}]
\setcounter{enumi}{3}
    \item \label{ass:C_diagonal} The matrices $C_N$ are diagonal:
    $$C_N=\diag(c_1,\dots,c_N).$$
    \item \label{ass:Gamma_diagonal} The matrices $\Gamma_n$ are diagonal:
    $$\Gamma_n=\diag(t_1,\dots,t_n).$$
    \item \label{ass:bound_cdt}(Bound condition) There exists a sequence of positive numbers $\varepsilon_n\to 0$ such that almost surely for large enough $N,n$, 
    $$|Z_{i,j}|\le \varepsilon_n\sqrt{n}\,,$$
\end{enumerate}

\begin{Rq} We take two examples for which the bound condition \ref{ass:bound_cdt} holds. The first case is where $\E|Z_{i,j}|^{6+\epsilon}<\infty$ for some $\epsilon>0$. In this case, we have
$$\sum_{n=1}^\infty Nn\P(|Z_{i,j}|>\varepsilon_n\sqrt{n})\le \sum_{n=1}^\infty \frac{K}{\varepsilon_n^{6+\epsilon}n^{1+\epsilon/2}}<\infty,$$
where we have assumed that the convergence rate of $\varepsilon_n$ to $0$ is slower than any preassigned rate. Then by Borel-Cantelli's Lemma, the bound condition holds. The second case is where $\E|Z_{i,j}|^4<\infty$, and $Z_{i,j}$ does not depend on $N,n$ for any fixed $i,j$. In other words, $Z_{i,j}$ are all from an infinite double array $(Z_{i,j})_{i,j\ge 1}$. In this case, by the truncation lemma 2.2 of \cite{yin1988limit}, the bound condition holds.
\end{Rq}

Recall that we use $N,n\to\infty$ to denote $N,n\to\infty, N/n\to r\in\positR$.

\begin{prop}\label{prop:conv_eigen_SN} Let $S_N(\Gamma)$ be defined as \eqref{eq:def_S_N_Gamma}. Under \ref{ass:entries_cdts}, \ref{ass:C_N} and \ref{ass:eigenvalue_conv_Gamma}, for any $j\ge 1$, we have
$$\lambda_j(S_N(\Gamma)) \xrightarrow[N,n\to\infty]{\mathcal P} a_j\int x\dd\nu_C(x).$$
Moreover, if $Z_{i,j}$'s are real or complex Gaussian, or if \ref{ass:C_diagonal}, \ref{ass:Gamma_diagonal} and \ref{ass:bound_cdt} hold, then the above convergence holds almost surely.
\end{prop}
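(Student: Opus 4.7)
The plan is to decompose $\Gamma_n$ into a rank-$K$ spike part containing its top $K$ eigenvalues and a bulk remainder, combine the two contributions via Weyl's inequality, and finally let $K\to\infty$. Fix $K\ge j$ and let $e_1,\dots,e_n$ be orthonormal eigenvectors of $\Gamma_n$ with $\Gamma_n e_k = t_k e_k$. Set $\Gamma_n^{\le K}:=\sum_{k\le K}t_k e_k e_k^*$ and $\Gamma_n^{>K}:=\Gamma_n-\Gamma_n^{\le K}$, so that $S_N(\Gamma)=S_N(\Gamma^{\le K})+S_N(\Gamma^{>K})$. Weyl's inequality gives
\[
\bigl|\lambda_j(S_N(\Gamma))-\lambda_j(S_N(\Gamma^{\le K}))\bigr|\le \|S_N(\Gamma^{>K})\|,
\]
so the proof reduces to (a) identifying the limit of $\lambda_j(S_N(\Gamma^{\le K}))$ for fixed $K$, and (b) showing $\|S_N(\Gamma^{>K})\|$ is small when $K$ is large.

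For (a), since $\Gamma_n^{\le K}$ has rank at most $K$, the nonzero eigenvalues of $S_N(\Gamma^{\le K})$ coincide with those of the $K\times K$ matrix
\[
M_N:=\frac{1}{N}D_K^{1/2}(ZE_K)^*C_N(ZE_K)D_K^{1/2},\qquad E_K:=[e_1|\cdots|e_K],\quad D_K:=\diag(t_1,\dots,t_K).
\]
Each column $Z e_k$ has entries that are i.i.d.\ across rows with mean $0$, variance $1$ and uniformly bounded fourth moment, while orthogonality $e_k^*e_{k'}=0$ kills the off-diagonal expectations. A quadratic form concentration of the type $\E|x^*Ax-\tr A|^2\le C\|A\|_F^2$ (valid under \ref{ass:entries_cdts}) then gives
\[
\frac{1}{N}(ZE_K)^*C_N(ZE_K)\cvprob\Bigl(\int x\dd\nu_C(x)\Bigr)I_K,
\]
so with $t_k\to a_k$ from \ref{ass:eigenvalue_conv_Gamma} one obtains $\lambda_k(M_N)\cvprob a_k\int x\dd\nu_C(x)$ for $1\le k\le K$.

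For (b), since $\Gamma_n^{>K}\preceq t_{K+1} I_n$ as positive semi-definite matrices,
\[
S_N(\Gamma^{>K})\preceq t_{K+1}\|C_N\|\cdot\frac{1}{N}ZZ^*,
\]
hence $\|S_N(\Gamma^{>K})\|\le t_{K+1}\|C_N\|\cdot\|\tfrac{1}{N}ZZ^*\|$. Under the finite fourth moment of \ref{ass:entries_cdts}, the Bai-Yin theorem yields $\|\tfrac{1}{N}ZZ^*\|\to(1+1/\sqrt r)^2$ almost surely, so the right-hand side is $O(t_{K+1})$. Combining with (a) via Weyl, letting first $N,n\to\infty$ and then $K\to\infty$, and using $t_{K+1}\to a_{K+1}\to 0$, one concludes $\lambda_j(S_N(\Gamma))\cvprob a_j\int x\dd\nu_C(x)$.

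To upgrade to almost sure convergence, the estimate in (b) is already a.s.\ under \ref{ass:entries_cdts}, so only (a) needs strengthening. In the Gaussian case, the joint unitary invariance of $Z$ lets us simultaneously diagonalize $C_N$ and $\Gamma_n$; then $ZE_K$ is a block of independent standard Gaussians and the quadratic forms defining $M_N$ become classical sums of i.i.d.\ random variables with exponential tails, so the strong law plus Borel-Cantelli finish the job. Under the diagonal and truncation setting \ref{ass:C_diagonal}-\ref{ass:bound_cdt}, the same sums are of independent bounded summands, and Bennett's inequality supplies exponential concentration to the same effect. The main technical point lies in step (a) in the fully general (non-diagonal, non-Gaussian) case: the off-diagonal entries $\frac{1}{N}(Ze_k)^*C_N(Ze_{k'})$ with $k\neq k'$ must be shown to concentrate at $0$, which requires decomposing the sum by row indices, invoking both $e_k^*e_{k'}=0$ and $\E Z_{ij}=0$ to zero out the mean, and then controlling the variance with only the fourth moment in hand, after which Chebyshev delivers the convergence in probability.
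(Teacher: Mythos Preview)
Your argument for convergence in probability is essentially the same as the paper's: truncate $\Gamma_n$ to finite rank, control the remainder via $\|\tfrac1N ZZ^*\|$, and reduce to the entrywise convergence of a fixed-size matrix by a variance computation. Your Bennett-inequality argument under \ref{ass:C_diagonal}--\ref{ass:bound_cdt} is likewise the paper's.

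The gap is in the Gaussian almost-sure step. You write that ``the joint unitary invariance of $Z$ lets us simultaneously diagonalize $C_N$ and $\Gamma_n$''. This is only valid when $Z$ has i.i.d.\ \emph{complex} standard Gaussian entries (bi-unitary invariance), or when $Z$ is real Gaussian \emph{and} both $C_N,\Gamma_n$ are real symmetric (bi-orthogonal invariance). The proposition, however, covers real Gaussian $Z$ with general Hermitian $C_N,\Gamma_n$, and a real Gaussian matrix is not invariant under multiplication by a complex unitary, so you cannot diagonalize in that case. The paper circumvents this entirely: it shows that $Z\mapsto \sqrt{\lambda_j(S_N(\Gamma))}$ is $K/\sqrt N$-Lipschitz in the Frobenius norm (via the Wielandt--Hoffman inequality for singular values) and then applies the Gaussian concentration inequality for Lipschitz functions to get
\[
\P\Bigl(\bigl|\sqrt{\lambda_j(S_N(\Gamma))}-\E\sqrt{\lambda_j(S_N(\Gamma))}\bigr|>\epsilon\Bigr)\le 2e^{-KN\epsilon^2},
\]
which is summable; Borel--Cantelli together with the already-established convergence in probability then gives almost-sure convergence. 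This route works uniformly for real or complex Gaussian entries and arbitrary Hermitian $C_N,\Gamma_n$, without any diagonalization. If you restrict your statement to complex Gaussian $Z$, or to real Gaussian $Z$ with real $C_N,\Gamma_n$, your approach would also go through.
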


\begin{Rq}
The almost sure convergence under \ref{ass:C_diagonal}, \ref{ass:Gamma_diagonal} and \ref{ass:bound_cdt} is in fact a byproduct of Lemma~\ref{lemma:high_proba_concentration} which is needed in the proof of CLT \ref{th:diag_clt}. However this does not allow to conclude the a.s. convergence when $Z_{i,j}$'s are Gaussian. Indeed if the entries of $Z$ are i.i.d real Gaussian variables, and if $C_N$ or $\Gamma_n$ are complex and non-diagonal, then we cannot diagonalize $C_N$ or $\Gamma_n$ because the real Gaussian vectors are not unitary invariant. Thus we will proceed an independent proof for Gaussian case with help of a Gaussian concentration inequality.
\end{Rq}

Applying the above generic result to the special case of $S_N(T/\|T\|)$, we obtain the following result:
\begin{corol} Let $T_n$ be a sequence of Toeplitz matrices satisfying \eqref{eq:LM_gamma} or \eqref{eq:LM_f}. Let $S_N(T)$ be defined as before. Then if \ref{ass:entries_cdts}, \ref{ass:C_N} hold, for any fixed $j\ge 1$ we have
$$\frac{\lambda_j(S_N(T))}{\lambda_j(T_n)}\xrightarrow[N,n\to\infty]{\mathcal P}\int x\dd\nu_C(x).$$
Moreover, if $Z_{i,j}$'s are standard real or complex Gaussian, then the above convergence holds almost surely.
\end{corol}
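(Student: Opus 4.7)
The approach is to reduce the statement to Proposition~\ref{prop:conv_eigen_SN} applied to the normalized Toeplitz matrix $\tilde T_n := T_n/\|T_n\|$. Since $\|T_n\|=\lambda_1(T_n)$ and $S_N(T)=\lambda_1(T_n)\,S_N(\tilde T_n)$ by linearity in the middle matrix, one has the identity
$$
\frac{\lambda_j(S_N(T))}{\lambda_j(T_n)} \;=\; \lambda_j(S_N(\tilde T_n))\cdot \frac{\lambda_1(T_n)}{\lambda_j(T_n)},
$$
which factors into a stochastic piece and a purely deterministic piece; I would treat these separately and then multiply.

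For the deterministic factor, \eqref{eq:Toep_asy_eval} (when $T_n$ satisfies \eqref{eq:LM_gamma}) or \eqref{eq:Toep_f_asy_eval} (when $T_n$ satisfies \eqref{eq:LM_f}, via Theorem~\ref{th:T_Tprime}) gives $\lambda_j(T_n)/\lambda_1(T_n)\to a_j$, where $a_j:=\lambda_j(\K^{(\rho)})/\lambda_1(\K^{(\rho)})$. The sequence $(a_j)_{j\ge 1}$ is positive (since $\K^{(\rho)}$ has infinitely many strictly positive eigenvalues), non-increasing, and tends to $0$ (by compactness of $\K^{(\rho)}$). In particular, $\tilde T_n$ satisfies Assumption~\ref{ass:eigenvalue_conv_Gamma} with the sequence $(a_j)$, while Assumptions~\ref{ass:entries_cdts} and \ref{ass:C_N} are carried over directly from the hypotheses of the corollary.

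Applying Proposition~\ref{prop:conv_eigen_SN} to $S_N(\tilde T_n)$ then yields
$$
\lambda_j(S_N(\tilde T_n)) \cvprob a_j\int x\dd\nu_C(x),
$$
with almost sure convergence in the Gaussian case (Proposition~\ref{prop:conv_eigen_SN} gives the a.s.\ version with Gaussian entries, without requiring $C_N$ or $\tilde T_n$ to be diagonal). Multiplying by the deterministic limit $\lambda_1(T_n)/\lambda_j(T_n)\to 1/a_j$, we obtain
$$
\frac{\lambda_j(S_N(T))}{\lambda_j(T_n)}\cvprob \int x\dd\nu_C(x),
$$
with almost sure convergence when $Z_{i,j}$'s are standard real or complex Gaussian. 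There is no real obstacle here beyond the reduction itself; the technical content has been absorbed into the Toeplitz-matrix asymptotics of Section~\ref{subsec:toeplitz} (which ensure that $\tilde T_n$ satisfies \ref{ass:eigenvalue_conv_Gamma}) and into the generic convergence theorem Proposition~\ref{prop:conv_eigen_SN}.
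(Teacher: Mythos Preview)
Your proposal is correct and follows essentially the same approach as the paper, which simply states ``Applying the above generic result to the special case of $S_N(T/\|T\|)$, we obtain the following result'' before stating the corollary. You have spelled out precisely the details the paper leaves implicit: verifying that $\tilde T_n=T_n/\|T_n\|$ satisfies \ref{ass:eigenvalue_conv_Gamma} via the Toeplitz asymptotics, applying Proposition~\ref{prop:conv_eigen_SN}, and multiplying by the deterministic factor $\lambda_1(T_n)/\lambda_j(T_n)\to 1/a_j$.
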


\subsection{CLT for largest eigenvalues: Diagonal \& Gaussian case}
\label{subsec:CLT_diag}
In this section, we assume that $C_N$, $\Gamma_n$ are diagonal, and study the CLT for largest eigenvalues of $S_N(\Gamma)$. As a corollary, we obtain the result for Gaussian case. 

\begin{enumerate}[leftmargin=*, label={\bf A\arabic{enumi}}]
\setcounter{enumi}{6}
    \item \label{ass:moment_6} The sixth moment of the entries is finite:
    $$\E|Z_{1,1}|^6<\infty.$$
    \item \label{ass:spectral_gap} The $m$ largest eigenvalues of $\Gamma_n$ satisfy the multiple spectral gap property:
    $$\lim_{n\to\infty}t_{j+1}=a_{j+1}<a_j=\lim_{n\to\infty}t_j \quad \text{ for }j=1,\dots,m.$$
\end{enumerate}

For $x,z\in\C$ we define
\begin{equation}
    G(x,z):=\frac{1}{N}\sum_{i=1}^N\frac{c_i}{x-zc_i}\,.
\end{equation}
For $j=1,\dots,m$, let $\theta_j$ be the largest solution of the equation
\begin{equation}
    G\left(\theta_j,\frac{1}{N}\sum_{k\ne j}\frac{t_k}{t_j-t_k}\right)=\frac{1}{N}\sum_{i=1}^N\frac{c_i}{\theta_j-\left(\frac{1}{N}\sum_{k\ne j}\frac{t_k}{t_j-t_k}\right)c_i}=1\,. \label{eq:theta_equation}
\end{equation}

\begin{Rq} \label{rq:on_equation_G} Note that if not all $c_i$'s are $0$, and if $z\in\R$, then from the graph of the function $x\mapsto G(x,z)$, we see that the equation $G(x,z)=1$ on $x$ admits $\#\{zc_i\tq c_i\ne 0,i=1,\dots,n\}$ real solutions. 

Moreover, we prove that the largest solution of \eqref{eq:theta_equation} tends to $\int x\dd\nu_C(x)$. Indeed, we know that under the assumptions \ref{ass:eigenvalue_conv_Gamma} and \ref{ass:spectral_gap}, we have 
$$N^{-1}\sum_{k\ne j}\frac{t_k}{t_j-t_k}\xrightarrow[N,n\to\infty]{} 0\,,$$ and the assumption \ref{ass:C_N} ensures that
$$\frac{1}{N}\sum_{i=1}^N c_i\xrightarrow[N\to\infty]{}\int x\dd\nu_C(x)\ne 0.$$
Also note that for every fixed $\theta\ne 0$,
$$G\left(\theta,\frac{1}{N}\sum_{k\ne j}\frac{t_k}{t_j-t_k}\right)\xrightarrow[N,n\to\infty]{}\theta^{-1}\int x\dd\nu_C(x).$$
Thus for any $0<\epsilon<\int x\dd\nu_C(x)$, let $\theta^{(1)}=\int x\dd\nu_C(x)-\epsilon$, $\theta^{(2)}=\int x\dd\nu_C(x)+\epsilon$, then we can see that asymptotically the largest solution of the equation \eqref{eq:theta_equation} is between $\theta^{(1)}$ and $\theta^{(2)}$.
\end{Rq}

We define
\begin{equation}
    \Lambda_m(\Gamma) := \sqrt{N}\begin{pmatrix}\frac{\lambda_1(S_N(\Gamma))}{\lambda_1(\Gamma)} - \theta_1 & 
        \cdots & 
    \frac{\lambda_m(S_N(\Gamma))}{\lambda_m(\Gamma)} - \theta_m \end{pmatrix}^\tran. \label{eq:def_Lambda_m}
\end{equation}

\begin{Th} \label{th:diag_clt}
Under \ref{ass:entries_cdts}, \ref{ass:C_N}, \ref{ass:eigenvalue_conv_Gamma}, \ref{ass:C_diagonal}, \ref{ass:Gamma_diagonal} and \ref{ass:moment_6}, \ref{ass:spectral_gap}, we have
$$\Lambda_m(\Gamma)\cvweak[N,n\to\infty] \gNr\left(0,(\E|Z_{1,1}|^4-1)\int x^2\dd\nu_C(x) I_m\right)\,.$$
\end{Th}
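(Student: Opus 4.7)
The strategy is to adapt the classical spiked-model CLT template to the separable setting with diagonal population matrices. First decompose $\Gamma_n = \diag(\Gamma^{(s)},\Gamma^{(b)})$ with $\Gamma^{(s)} = \diag(t_1,\ldots,t_m)$, and write $Z=[Z^{(s)}, Z^{(b)}]$ accordingly, giving the rank-$m$ decomposition $S_N(\Gamma) = B_N + XX^*$ where $B_N := S_N(\Gamma^{(b)})$ is the bulk part and $X := N^{-1/2}C_N^{1/2}Z^{(s)}(\Gamma^{(s)})^{1/2}$ is an $N\times m$ matrix. Proposition~\ref{prop:conv_eigen_SN} applied to $B_N$ gives $\|B_N\|\to a_{m+1}\int x\,d\nu_C$, which by \ref{ass:spectral_gap} is strictly less than the limit $a_j\int x\,d\nu_C$ of $\lambda_j(S_N(\Gamma))$ for each $j\le m$. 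Hence each such $\lambda_j$ lies with high probability outside $\spec(B_N)$, and by the matrix determinant lemma it satisfies the master equation $\det(I_m - \Xi(\lambda_j)) = 0$, where
$$\Xi_{jk}(\lambda) = \frac{\sqrt{t_jt_k}}{N}\,Z_j^{*}\,G(\lambda)\,Z_k, \qquad G(\lambda) := C_N^{1/2}(\lambda I - B_N)^{-1} C_N^{1/2}.$$

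Next I would condition on $Z^{(b)}$. Since the columns $Z_1,\ldots,Z_m$ of $Z^{(s)}$ are independent of $Z^{(b)}$ and of each other, standard concentration of quadratic and bilinear forms gives $\Xi_{jj}(\lambda) = \tfrac{t_j}{N}\tr G(\lambda) + O_P(N^{-1/2})$ and, for $j\ne k$, $\Xi_{jk}(\lambda) = O_P(N^{-1/2})$ with zero conditional mean. In the determinant expansion, the off-diagonal entries enter only through products of at least two such factors, hence contribute $O_P(N^{-1})$; combined with \ref{ass:spectral_gap} (which separates the limiting targets $t_j\theta_j$ and supplies a non-vanishing deterministic derivative for an implicit function argument), this reduces the problem to the $m$ scalar equations $\Xi_{jj}(\lambda_j) = 1$. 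Identifying the deterministic equivalent of $\tfrac{t_j}{N}\tr G(\lambda)$ in the regime $\nu_\Gamma = \delta_0$ and matching it with \eqref{eq:theta_equation} then pins down $\theta_j$ as the correct centering of $\lambda_j/t_j$.

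For the Gaussian fluctuation I will apply the CLT for quadratic forms to $\tfrac{t_j}{\sqrt N}(Z_j^{*} G Z_j - \tr G)$ (valid under \ref{ass:moment_6}). The crucial observation is that $\mu^{B_N}\to\delta_0$ implies $G(\lambda) \approx \lambda^{-1}C_N$ in the relevant trace functionals, and $\lambda^{-1}C_N$ is diagonal by \ref{ass:C_diagonal}--\ref{ass:Gamma_diagonal}. Hence $\sum_i |G_{ii}|^2 \sim \tr(GG^*) \sim (t_j\theta_j)^{-2}\tr(C_N^2)$, and the standard variance decomposition for $Z_j^{*} G Z_j$ — with a diagonal contribution weighted by $\E|Z_{1,1}|^4$ and a trace contribution whose real-versus-complex constants satisfy an offset of exactly $1$ — collapses in both cases to $(\E|Z_{1,1}|^4 - 1)(t_j\theta_j)^{-2}\tr(C_N^2)$. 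A short implicit-function-theorem computation applied to $\Xi_{jj}(\lambda_j) = 1$, together with the change of variable $\lambda_j \mapsto \lambda_j/t_j$, absorbs the prefactors into exactly $(\E|Z_{1,1}|^4 - 1)\int x^2\,d\nu_C$.

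Joint asymptotic normality with diagonal covariance is then immediate: conditional on $Z^{(b)}$ the $m$ fluctuations depend on the disjoint independent vectors $Z_1,\ldots,Z_m$, so the multivariate CLT for quadratic forms yields asymptotic independence, and unconditional convergence follows since the conditional limiting variance does not depend on $Z^{(b)}$. The main technical obstacle is producing a deterministic equivalent for $\tfrac{1}{N}\tr G(\lambda)$ with $o(N^{-1/2})$ precision in the somewhat unusual regime where $\Gamma^{(b)}$ has many eigenvalues accumulating at zero together with finitely many $O(1)$ near-spikes $t_{m+1}, t_{m+2},\ldots$ forming the upper edge of the bulk. The diagonality hypotheses \ref{ass:C_diagonal}--\ref{ass:Gamma_diagonal} are essential here — both for the explicit column-by-column conditioning and for the variance collapse just described — while \ref{ass:moment_6} supplies the uniform tail control needed to lift the conditional CLT to an unconditional one.
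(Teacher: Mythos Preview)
Your proposal is correct in outline and shares the same technical core with the paper, but the decomposition you choose is genuinely different from the paper's.

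You remove all $m$ spikes at once and work with the rank-$m$ master equation $\det(I_m-\Xi(\lambda))=0$, then argue that the off-diagonal entries of $\Xi$ contribute only at order $N^{-1}$ so that the problem decouples into $m$ scalar equations. The paper instead removes \emph{one} spike at a time: for each $j$ it sets $\Gamma_{(j)}=\Gamma-t_ju_ju_j^*$ and uses a rank-one perturbation, converting the eigenvalue inequality $\tlambda_j<\eta_j:=\theta_j+b_j/\sqrt{N}$ (a deterministic point) into the quadratic-form inequality $N^{-1}\bz_j^*C^{1/2}(\eta_jI-S_{(j)})^{-1}C^{1/2}\bz_j<1$ via a sign-change argument for the secular function $\Upsilon$. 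The advantage of the paper's route is that $S_{(j)}$ still carries the other $m-1$ spikes, so its deterministic equivalent involves $\sum_{k\ne j}t_k/(t_j-t_k)$ and therefore matches the defining equation \eqref{eq:theta_equation} for $\theta_j$ \emph{exactly}; in your setup the bulk $B_N$ has lost those $m-1$ terms, which you must then argue shift the centering only by $O(N^{-1})$ (true, but an extra step). Your route, on the other hand, is the more standard spiked-model template and makes the joint independence of the $m$ fluctuations completely transparent by conditioning on $Z^{(b)}$.

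After the reduction, both proofs face the same two pieces: a concentration estimate $N^{-1/2}(\bz_j^*Q\bz_j-\tr Q)=o_P(1)$ with $Q$ independent of $\bz_j$, and the deterministic equivalent $N^{-1}\tr(\eta_jI-\text{bulk})^{-1}C$ to precision $o(N^{-1/2})$. You correctly flag the second as the main obstacle; the paper devotes the entire Appendix~\ref{Append:P_2_oP1} to it, using a martingale decomposition in the style of \cite{zhidong2016central} and splitting into two cases according to whether $\tr\Gamma/\sqrt{n}$ tends to zero or not. One minor point: in your sketch you apply the CLT for quadratic forms at the random point $\lambda_j$; the paper avoids this by evaluating everything at the deterministic $\eta_j$ and reading the CLT off the equivalence $\{\tlambda_j<\eta_j\}\Leftrightarrow\{\Yc_j+o_P(1)<b_j\}$. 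Your implicit-function alternative is fine but needs a line justifying that $G(\lambda_j)-G(t_j\theta_j)=o_P(1)$ in the relevant norms.
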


For general non-diagonal $C_N$ and $\Gamma_n$, note that if $Z_{i,j}$ are standard complex Gaussian, or if $Z_{i,j}$ are standard real Gaussian and $C_N, \Gamma_n$ are both real, then the eigenvalues of $S_N(\Gamma)$ have the same joint distribution with the eigenvalues of 
$$\frac{1}{N}\diag(\sqrt{c_1},\dots,\sqrt{c_N})Z\diag(t_1,\dots,t_n)Z^*\diag(\sqrt{c_1},\dots,\sqrt{c_N})\,.$$
Therefore the CLT \ref{th:diag_clt} applies to the Gaussian case no matter whether $C_N, \Gamma_n$ are diagonal. More particularly, applying the above result to $S_N(T)$ with $T_n$ the Toeplitz matrix defined as before, we get the following corollary.
\begin{corol}
Let $T_n$ be a sequence of Toeplitz matrices satisfying \eqref{eq:LM_gamma} or \eqref{eq:LM_f}. Let $S_N(T)$ be defined as before. Let $\Lambda_m(T)$ be defined by replacing $\Gamma$ with $T$ in \eqref{eq:def_Lambda_m}. Assume that \ref{ass:entries_cdts} and \ref{ass:C_N} hold. Then, if $Z_{i,j}$ are standard complex Gaussian, or if $Z_{i,j}$ are standard real Gaussian and $C_N$ is real, we have
$$\Lambda_m(T)\cvweak[N,n\to\infty] \gNr\left(0,\sigma^2\int x^2\dd\nu_C(x) I_m\right)$$
where $\sigma^2=2$ in real Gaussian case, and $\sigma^2=1$ in complex Gaussian case.
\end{corol}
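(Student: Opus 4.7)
The plan is to reduce the corollary to a direct application of Theorem \ref{th:diag_clt} via a scale normalization of $T_n$, a Gaussian rotation to diagonalize, and then a variance computation. First I would observe that both $\lambda_j(S_N(\Gamma))/\lambda_j(\Gamma_n)$ and the quantity $\theta_j$ defined by \eqref{eq:theta_equation} are unchanged when $\Gamma_n$ is multiplied by a positive scalar (for $\theta_j$ this is visible from the explicit fraction $t_k/(t_j - t_k)$). Hence $\Lambda_m(T) = \Lambda_m(\tilde T)$, where $\tilde T_n := T_n/\|T_n\|$. By \eqref{eq:Toep_asy_eval} under \eqref{eq:LM_gamma} and by \eqref{eq:Toep_f_asy_eval} under \eqref{eq:LM_f}, the eigenvalues satisfy $\lambda_j(\tilde T_n) \to a_j := \lambda_j(\K^{(\rho)})/\lambda_1(\K^{(\rho)})$. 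Proposition \ref{prop:simplicity_eigen} together with the compactness and positive semi-definiteness of $\K^{(\rho)}$ ensures that $(a_j)$ is strictly decreasing, strictly positive, and tends to $0$, which gives assumptions \ref{ass:eigenvalue_conv_Gamma} and \ref{ass:spectral_gap} for $\tilde T_n$.

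Next I would invoke the bi-invariance of the Gaussian distribution to diagonalize. Writing the Hermitian spectral decompositions $C_N = U D_C U^*$ and $\tilde T_n = V D_T V^*$, one has
\begin{equation*}
S_N(\tilde T) = \frac{1}{N} U D_C^{1/2} (U^* Z V) D_T (U^* Z V)^* D_C^{1/2} U^*,
\end{equation*}
so the spectrum of $S_N(\tilde T)$ coincides with that of $N^{-1} D_C^{1/2} \tilde Z D_T \tilde Z^* D_C^{1/2}$ with $\tilde Z := U^* Z V$. In the complex Gaussian case the unitary bi-invariance of $Z$ gives $\tilde Z \eqinlaw Z$. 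In the real Gaussian case, $\tilde T_n$ is real symmetric so $V$ may be chosen orthogonal, and the hypothesis that $C_N$ is real permits the same for $U$; the orthogonal bi-invariance of $Z$ then yields $\tilde Z \eqinlaw Z$. This is the only step where the real-$C_N$ hypothesis in the real case intervenes, and it is the subtle point to which I would pay the most attention.

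Theorem \ref{th:diag_clt} then applies to the diagonalized model: the remaining assumptions \ref{ass:entries_cdts}, \ref{ass:C_N}, \ref{ass:C_diagonal}, \ref{ass:Gamma_diagonal}, \ref{ass:moment_6} are either given in the corollary, preserved by unitary conjugation of $C_N$, or automatic from the Gaussianity of $Z$. This yields the centered Gaussian limit with covariance $(\E|Z_{1,1}|^4 - 1)\int x^2 \dd\nu_C(x)\, I_m$. A one-line computation finishes the proof: $\E|Z_{1,1}|^4 - 1 = 3 - 1 = 2$ for a standard real Gaussian, and $\E|Z_{1,1}|^4 - 1 = 2 - 1 = 1$ for a standard complex Gaussian (using that $|Z_{1,1}|^2$ is exponential of mean $1$), matching the announced $\sigma^2$.
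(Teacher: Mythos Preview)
Your proposal is correct and follows essentially the same approach as the paper's own argument, which appears in the paragraph immediately preceding the corollary: reduce to the diagonal case via Gaussian bi-invariance and then invoke Theorem~\ref{th:diag_clt}. You are more explicit than the paper in spelling out the scale invariance $\Lambda_m(T)=\Lambda_m(\tilde T)$, in verifying \ref{ass:eigenvalue_conv_Gamma} and \ref{ass:spectral_gap} for $\tilde T_n$ from the Toeplitz results, and in computing $\E|Z_{1,1}|^4-1$ in both cases, all of which the paper leaves implicit.
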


If $C_N=I_N$, then we can see that
$$\theta_j=1+\frac{1}{N}\sum_{k\ne j}\frac{t_k}{t_j-t_k}.$$
In general, $\theta_j$ has not a closed expression. However, $\theta_j$ can be expressed as a power series of $N^{-1}\sum_{k\ne j}\frac{t_k}{t_j-t_k}$.

\begin{prop}\label{prop:power_series} For a fixed $N$, let $m_k:=\tr C_N^k/N$, let the coefficients $B_0,B_1,\dots$ be defined by the recurrent formula
$$\begin{cases}B_0=m_1 \\ \sum_{k_1+\dots+k_{n+1}=n}\prod_{\ell=1}^{n+1}B_{k_\ell}=m_{n+1}+\sum_{j=1}^{n}m_{n-j+1}\left(\sum_{k_1+\dots+k_j=j}\prod_{\ell=1}^j B_{k_\ell}\right) & \text{ for }n\ge 1.\end{cases}$$
Suppose that not all the eigenvalues of $C_N$ are zero. Then the power series 
$$\theta(z):=B_0+B_1z+B_2z^2+\cdots$$
is the solution of the equation
$$G(\theta(z),z)=1.$$
Its radius of convergence $R$ satisfies
$$R\ge\inf \left\{|z|\tq \exists \theta\in\C \,\text{s.t.}\, G(\theta,z)=1,\quad \frac{\partial G(\theta,z)}{\partial \theta}=0\right\},$$
where we make the convention that $\inf\emptyset = +\infty$.
\end{prop}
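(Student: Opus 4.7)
The approach is to recognize $G(\theta,z)=1$ first as a formal power series equation whose unique solution $\theta(z)$ with $\theta(0)=m_1$ has Taylor coefficients $B_n$ matching the stated recurrence, then to upgrade this formal solution to an analytic one via the holomorphic implicit function theorem, and finally to bound the radius of convergence through the classical theory of algebraic functions.

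\textbf{Formal identification of the recurrence.} Since $m_1=\tr C_N/N>0$ under the hypothesis, any formal series $\theta=m_1+\sum_{k\ge 1}B_kz^k$ is a unit in $\C[[z]]$ and $\theta^{-1}$ is well defined. Expanding $(\theta-zc_i)^{-1}=\sum_{k\ge 0}(zc_i)^k\theta^{-k-1}$ and summing over $i$ gives
$$G(\theta,z)=\sum_{k=0}^\infty m_{k+1}\,z^k\,\theta^{-k-1}.$$
Multiplying the equation $G(\theta,z)=1$ by $\theta^{n+1}$ yields the formal identity $\theta^{n+1}=\sum_{k\ge 0}m_{k+1}z^k\theta^{n-k}$. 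For $k>n$, the factor $\theta^{n-k}$ is a formal power series in $z$ with nonzero constant term, so $z^k\theta^{n-k}$ contributes only at orders $\ge k>n$; extracting $[z^n]$ therefore truncates the sum at $k\le n$. Using $[z^j]\theta^j=\sum_{k_1+\dots+k_j=j}B_{k_1}\cdots B_{k_j}$ and separating the term $k=n$ (which supplies $m_{n+1}$) from the terms $k<n$ (re-indexed by $j=n-k\ge 1$) recovers exactly the stated recurrence. Both the recurrence and the equation $G=1$ determine $B_n$ uniquely from $B_0,\dots,B_{n-1}$—the coefficient of $B_n$ in the recurrence equals $(n+1)B_0^n-nm_1B_0^{n-1}=m_1^n\ne 0$—so they define the same formal power series.

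\textbf{Analyticity and radius bound.} At $(\theta,z)=(m_1,0)$ one has $G(m_1,0)=1$ and $\partial_\theta G(m_1,0)=-1/m_1\ne 0$, so the holomorphic implicit function theorem yields a unique holomorphic $\theta(z)$ near $z=0$ with $\theta(0)=m_1$ and $G(\theta(z),z)=1$; by uniqueness its Taylor coefficients are the $B_n$. To obtain the global bound, let $d_1,\dots,d_r$ denote the distinct eigenvalues of $C_N$ with multiplicities $n_1,\dots,n_r$, and form the polynomial
$$P(\theta,z):=N\prod_{s=1}^r(\theta-zd_s)\bigl[G(\theta,z)-1\bigr]=\sum_s n_sd_s\prod_{s'\ne s}(\theta-zd_{s'})-N\prod_s(\theta-zd_s),$$
of degree $r$ in $\theta$ with leading coefficient $-N$, satisfying $P(\theta(z),z)\equiv 0$. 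Hence $\theta$ is a branch of an algebraic function whose leading coefficient is a nonzero constant, and so it extends analytically in the complement of the branch-point set $\{z_0\tq\exists\theta_0,\ P(\theta_0,z_0)=\partial_\theta P(\theta_0,z_0)=0\}$; the radius of convergence at $0$ is at least the distance from $0$ to that set. Differentiating, $\partial_\theta P=N\prod_s(\theta-zd_s)\partial_\theta G+N(G-1)\partial_\theta\!\prod_s(\theta-zd_s)$, so at a common zero with $\theta_0\ne z_0d_s$ for every $s$ with $d_s\ne 0$ one divides $P=0$ by the nonvanishing product to obtain $G(\theta_0,z_0)=1$, and then $\partial_\theta P=0$ forces $\partial_\theta G(\theta_0,z_0)=0$, which is the geometric condition appearing in the statement.

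\textbf{Main obstacle.} The delicate point is disposing of the ``spurious'' common zeros of $P$ and $\partial_\theta P$ at $\theta_0=z_0d_s$ (where $G$ has a pole) rather than at genuine stationary points of $G(\cdot,z)$. Working with the distinct eigenvalues $d_s$ rather than the full list $(c_i)$ is the key: a direct substitution gives $P(zd_s,z)=n_sd_sz^{r-1}\prod_{s'\ne s}(d_s-d_{s'})$, which is nonzero when $d_s\ne 0$ and $z\ne 0$, so $(zd_s,z)$ is not a common zero at all; and if $0\in\{d_s\}$, one computes $\partial_\theta P(0,z)\ne 0$ for $z\ne 0$, ruling out $\theta_0=0$ as a branch point. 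Combined with the observation that our branch cannot cross a pole of $G(\cdot,z)$ without violating $G=1$ (the residue at $\theta=zd_s$ equals $n_sd_s/N\ne 0$ for $d_s\ne 0$), this identifies the branch-point locus relevant to our branch with $\{G=1,\ \partial_\theta G=0\}$ and yields the claimed lower bound on $R$.
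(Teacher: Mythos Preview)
The paper does not actually supply a proof of this proposition (Section~4 is titled ``Proofs of Proposition~\ref{prop:conv_eigen_SN} and~\ref{prop:power_series}'' but only the first is treated), so I evaluate your argument on its own.

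Your overall strategy---extract the recurrence formally from $G=1$, invoke the holomorphic implicit function theorem at $(m_1,0)$, then pass to the algebraic curve $P=0$ to bound $R$ by the distance to its discriminant locus---is correct and natural. The formal derivation of the recurrence is clean, and the uniqueness check (coefficient of $B_n$ equals $m_1^n\ne 0$) is right.

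There is one genuine error in the case analysis. Your claim ``if $0\in\{d_s\}$, one computes $\partial_\theta P(0,z)\ne 0$ for $z\ne 0$'' is false. When $0$ is an eigenvalue (say $d_r=0$ with multiplicity $n_r$), the factor $\theta-zd_r=\theta$ is superfluous because $G$ has no pole at $\theta=0$; in fact $P(\theta,z)=\theta\cdot\tilde P(\theta,z)$ with $\tilde P$ built from the nonzero distinct eigenvalues only. Then $\partial_\theta P(0,z)=\tilde P(0,z)$, and a direct computation gives $\tilde P(0,z)=(-z)^{r-2}\bigl(\prod_{s\ne r}d_s\bigr)\bigl[(N-n_r)+Nz\bigr]$, which vanishes at $z=-(N-n_r)/N\ne 0$. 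So $(0,-(N-n_r)/N)$ is a common zero of $P$ and $\partial_\theta P$, yet there one has $G(0,z_0)=1$ while $\partial_\theta G(0,z_0)=-\frac{1}{Nz_0^2}\sum_{s\ne r}n_s/d_s$, which is generically nonzero, so this point need not lie in the set $\{G=1,\ \partial_\theta G=0\}$.

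The fix is exactly what the factorization suggests: work with $\tilde P$ (equivalently, take the $d_s$ to be the distinct \emph{nonzero} eigenvalues). Our branch satisfies $\theta(0)=m_1\ne 0$, so it lies on the component $\tilde P=0$, and the discriminant analysis for $\tilde P$ proceeds exactly as you wrote for the case $0\notin\{d_s\}$: for $z_0\ne 0$ the only common zeros of $\tilde P$ and $\partial_\theta\tilde P$ occur at $(\theta_0,z_0)$ with $G(\theta_0,z_0)=1$ and $\partial_\theta G(\theta_0,z_0)=0$. One should also remark that $z_0=0$ may itself lie in the discriminant locus of $\tilde P$ (when there are $q\ge 2$ nonzero distinct eigenvalues, $\tilde P(\theta,0)=N\theta^{q-1}(m_1-\theta)$), but since our branch takes the simple root $m_1$ at $z=0$ the monodromy around $0$ is trivial, and the bound $R\ge\operatorname{dist}(0,\Delta\setminus\{0\})$ follows.
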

From this proposition, we have, under the conditions \ref{ass:C_N}, \ref{ass:eigenvalue_conv_Gamma} and \ref{ass:spectral_gap}, for large enough $N,n$,
$$\theta_j=B_0+B_1\frac{1}{N}\sum_{k\ne j}\frac{t_k}{t_j-t_k}+B_2\left(\frac{1}{N}\sum_{k\ne j}\frac{t_k}{t_j-t_k}\right)^2+\cdots.$$
By the recurrent formula, we obtain
$$B_0=m_1, \quad B_1=\frac{m_2}{m_1},\quad B_2=\frac{m_3}{m_2^2}-\frac{m_2^2}{m_1^3}, \quad\dots$$
So we have
$$\theta_j=m_1+\frac{m_2}{m_1}\left(\frac{1}{N}\sum_{k\ne j}\frac{t_k}{t_j-t_k}\right)+\left(\frac{m_3}{m_2^2}-\frac{m_2^2}{m_1^3}\right)\left(\frac{1}{N}\sum_{k\ne j}\frac{t_k}{t_j-t_k}\right)^2+\cdots\,.$$

\begin{Rq}\label{rq:order_theta_j_m1} In \cite[Example~2.3]{merlevede2019unbounded}, we have given the various orders of $\sqrt{N}(\theta_j-1)$ when $C_N=I_N$, so in general $\theta_j$ can not be replaced by a finite form. However in some particular cases, we can replace $\theta_j$ by a partial sum of its Taylor's expansion. For example, when $\Gamma_n$ satisfies \ref{ass:contr_trace_1} below, we have $\sqrt{N}(\theta_j-m_1)\to 0$. Thus we can replace $\theta_j$ by $m_1$. One can see that the model in \cite{zhang2018clt} is in this case when their $\mathbf \Pi=\mathbf I$ (Theorem~3 of \cite{zhang2018clt}), because their major spiked population eigenvalues are asymptotically $\lambda_k\sim 4T^2/(\pi(2k-1))^2$ as $T\to\infty$, where $T$ denotes the dimension (Lemma~1 and 2 of \cite{zhang2018clt}). That is, with our notations, 
$$t_k=\frac{1}{2n^2(1+\cos(2(n+1-k)\pi/(2n+1)))}, \quad a_k=\frac{4}{\pi^2(2k-1)^2}.$$
And by calculating $n^{-1/2}\tr (\mathbf C^*\mathbf C)/n^2$ where $\mathbf C$ is defined in (2.4) of \cite{zhang2018clt}, we have 
$$\frac{1}{\sqrt{n}}\sum_{k=1}^n t_k=O(1/\sqrt{n}).$$

Similarly, when $\Gamma_n$ satisfies \ref{ass:contr_trace_2} below, $\theta_j$ can be replaced by 
$$m_1+\frac{m_2}{m_1}\left(\frac{1}{N}\sum_{k\ne j}\frac{t_k}{t_j-t_k}\right).$$
\end{Rq}

\subsection{CLT for largest eigenvalues: Some generalizations} \label{subsec:generalclt}
In this section we generalize the CLT to non-diagonal $\Gamma_n$. We continue to assume the other assumptions, and moreover, we assume that one of the two following assumptions holds:

\begin{enumerate}[leftmargin=*, label={\bf A\arabic{enumi}}]
\setcounter{enumi}{8}
    \item \label{ass:contr_trace_1}
    $$\int x\dd\mu^{\Gamma_n}(x)=\frac{\tr \Gamma_n}{n} = o(1/\sqrt{n}) \quad \text{and}\quad \E|Z_{i,j}|^4<\infty.$$
    \item \label{ass:contr_trace_2}
    $$\int x^2\dd\mu^{\Gamma_n}(x)=\frac{\tr \Gamma_n^2}{n} = o(1/\sqrt{n}) \quad \text{and}\quad \E|Z_{i,j}|^8<\infty.$$
\end{enumerate}

\begin{Rq} \label{rq:comment_ass} Under \ref{ass:eigenvalue_conv_Gamma}, because almost all the eigenvalues of $\Gamma_n$ (except for at most a finite number of them) are smaller than $1$, the condition \ref{ass:contr_trace_1} is stronger than \ref{ass:contr_trace_2}. They are some indicators who measure the degree of concentration of the eigenvalues near zero. If $\Gamma_n$ satisfies \ref{ass:contr_trace_1}, then its eigenvalues are more concentrated near $0$ than the case where it just satisfies \ref{ass:contr_trace_2}.

From Proposition~\ref{prop:decay_moments}, we can see that for $\rho\in(-3/4,0)$, the normalized Toeplitz matrix $\tilde T_n$ satisfies \ref{ass:contr_trace_2}, and for $\rho\in(-1/2,0)$,  $\tilde T_n$ satisfies \ref{ass:contr_trace_1}. 
\end{Rq}

\begin{Th}\label{th:joint_fluct} Under \ref{ass:entries_cdts}, \ref{ass:C_N}, \ref{ass:eigenvalue_conv_Gamma}, \ref{ass:C_diagonal}, \ref{ass:spectral_gap}, and either \ref{ass:contr_trace_1} or \ref{ass:contr_trace_2},  we have
\begin{equation}
    \dlp(\Lambda_m(\Gamma_N),\gNr(0,\Sigma_m^{(N)}))\xrightarrow[N,n\to\infty]{} 0\,, \label{eq:th_joint_fluct}
\end{equation}
Where $\Sigma_m^{(N)}=\frac{\tr C_N^2}{N}(I_m+(\sigma_{i,j})_{i,j=1}^m)$ with
\begin{equation}
\sigma_{i,j}=(\E|Z_{1,1}|^4-|\E Z_{1,1}^2|^2-2)\sum_{k=1}^n |u_{i,k}|^2|u_{j,k}|^2+\left|\sum_{k=1}^n u_{i,k}u_{j,k}\right|^2|\E Z_{1,1}^2|^2. \label{eq:sigma_ij}    
\end{equation}
and $u_j:=(u_{j,1},\dots,u_{j,n})^\tran$ is a normalized eigenvector associated with $\lambda_j(\Gamma_n)$.
\end{Th}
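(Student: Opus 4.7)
The plan is to diagonalize $\Gamma_n$, reducing to a diagonal-spike model with correlated-within-row noise, and then extract the joint CLT via a rank-$m$ master equation. The dependence of the variance \eqref{eq:sigma_ij} on the eigenvectors $u_j$ arises from the fourth-moment structure of the driving quadratic forms.

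Write $\Gamma_n = U\Lambda U^*$ with $\Lambda = \diag(t_1,\dots,t_n)$ and the columns of $U$ equal to the eigenvectors $u_j$. Setting $\tilde Z := ZU$, the rows of $\tilde Z$ remain independent and the eigenvalues of $S_N(\Gamma)$ coincide with those of $\tilde S := N^{-1} C_N^{1/2}\tilde Z \Lambda \tilde Z^* C_N^{1/2}$. Split $\Lambda = \Lambda_m \oplus \Lambda_{>m}$ and apply a rank-$m$ Schur complement: any $\lambda$ outside $\spec(\tilde S_{>m})$ is an eigenvalue of $\tilde S$ iff
\begin{equation*}
\det\bigl(I_m + N^{-1}\Lambda_m H(\lambda)\bigr) = 0,\qquad H_{jk}(\lambda) := (Zu_j)^* C_N^{1/2} Q(\lambda) C_N^{1/2} (Zu_k),
\end{equation*}
with $Q(\lambda) := (\tilde S_{>m} - \lambda I)^{-1}$. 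Expanding $Q(\lambda) = -\lambda^{-1}I - \lambda^{-2}\tilde S_{>m} + \cdots$, and noting that the off-diagonal entries $H_{jk}$ ($j\ne k$) only produce $O_P(1/N)$ corrections to the determinant (by the spectral gap \ref{ass:spectral_gap}), the $j$-th eigenvalue equation reduces, up to the relevant order, to $\lambda (Zu_j)^* C_N (Zu_j)/N = t_j + \text{bulk correction}$, where the bulk correction is precisely what is required to match the defining equation \eqref{eq:theta_equation} for $\theta_j$. It is here that the assumptions \ref{ass:contr_trace_1} and \ref{ass:contr_trace_2} enter: they quantify the first- and second-order corrections from $\tilde S_{>m}$ on the $N^{-1/2}$ scale, respectively.

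Because $C_N$ is diagonal (by \ref{ass:C_diagonal}), one has $(Zu_j)^* C_N (Zu_j) = \sum_i c_i |(Zu_j)_i|^2$, a sum of independent contributions across rows of $Z$. A row-martingale CLT then yields the joint asymptotic normality of $\bigl(N^{-1}(Zu_j)^* C_N(Zu_j) - m_1\bigr)_{j=1,\dots,m}$ with $m_1 := N^{-1}\tr C_N$. The covariance is computed through the Isserlis-type identity
\begin{align*}
\E[Z_{1,k_1}\bar Z_{1,k_2} Z_{1,k_3}\bar Z_{1,k_4}] &= \delta_{k_1 k_2}\delta_{k_3 k_4} + \delta_{k_1 k_4}\delta_{k_2 k_3} + |\E Z_{1,1}^2|^2\delta_{k_1 k_3}\delta_{k_2 k_4}\\
&\quad + (\E|Z_{1,1}|^4 - 2 -|\E Z_{1,1}^2|^2)\ind\{k_1=k_2=k_3=k_4\},
\end{align*}
which, contracted against $u_{j,k_1}\bar u_{j,k_2}u_{\ell,k_3}\bar u_{\ell,k_4}$ and summed, reproduces after scaling by $\sqrt N$ the cross-covariance $(\tr C_N^2/N)(\delta_{j\ell} + \sigma_{j\ell})$: the fourth Isserlis term contributes $(\E|Z_{1,1}|^4-|\E Z_{1,1}^2|^2-2)\sum_a|u_{j,a}|^2|u_{\ell,a}|^2$, the third contributes $|\E Z_{1,1}^2|^2|\sum_a u_{j,a}u_{\ell,a}|^2$, and the first two (after subtracting the $\E[|(Zu_j)_i|^2]\E[|(Zu_\ell)_i|^2] = 1$ baseline) yield the $\delta_{j\ell}$. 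Slutsky combined with the boundedness of $\Sigma_m^{(N)}$ (via \ref{ass:C_N}) then delivers the announced Lévy--Prokhorov convergence.

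The main obstacle is the sharp deterministic equivalent for $Q$: under \ref{ass:contr_trace_1} the first-order correction $\tr \Lambda_{>m}/N$ is already $o(N^{-1/2})$ so that $\theta_j$ in \eqref{eq:theta_equation} effectively simplifies to $m_1$, whereas under \ref{ass:contr_trace_2} the trace $\tr \Lambda_{>m}/N$ can be as large as $N^{-1/4}$, forcing a second-order correction involving $\tr \Lambda_{>m}^2/N$ to be carried through the resolvent expansion. This two-tier precision requirement accounts both for the split statement of the theorem and for the stronger eighth-moment assumption needed in the latter case, the latter arising naturally from the Lindeberg-type control of the martingale-CLT increments when higher-variance fourth-moment terms are in play.
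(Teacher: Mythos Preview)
Your overall architecture is close to the paper's: diagonalize $\Gamma_n=U\Lambda U^*$, reduce the eigenvalue equation to a scalar equation driven by the quadratic forms $\by_j^*C\by_j$ with $\by_j=Zu_j$, and prove a joint CLT for $\Yc_j:=N^{-1/2}(\by_j^*C\by_j-\tr C)$. Your covariance computation via the fourth-moment identity is exactly what the paper does and reproduces $\Sigma_m^{(N)}$. Two differences with the paper are worth noting. First, the paper performs a rank-\emph{one} deflation (removing $t_j$ alone to form $S_{(j)}$) rather than a rank-$m$ Schur complement; this avoids the off-diagonal analysis you sketch, and it makes the centering $\theta_j$ (which involves $\sum_{k\ne j}t_k/(t_j-t_k)$, not $\sum_{k>m}$) appear directly. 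Second, the paper establishes the Lindeberg condition for $\sum_i c_i(|Y_{i,j}|^2-1)$ via a separate uniform-integrability lemma (its Lemma~6.1), since $|Y_{1,j}|^4$ is not a fixed random variable but depends on $n$ through $u_j$.

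The genuine gap in your proposal is the remainder. You write that A9/A10 ``quantify the first- and second-order corrections from $\tilde S_{>m}$'', which is true at the level of the deterministic centering, but you do not address the random part
\[
\frac{1}{\sqrt N}\,\by_j^*C^{1/2}(\eta_jI-S_{(j)})^{-1}S_{(j)}C^{1/2}\by_j \;-\;\text{(its centering)} \;=\; o_P(1).
\]
This is the heart of the proof and is nontrivial precisely because $\by_j=Zu_j$ is \emph{not} independent of $S_{(j)}$ when $\Gamma_n$ is non-diagonal; the usual quadratic-form concentration $(\by^*Q\by-\tr Q)/\sqrt N=o_P(1)$ is unavailable. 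The paper resolves this by explicit moment expansions, and the cancellations that make them work rely critically on the orthogonality $u_j^*\Gamma_{(j)}u_j=0$ (see the paper's \eqref{eq:orthogonal_uGamma} and the variance computation \eqref{eq:variation_o1}). Under A9 one expands to first order and bounds $\E\,\by_j^*C S_{(j)} C\by_j$ directly; under A10 one must expand to second order and also control $\var(\by_j^*CS_{(j)}C\by_j)$ and $\E\,\by_j^*CS_{(j)}CS_{(j)}C\by_j$, and it is \emph{these} variance/moment bounds---not the Lindeberg step---that consume the eighth moment $\E|Z_{1,1}|^8$. Your last paragraph misattributes the role of the eighth moment; the Lindeberg condition for $(\Yc_1,\dots,\Yc_m)$ needs only $\E|Z_{1,1}|^4<\infty$.
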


\begin{Rq}In view of the expression \eqref{eq:sigma_ij}, it is not clear that the covariance matrix $\Sigma_m^{(N)}$ converges. In order to avoid any cumbersome assumption enforcing this convergence, we express the CLT with the help of L\'evy-Prokhorov’s distance. If however it happens that $\Sigma_m^{(N)}$ converges to some matrix $\Sigma_m$, then we conclude the CLT in the following usual form
$$\Lambda_m(\Gamma_N)\cvweak[N,n\to\infty] \gNr(0,\Sigma_m).$$
\end{Rq}

From Proposition~\ref{prop:asympt_eigenvect} and Theorem~\ref{th:T_Tprime}, if $T_n$ is a Toeplitz matrix satisfying \eqref{eq:LM_gamma} or \eqref{eq:LM_f}, we can see that the eigenvectors $u_j$ of $T_n$ are delocalized, i.e. $\|u_j\|_\infty=o(1)$. So we have
$$\sum_{k=1}^n |u_{i,k}|^2|u_{j,k}|^2=o(1).$$
Also because $T_n$ are real, we have
$$\sum_{k=1}^n u_{i,k}u_{j,k}=\delta_{ij}\,.$$
So we get the following result:

\begin{corol}\label{th:main_fluct} Let $T_n$ be a sequence of Toeplitz matrices satisfying \eqref{eq:LM_gamma} or \eqref{eq:LM_f}. Let $S_N(T)$ be defined as before. Suppose that \ref{ass:entries_cdts}, \ref{ass:C_N}, \ref{ass:C_diagonal} hold. If one of the following is satisfied:
\begin{enumerate}
    \item The parameter $\rho$ belongs to $(0,1/2)$ and $\E |Z_{i,j}|^4<\infty$;
    \item The parameter $\rho$ belongs to $[1/2,3/4)$ and $\E |Z_{i,j}|^8<\infty$,
\end{enumerate}
then we have
$$\Lambda_m(T_N) \cvweak[N,n\to\infty] \gNr\left(0,(1+|\E Z_{i,j}^2|^2)\int x^2\dd\nu_C I_m\right)\,.$$
\end{corol}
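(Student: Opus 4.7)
The plan is to reduce the statement to an application of Theorem~\ref{th:joint_fluct} applied to the normalized Toeplitz matrix $\tilde T_n := T_n/\|T_n\|$. Observe first that $S_N(T) = \|T_n\| \cdot S_N(\tilde T_n)$, hence $\lambda_j(S_N(T))/\lambda_j(T_n) = \lambda_j(S_N(\tilde T_n))/\lambda_j(\tilde T_n)$ for every $j$. Moreover, the quantities $\{t_k/(t_j-t_k)\}_{k\ne j}$ appearing in \eqref{eq:theta_equation} are invariant under a common positive rescaling of the eigenvalues, so $\theta_j$ is the same whether one uses $T_n$ or $\tilde T_n$. Consequently $\Lambda_m(T_N) = \Lambda_m(\tilde T_n)$ and it suffices to verify the hypotheses of Theorem~\ref{th:joint_fluct} for $\Gamma_n := \tilde T_n$.

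Next I would check the assumptions. The entry condition \ref{ass:entries_cdts}, population condition \ref{ass:C_N} and diagonal condition \ref{ass:C_diagonal} are part of the hypotheses of the corollary. Assumption \ref{ass:eigenvalue_conv_Gamma} is a direct consequence of the asymptotics \eqref{eq:Toep_asy_eval} (when $T_n$ satisfies \eqref{eq:LM_gamma}) or \eqref{eq:Toep_f_asy_eval} (when $T_n$ satisfies \eqref{eq:LM_f}): the eigenvalues of $\tilde T_n$ converge to $a_j := \lambda_j(\K^{(\rho)})/\lambda_1(\K^{(\rho)})$, which form a strictly decreasing sequence of positive numbers tending to zero thanks to the simplicity of the non-zero eigenvalues of $\K^{(\rho)}$ (Proposition~\ref{prop:simplicity_eigen}). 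That simplicity combined with the same asymptotics also yields the multiple spectral gap \ref{ass:spectral_gap}. Finally, the trace-decay assumption \ref{ass:contr_trace_1} in the case $\rho\in(0,1/2)$ (paired with the fourth moment condition), and \ref{ass:contr_trace_2} in the case $\rho\in[1/2,3/4)$ (paired with the eighth moment condition), are furnished respectively by items~\ref{enum:th:tech_assumpt_1} and \ref{enum:th:tech_assumpt_2} of Proposition~\ref{prop:decay_moments}.

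Theorem~\ref{th:joint_fluct} then yields $\dlp(\Lambda_m(T_N),\gNr(0,\Sigma_m^{(N)}))\to 0$, where $\Sigma_m^{(N)} = (\tr C_N^2/N)(I_m + (\sigma_{i,j}))$ and $u_1,\dots,u_m$ appearing in \eqref{eq:sigma_ij} are the normalized eigenvectors of $\tilde T_n$ associated with its $m$ largest eigenvalues (these coincide with those of $T_n$). The remaining step is to identify the limit of $\Sigma_m^{(N)}$. For the first sum in \eqref{eq:sigma_ij}, Proposition~\ref{prop:asympt_eigenvect} — extended to the case \eqref{eq:LM_f} via Theorem~\ref{th:T_Tprime}, specifically \eqref{eq:T_Tprime_evect} — gives the delocalization bound $\|u_j\|_\infty = o(1)$ for every fixed $j\le m$, so that
$$\sum_{k=1}^n |u_{i,k}|^2|u_{j,k}|^2 \le \|u_i\|_\infty^2\,\|u_j\|^2 = o(1).$$
For the second sum, since $T_n$ is real symmetric its eigenvectors can be taken real, so orthonormality gives $\sum_{k=1}^n u_{i,k} u_{j,k} = \delta_{i,j}$. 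Combining these with the convergence $\tr C_N^2/N \to \int x^2\dd\nu_C(x)$ (which follows from \ref{ass:C_N} using the bounded spectral norm to pass from weak convergence to convergence of the second moment), we obtain
$$\Sigma_m^{(N)} \xrightarrow[N,n\to\infty]{} \bigl(1+|\E Z_{1,1}^2|^2\bigr) \int x^2 \dd\nu_C(x)\, I_m.$$
Since $\dlp$-convergence together with convergence of the limiting Gaussian covariance matrix implies weak convergence to the limit Gaussian (by the triangle inequality for $\dlp$ and the fact that $\dlp$ metrizes weak convergence), the asserted CLT follows.

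The main obstacle has really been handled upstream: it is the trace estimate of Proposition~\ref{prop:decay_moments}, which is what forces the dichotomy $\rho<1/2$ vs.\ $\rho\in[1/2,3/4)$ and the corresponding moment conditions; beyond that, the proof of the corollary amounts to identifying the limiting covariance structure using delocalization and the orthonormality of real Toeplitz eigenvectors.
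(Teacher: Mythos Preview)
Your proposal is correct and follows essentially the same approach as the paper: apply Theorem~\ref{th:joint_fluct} to the normalized Toeplitz matrix $\tilde T_n$, verifying \ref{ass:eigenvalue_conv_Gamma} and \ref{ass:spectral_gap} via the eigenvalue asymptotics and Proposition~\ref{prop:simplicity_eigen}, \ref{ass:contr_trace_1}/\ref{ass:contr_trace_2} via Proposition~\ref{prop:decay_moments}, and then simplify $\Sigma_m^{(N)}$ using delocalization (Proposition~\ref{prop:asympt_eigenvect} together with Theorem~\ref{th:T_Tprime}) and the real orthonormality $\sum_k u_{i,k}u_{j,k}=\delta_{ij}$. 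If anything, your write-up is more explicit than the paper's, which simply records the two facts about $\sum_k|u_{i,k}|^2|u_{j,k}|^2$ and $\sum_k u_{i,k}u_{j,k}$ in the paragraph immediately preceding the corollary.
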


\section{Proofs of the theorems on Toeplitz matrices}
\label{sec:proof_toeplitz}
In Section~\ref{subsec:preparation_toep}, \ref{subsec:proof_simple_eigen} and \ref{subsec:proof_eigenvect} we focus on Toeplitz matrices satisfying \eqref{eq:LM_gamma}. In Section~\ref{subsec:proof_spect_density} we treat the Toeplitz matrices satisfying \eqref{eq:LM_f}. And in Section~\ref{subsec:proof_decay_moments} we prove Proposition~\ref{prop:decay_moments} for Toeplitz matrices satisfying either \eqref{eq:LM_gamma} or \eqref{eq:LM_f}. 
\subsection{Some preparation} \label{subsec:preparation_toep}
Let $T_n$ satisfy \eqref{eq:LM_gamma}. Note that by the definition of slowly varying function, $\gamma(h)$ is positive for $h$ sufficiently large.

For $p\in [1,\infty]$, and for $n$ sufficiently large such that $\gamma(n)\ne 0$, we define a finite-rank operator $\K_n^{(\gamma)}$ acting on $L^p(0,1)$ by
\begin{equation}(\K_n^{(\gamma)}f)(x)=\int_0^1 \frac{\gamma(\lfloor nx\rfloor-\lfloor ny\rfloor)}{\gamma(n)}f(y)\dd y\,. \label{eq:K_N_c}
\end{equation}

The operator $\K^{(\rho)}$ in \eqref{eq:defi_K_rho} is also well-defined for any $f\in L^p(0,1)$ by the integral formula:
\begin{equation}
(\K^{(\rho)}f)(x)=\int_0^1 |x-y|^{-\rho} f(y)\dd y\,.
\end{equation}
The operators $\K_n^{(\gamma)}$ and $\K^{(\rho)}$ acting on $L^p(0,1)$ are bounded, see \cite[Lemma~5.4]{merlevede2019unbounded}. Moreover, from Lemma~5.4 of \cite{merlevede2019unbounded}, we have the convergence 
\begin{equation}
    \|\K_n^{(\gamma)}-\K^{(\rho)}\|_p\to 0 \quad \text{as}\quad n\to\infty, \quad \forall p\in[1,\infty]\,. \label{eq:proof:cv_operator}
\end{equation}
The convergence \eqref{eq:proof:cv_operator} has many useful consequences in this proof. The first consequence is that the operator $\K^{(\rho)}$ is compact on $L^p(0,1)$ for any $p\in[1,\infty]$. 

For each $p\in[1,\infty]$, $\K^{(\rho)}$ (resp. $\K_n^{(\gamma)}$) has its spectrum as an operator acting on $L^p(0,1)$. The following proposition shows that its non-zero eigenvalues and the associated eigenfunctions are invariant as $p$ changes.

\begin{prop} \label{th:WH_spect_invar}
The non-zero eigenvalues and the associated eigenfunctions of $\K^{(\rho)}:L^p(0,1)\rightarrow L^p(0,1)$ and $\K_n^{(\gamma)}:L^p(0,1)\rightarrow L^p(0,1)$ do not change when $p$ runs across $[1,\infty]$. 
\end{prop}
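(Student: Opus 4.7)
The plan is to show, for each of the two operators, that every eigenfunction associated with a non-zero eigenvalue which lies \emph{a priori} in some $L^p(0,1)$ is in fact an element of $L^\infty(0,1)$. Because $(0,1)$ has finite measure, $L^\infty(0,1)\subset L^{p'}(0,1)$ for every $p'\in[1,\infty]$, and the eigenvalue equation is a pointwise almost-everywhere identity whose meaning is independent of the ambient $L^{p'}$. Combining these two observations, each non-zero eigenspace viewed in $L^p$ will coincide with the $L^\infty$-eigenspace, and hence be the same for every $p\in[1,\infty]$.

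For $\K_n^{(\gamma)}$ the regularity step is immediate. At fixed $n$ the kernel $k_n(x,y)=\gamma(\lfloor nx\rfloor-\lfloor ny\rfloor)/\gamma(n)$ takes only finitely many values, so it is bounded on $(0,1)^2$, and therefore $\K_n^{(\gamma)}\colon L^1(0,1)\to L^\infty(0,1)$ is bounded. Since $L^p(0,1)\subset L^1(0,1)$ for every $p\in[1,\infty]$, any eigenfunction $f\in L^p$ with eigenvalue $\lambda\neq 0$ satisfies $f=\lambda^{-1}\K_n^{(\gamma)}f\in L^\infty(0,1)$; in fact $f$ is then piecewise constant on the intervals $((k-1)/n,k/n)$.

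For $\K^{(\rho)}$ I would run a regularity bootstrap. The convolution kernel $t\mapsto|t|^{-\rho}\mathbf{1}_{|t|<1}$ belongs to $L^r(\R)$ for every $r<1/\rho$ because $\rho\in(0,1)$, so by Young's inequality $\K^{(\rho)}$ maps $L^p(0,1)$ boundedly into $L^q(0,1)$ with $1/q=1/p+1/r-1$; the reciprocal exponent strictly decreases by $1-1/r$ at each iteration, and it reaches the Young endpoint giving $L^\infty(0,1)$ once $p>1/(1-\rho)$ can be achieved. Given an eigenfunction $f\in L^p$ with $\K^{(\rho)}f=\lambda f$ and $\lambda\neq 0$, the iterated identity $f=\lambda^{-k}(\K^{(\rho)})^kf$ applies; picking $r$ close enough to $1/\rho$ that $1-1/r$ is bounded away from zero, finitely many iterations deposit $f$ into $L^\infty(0,1)$.

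The main obstacle is this bootstrap for $\K^{(\rho)}$: one must choose $r<1/\rho$ so that the reciprocal exponent strictly decreases at each step and control the number of iterations uniformly in the starting $p$. Once $L^\infty$ regularity is obtained, the conclusion is formal. The embedding $L^\infty(0,1)\hookrightarrow L^{p'}(0,1)$ places the eigenfunction into every $L^{p'}$, and the eigenvalue equation $\K^{(\rho)}f=\lambda f$ (respectively $\K_n^{(\gamma)}f=\lambda f$), being a pointwise a.e. identity, remains valid in every $L^{p'}$-interpretation. Consequently the $L^p$- and $L^{p'}$-eigenspaces attached to any non-zero eigenvalue coincide, proving the proposition.
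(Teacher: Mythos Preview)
Your argument is correct and takes a genuinely different route from the paper. The paper's proof is essentially a one-line citation: it observes that $L^p(0,1)$ and $L^\infty(0,1)$ are \emph{compatible} Banach spaces in the sense of Davies, that the integral operators defined by the same kernel are \emph{consistent} on their intersection, and that both operators are compact on every $L^p$ (this compactness having been established just before, via the finite-rank approximation $\|\K_n^{(\gamma)}-\K^{(\rho)}\|_p\to 0$). It then invokes Theorem~4.2.15 of Davies' \emph{Linear Operators and Their Spectra}, which states that consistent compact operators on compatible spaces share the same non-zero eigenvalues and eigenvectors. Your approach instead establishes $L^\infty$-regularity of eigenfunctions directly---trivially for the bounded kernel of $\K_n^{(\gamma)}$, and via a Young-inequality bootstrap for $\K^{(\rho)}$---and then uses the inclusion $L^\infty(0,1)\subset L^{p'}(0,1)$ together with the pointwise nature of the eigenvalue equation. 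Your route is more elementary and fully self-contained; it does not need compactness at all, and it makes the $L^\infty$ membership quantitatively explicit rather than leaving it inside an abstract black box. The paper's version is shorter to write down but leans on an external reference and on the prior compactness result. One small wording point: in your bootstrap you should simply fix any $r\in(1,1/\rho)$ once and for all; the gain $1-1/r$ per iteration is then a fixed positive constant, and the final step to $L^\infty$ occurs once the running exponent $p_k$ satisfies $1/p_k\le 1-1/r$ (not merely $p_k>1/(1-\rho)$, which is the limiting case as $r\uparrow 1/\rho$).
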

\begin{proof} 
We only prove the result for $\K^{(\rho)}$. The same argument applies to $\K_n^{(\gamma)}$.

We only need to prove that, for any $p\in [1,\infty)$, the operator $\K^{(\rho)}:L^p(0,1)\rightarrow L^p(0,1)$ has the same non-zero eigenvalues and associated eigenfunctions as $\K^{(\rho)}:L^\infty(0,1)\rightarrow L^\infty(0,1)$. As we have already noticed that $\K^{(\rho)}$ is compact on $L^p(0,1)$ and on $L^\infty(0,1)$, the desired result is a direct application of Theorem~4.2.15 in \cite{davies2007linear}. 

Indeed, we recall that two Banach spaces $B_1$ and $B_2$ or their associated norms are said to be compatible if $B =  B_1 \cap B_2$ is dense in each of them, and the following condition is satisfied: if $f_n\in B$, $\|f_n-f\|_{B_1} \to 0$ and  $\|f_n -g\|_{B_2} \to 0$, then $f = g \in B$. The operators $A_i: B_i\rightarrow C_i$ with $i=1,2$ and $C_1, C_2$ two Banach spaces, are said to be consistent if $A_1f = A_2f$ for all $f \in B_1 \cap B_2$. Then we can verify that $L^p(0,1)$ and $L^\infty(0,1)$ are compatible, and $\K^{(\rho)}$ defined by an integral formula is obviously consistent. Then Theorem~4.2.15 in \cite{davies2007linear} applies.
\end{proof}

According to the above proposition, when we talk about the non-zero eigenvalues and the associated eigenfunctions of these operators, we do not need to specify the space. 

\subsection{Proof of Proposition~\ref{prop:simplicity_eigen}} \label{subsec:proof_simple_eigen}
Let $\lambda\ne 0$ be an eigenvalue of $\K^{(\rho)}$ and $f$ be an associated eigenfunction. We now prove that $f$ is continuous on $[0,1]$.

Note that $f$ satisfies the equation
$$f(x)=\lambda^{-1}\int_0^1 |y-x|^{-\rho} f(y)\dd y\,,$$
and from Proposition~\ref{th:WH_spect_invar}, $f$ also belongs to $L^\infty(0,1)$. So for any $x_0\in[0,1]$, one has
$$|f(x)-f(x_0)|\le \lambda^{-1}\|f\|_\infty\int_0^1\left| |y-x|^{-\rho}- |y-x_0|^{-\rho} \right|\dd y\le \lambda^{-1}\|f\|_\infty\int_{-1}^1\left| |y-x+x_0|^{-\rho}- |y|^{-\rho} \right|\dd y\,,$$
and the integral on the RHS tends to $0$ when $|x-x_0|\to 0$.

We now prove that all non-zero eigenvalues of $\K^{(\rho)}$ are simple. We need the following key lemma. It says that any normalized eigenfunction of $\K^{(\rho)}$ associated with a non-zero eigenvalue, taken at $x=1$, has the absolute value $\sqrt{1-\rho}$. 

\begin{lemma}\label{lem:extreme_value_eigenfunc} Let $\lambda>0$ be a non-zero eigenvalue of $\K^{(\rho)}$, and let $f$ be a normalized eigenfunction associated with $\lambda$. Then $f$ satisfies
\begin{equation}
    |f(1)|=\sqrt{1-\rho}\,. \label{eq:extreme_value_eigenfun}
\end{equation}
\end{lemma}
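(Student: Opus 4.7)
The plan is to exploit the scaling invariance of the kernel $|x-y|^{-\rho}$. Since $\K^{(\rho)}$ is self-adjoint with a real symmetric kernel, I may assume that $f$ is real-valued; the complex case goes through identically with $|f|^{2}$ in place of $f^{2}$. For $T>0$, introduce the quadratic form
\begin{equation*}
u(T) := \int_0^T\!\int_0^T |x-y|^{-\rho}\, f(x/T)\, f(y/T)\, dx\, dy,
\end{equation*}
and compute $u'(T)$ in two different ways; matching the two expressions will pin down $f(1)^{2}$.

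The first computation uses rescaling. The change of variables $x = Ts$, $y = Tt$ immediately gives $u(T) = T^{2-\rho}\langle f, \K^{(\rho)}f\rangle = T^{2-\rho}\lambda$, using $\|f\|=1$ and $\K^{(\rho)}f=\lambda f$. Hence $u'(T) = (2-\rho)\, T^{1-\rho}\lambda$.

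The second computation applies Leibniz' rule directly. The two symmetric boundary contributions at $x=T$ and $y=T$ sum to
\begin{equation*}
2 f(1)\int_0^T (T-y)^{-\rho} f(y/T)\, dy = 2 f(1)\, T^{1-\rho}\,(\K^{(\rho)}f)(1) = 2 f(1)^{2}\, T^{1-\rho}\lambda .
\end{equation*}
The interior contribution, coming from $\partial_T f(x/T) = -x T^{-2} f'(x/T)$ and its $y$-analogue, rescales by the same $x=Ts,\ y=Tt$ substitution to
\begin{equation*}
-\,2 T^{1-\rho}\!\int_0^1 s f'(s)(\K^{(\rho)}f)(s)\, ds = -\,2 T^{1-\rho}\lambda\!\int_0^1 s f'(s) f(s)\, ds.
\end{equation*}
Writing $s f'(s) f(s) = \tfrac{s}{2}(f^{2})'(s)$ and integrating by parts on $[\varepsilon,1-\varepsilon]$, then letting $\varepsilon\to 0$ (using only the continuity of $f$ on $[0,1]$ and $\|f\|=1$), yields $\int_0^1 s f'(s) f(s)\, ds = \tfrac{1}{2}(f(1)^{2}-1)$, so the interior piece equals $T^{1-\rho}\lambda\,(1-f(1)^{2})$. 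Adding the two pieces, $u'(T) = T^{1-\rho}\lambda\,(1 + f(1)^{2})$. Matching against the rescaling formula gives $1+f(1)^{2} = 2-\rho$, that is $|f(1)| = \sqrt{1-\rho}$.

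The main technical point is the regularity required for Leibniz' rule and the integration by parts: I need $f$ to be differentiable in $(0,1)$. This follows from the smoothing property of $\K^{(\rho)}$. The estimate already used to prove continuity of $f$ in fact shows $f\in C^{1-\rho}([0,1])$, and iterating the identity $f = \lambda^{-1}\K^{(\rho)}f$, whose kernel is smooth off the diagonal, propagates regularity to $C^{\infty}$ on any compact subset of $(0,1)$. Together with the $[\varepsilon,1-\varepsilon]$ truncation this justifies every step. Once this lemma is in hand, simplicity of non-zero eigenvalues is immediate: for two linearly independent eigenfunctions of the same $\lambda$, one can form a non-trivial linear combination vanishing at $1$, contradicting $|f(1)| = \sqrt{1-\rho}>0$.
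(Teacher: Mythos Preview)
Your argument is correct and reaches the same conclusion, but it takes a genuinely different route from the paper and pays for it in regularity.

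Both proofs exploit the scaling of the kernel: the rescaled function $f(\cdot/\tau)$ is an eigenfunction of the operator on $L^2(0,\tau)$ with eigenvalue $\lambda\tau^{1-\rho}$. The paper (following Rao) pairs this rescaled eigenfunction against $f$ itself on $(0,1)$, obtaining the identity
\[
\tau^{1-\rho}\int_0^1 f(x/\tau)\,\overline{f(x)}\,dx \;=\;\int_0^\tau\overline{g(y)}\,f(y/\tau)\,dy
\]
for a continuous extension $g$ of $f$; subtracting the common piece over $[0,1]$ and dividing by $\tau-1$ yields $1-\rho=|f(1)|^2$ in the limit $\tau\to1^+$, using only the continuity of $f$ on $[0,1]$ already established. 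You instead differentiate the quadratic form $u(T)$ via Leibniz' rule, which forces $f'$ into the interior term and then into the integration by parts.

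That is where your proof is thinner. The claim that iterating $f=\lambda^{-1}\K^{(\rho)}f$ propagates regularity to $C^\infty$ on compact subsets of $(0,1)$ is true---it is the standard bootstrap for Riesz potentials, each application gaining $1-\rho$ derivatives locally---but it is a nontrivial piece of potential theory that you assert rather than prove. Moreover $f'(x)$ genuinely blows up like $(1-x)^{-\rho}$ near $x=1$ (as one reads off from the leading term $\lambda^{-1}f(1)\int_0^1|x-y|^{-\rho}\,dy$), so both the Leibniz step with a singular kernel and the passage to the limit in the $[\varepsilon,1-\varepsilon]$ truncation need the care you hint at but do not carry out in detail. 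None of this is fatal; your approach simply leans on more analytic machinery than the paper's, which gets away with a bare difference quotient and continuity alone.
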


A result similar to the above lemma first appeared in \cite{rao1976eigenvalues} for a general but square integrable kernel $k(x-y)$, see Theorem~3 of \cite{rao1976eigenvalues}. Note that thanks to the explicit formula of $\K^{(\rho)}$, the result of Lemma~\ref{lem:extreme_value_eigenfunc} is stronger than \cite{rao1976eigenvalues}. Directly using Theorem~3 of \cite{rao1976eigenvalues}, we can only conclude that for $\rho\in(-1/2,0)$, for any non-zero eigenvalue $\lambda$ of $\K^{(\rho)}$, there exists a group of orthonormal eigenfunctions $f_{\lambda,1},\dots,f_{\lambda,m}$ associated with $\lambda$, where $m$ is the multiplicity of $\lambda$, such that
$$|f_{\lambda,i}(1)|=\sqrt{1-\rho}\,.$$
However we will notice that this result is not sufficient to prove the simplicity of eigenvalues.

Whenever Lemma~\ref{lem:extreme_value_eigenfunc} is proved, we can prove the simplicity of any non-zero eigenvalue $\lambda$ of $\K^{(\rho)}$ by contradiction. Assume to the contrary that $\lambda>0$ had multiplicity $m\ge 2$, then we could choose two orthonormal eigenfunctions $f_{\lambda,1}, f_{\lambda,2}$ associated with $\lambda$. From Lemma~\ref{eq:extreme_value_eigenfun}, without loss of generality we can assume that $f_{\lambda,1}(1)=f_{\lambda,2}(1)=\sqrt{1-\rho}$. Then the function 
$$f_{\lambda}:=\frac{1}{\sqrt{2}} f_{\lambda,1}+\frac{1}{\sqrt{2}} f_{\lambda,2}$$
is also a normalized eigenfunction of $\K^{(\rho)}$. But this function satisfies
$$f_{\lambda}(1)=\sqrt{2(1-\rho)}\ne \sqrt{1-\rho}\,,$$
which is a contradiction to Lemma~\ref{lem:extreme_value_eigenfunc}.

Thus it remains to prove Lemma~\ref{lem:extreme_value_eigenfunc}.

\begin{proof}[Proof of Lemma~\ref{lem:extreme_value_eigenfunc}] We follow the outline of the proof in \cite{rao1976eigenvalues}. For any $\tau>0$, we define $\K^{(\rho)}_\tau$ the operator on $L^2(0,\tau)$ by
\begin{equation}
(\K^{(\rho)}_\tau f)(x)=\int_0^\tau |x-y|^{-\rho} f(y)\dd y\, . \label{eq:def_Kappa_tau}
\end{equation}
By a change of variable, it is easy to see that a function $f\in L^2(0,1)$ is an eigenfunction of $\K^{(\rho)}$ associated with an eigenvalue $\lambda$ if and only if $f(\frac{\cdot}{\tau})$ is an eigenfunction of $\K^{(\rho)}_\tau$ associated with the eigenvalue $\lambda \tau^{1-\rho}$. By this fact, a positive number $\lambda$ is an eigenvalue of $\K^{(\rho)}$ with multiplicity $m$ if and only if $\lambda \tau^{1-\rho}$ is an eigenvalue of $\K^{(\rho)}_\tau$ with the same multiplicity $m$ for all $\tau>0$. 

Suppose that $f$ is a normalized eigenfunction of $\K^{(\rho)}$ associated with non-zero eigenvalue $\lambda>0$. Then for any $\tau>1$ we have the following two equations
\begin{equation}
    \lambda\tau^{1-\rho} f\left(\frac{x}{\tau}\right) = \int_0^\tau |x-y|^{-\rho} f\left(\frac{y}{\tau}\right)\dd y \, , \text{ for } x\in(0,\tau) \label{eq:proof_lemma_ex_va_1}
\end{equation}
and
\begin{equation}
    \lambda\overline{f(y)} = \int_0^1 |x-y|^{-\rho} \overline{f(x)}\dd x \, , \text{ for } y\in(0,1).\label{eq:proof_lemma_ex_va_2}
\end{equation}
We define the function $g$ on $[0,\infty)$ by
$$g(y)=\frac{1}{\lambda}\int_0^1 |x-y|^{-\rho} f(x)\dd x\,, \text{ for } y\in[0,\infty),$$
then $g$ is a continuous extension of $f$ on $[0,\infty)$. Multiply the two sides of \eqref{eq:proof_lemma_ex_va_1} by $\overline{f(x)}$, and integrate for $x\in [0,1]$, we get
\begin{equation}
    \lambda\tau^{1-\rho} \int_0^1 f\left(\frac{x}{\tau}\right)\overline{f(x)}\dd x = \int_0^1\int_0^\tau |x-y|^{-\rho} f\left(\frac{y}{\tau}\right)\overline{f(x)}\dd y\dd x\,.
\end{equation}
Note that by the boundedness of $f$, Fubini Theorem applies to the RHS, thus changing the order of two integrations and taking into account the definition of $g$, we get
\begin{equation}
    \tau^{1-\rho} \int_0^1 f\left(\frac{x}{\tau}\right)\overline{f(x)}\dd x = \int_0^\tau \overline{g(y)} f\left(\frac{y}{\tau}\right)\dd y\,. \label{eq:proof_lemma_ex_va_3}
\end{equation}
It is easy to see from \eqref{eq:proof_lemma_ex_va_3} that
\begin{equation}
    \left(\frac{\tau^{1-\rho}-1}{\tau-1}\right)\int_0^1 f\left(\frac{x}{\tau}\right)\overline{f(x)}\dd x 
    = \frac{\int_1^\tau \overline{g(y)} f\left(\frac{y}{\tau}\right)\dd y }{\tau-1} \, . \label{eq:proof_lemma_ex_va_4}
\end{equation}
Letting $\tau\to 1^+$ on the two sides of \eqref{eq:proof_lemma_ex_va_4}, and noting that the continuity of $f$ on $[0,1]$ implies the uniform convergence of $f(\frac{x}{\tau})$ to $f(x)$, we get
\begin{equation}
    1-\rho = |f(1)|^2
\end{equation}
and the result follows.
\end{proof}

\subsection{Proof of Proposition~\ref{prop:asympt_eigenvect}} \label{subsec:proof_eigenvect}
Let $j\ge 1$ be an integer. By the proof of Theorem~2.3 in \cite{merlevede2019unbounded}, we have $\lambda_j(\K^{(\rho)})>0$. Also by Proposition~\ref{prop:simplicity_eigen}, we have $\lambda_{j-1}(\K^{(\rho)})>\lambda_j(\K^{(\rho)})>\lambda_{j+1}(\K^{(\rho)})$. Let $f_j$ be a normalized eigenfunction associated with $\lambda_j(\K^{(\rho)})$. In the sequel, we shall rely on the spectral projections (to be defined later) to construct an eigenvector of $T_n$ associated with $\lambda_j(T_n)$ and prove that such an eigenvector approximates $f_j$ in the sense of  \eqref{eq:th:asympt_eig_vec}.

Let $\varepsilon=\frac{1}{2}\min(\lambda_{j-1}(\K^{(\rho)})-\lambda_j(\K^{(\rho)}), \lambda_{j}(\K^{(\rho)})-\lambda_{j+1}(\K^{(\rho)}))$ and $\mathscr{C}$ be the circle centered at $\lambda_j(\K^{(\rho)})$ and of radius $\varepsilon$ on complex plane. We take $n$ sufficiently large such that  $\|\K_n^{(\gamma)}-\K^{(\rho)}\|_\infty<\varepsilon$. So we have  $|\lambda_k(\K_n^{(\gamma)})-\lambda_k(\K^{(\rho)})|<\varepsilon$ for all $1\le k\le n$, which implies that only the eigenvalues $\lambda_j(\K_n^{(\gamma)})$ and $\lambda_j(\K^{(\rho)})$ are enclosed by $\mathscr C$ and all the other eigenvalues are outside $\mathscr{C}$. We define the spectral projections
\begin{equation}
    P_n:=\frac{1}{2\pi \im}\int_{\mathscr C} (z-\K_n^{(\gamma)})^{-1}\dd z\quad \text{and}\quad P:=\frac{1}{2\pi \im}\int_{\mathscr C} (z-\K^{(\rho)})^{-1}\dd z.
\end{equation}
By Riesz decomposition Theorem (c.f. for example  \cite[Theorem~1.5.4 and Theorem~4.3.19]{davies2007linear}), $P_n$ (resp. $P$) is a projection onto the eigenspace of $\K_n^{(\gamma)}$ (resp. $\K^{(\rho)}$)) corresponding to $\lambda_j(\K_n^{(\gamma)})$ (resp. $\lambda_j(\K^{(\rho)})$).  To those who are unfamiliar with Riesz' Theorem, we explain the arguments with $\K^{(\rho)}$ and $P$. Indeed, from Riesz' Theorem, $P$ is a finite rank projection which commutes with $\K_{\rho}$. Let $\range(P)$ be the range of $P$, then $\range(P)$ is an invariant space of $\K^{(\rho)}$ (due to the commutativity of the projection $P$ and $\K^{(\rho)}$), and the restriction of $\K^{(\rho)}$ to $\range(P)$ is self-adjoint (because $\K^{(\rho)}$ is self-adjoint) and has spectrum $\{\lambda_j(\K^{(\rho)})\}$, then from the finite dimensional linear algebra, $\range(P)$ is spanned by the eigenfunctions of $\K^{(\rho)}$ associated with $\lambda_j(\K^{(\rho)})$. Therefore, recall that $f_j$ is a normalized eigenfunction of $\K^{(\rho)}$ associated with $\lambda_j(\K^{(\rho)})$, we have $Pf_j=f_j$. The same argument shows that $P_n$ is a projection to the eigenspace of $\K_n^{(\gamma)}$ and thus $P_nf_j$ is an eigenfunction of $\K_n^{(\gamma)}$ associated with $\lambda_j(\K_n^{(\gamma)})$, in condition that $P_nf_j\ne 0$.

We prove that $\|P_n-P\|_\infty\to 0$. Indeed we have
$$\|P_n-P\|_\infty\le \frac{1}{2\pi}\int_{\mathscr C}\|(z-\K_n^{(\gamma)})^{-1}-(z-\K^{(\rho)})^{-1}\|_\infty|\dd z|.$$
Thus the main task is to uniformly control $\|(z-\K_n^{(\gamma)})^{-1}-(z-\K^{(\rho)})^{-1}\|_\infty$ in term of $\|\K_n^{(\gamma)}-\K^{(\rho)}\|_\infty$ for $z\in\mathscr{C}$. As $(z-\K^{(\rho)})^{-1}$ is analytic outside of $\spec(\K^{(\rho)})$, there exists $K>0$ such that $\sup_{z\in\mathscr{C}}\|(z-\K^{(\rho)})^{-1}\|_\infty\le K$. Let $n$ be sufficiently large such that $\|\K_n^{(\gamma)}-\K^{(\rho)}\|_\infty<1/(2K)$. Then we have
$$\begin{aligned}\left\|(z-\K_n^{(\gamma)})^{-1}-(z-\K^{(\rho)})^{-1}\right\|_\infty &= \left\|(z-\K^{(\rho)}-\K_n^{(\gamma)} +\K^{(\rho)})^{-1}-(z-\K^{(\rho)})^{-1}\right\|_\infty       \\
&= \left\|(z-\K^{(\rho)})^{-1}\left[\left(I-(\K_n^{(\gamma)}-\K^{(\rho)})(z-\K^{(\rho)})^{-1}\right)^{-1}-I\right]\right\|_\infty  \\
&=\left\|(z-\K^{(\rho)})^{-1}\sum_{k=1}^\infty((\K_n^{(\gamma)}-\K^{(\rho)})(z-\K^{(\rho)})^{-1})^k\right\|_\infty \\
    &\le \sum_{k=1}^\infty \frac{K^{k+1}}{2^{k-1}K^{k-1}}\|\K_n^{(\gamma)}-\K^{(\rho)}\|_\infty^{k}\\
    &= 2K^2\|\K_n^{(\gamma)}-\K^{(\rho)}\|_\infty.\end{aligned}$$
Thus as $n\to\infty$ we have
$$\|P_n-P\|_\infty\le \frac{1}{2\pi}\int_{\mathscr{C}}\|(z-\K_n^{(\gamma)})^{-1}-(z-\K^{(\rho)})^{-1}\|_\infty|\dd z|\le {2\varepsilon K^2}\|\K_n^{(\gamma)}-\K^{(\rho)}\|_\infty \to 0.$$

From this convergence we have
\begin{equation}
    \|P_nf_j-f_j\|_\infty=\|P_nf_j-Pf_j\|_\infty\xrightarrow[n\to\infty]{} 0\,. \label{eq:proof:cv_eigenfunc}
\end{equation}
Then from \eqref{eq:proof:cv_eigenfunc} we obtain
\begin{equation}\|P_nf_j\|_2\xrightarrow[n\to\infty]{} \|f_j\|_2=1\,. \label{eq:proof:cv_eigenfunc_norm} \end{equation}
Combining \eqref{eq:proof:cv_eigenfunc} and \eqref{eq:proof:cv_eigenfunc_norm} we conclude
\begin{equation}\left\|\frac{P_nf_j}{\|P_nf_j\|_2}-f_j\right\|_\infty \xrightarrow[n\to\infty]{} 0\,. \label{eq:proof:cv_eigenfunc_normalized}\end{equation}

Notice that the range of $\K^{(\gamma)}_n$ consists of step functions
$$f(x)=\sum_{k=1}^n v_k\ind_{[\frac{k-1}{n},\frac{k}{n})}(x)\,,$$
so the eigenfunctions of $\K^{(\gamma)}_n$ must also have this form. Notice also that a $n$-dimensional normalized vector $v=(v_k)_{k=1}^n$ is an eigenvector of $T_n$ associated with $\lambda_j(T_n)$ if and only if the normalized function
\begin{equation}
    f(x)=\sqrt{n}\sum_{k=1}^n v_k\ind_{[\frac{k-1}{n},\frac{k}{n})}(x) \label{eq:proof:eigenfunc_vect}
\end{equation} is an eigenfunction of $\K_n^{(\gamma)}$ associated with $\lambda_j(\K_n^{(\gamma)})=\lambda_j(T_n)/(n\gamma(n))$. Since $Pf_j/\|Pf_j\|_2$ is a normalized eigenfunction of $\K_n^{(\gamma)}$, by the relation \eqref{eq:proof:eigenfunc_vect}, up to a change of sign, we have
\begin{equation}
    u_{j,k}=\frac{(P_nf_j)(\frac{k-1}{n})}{\sqrt{n}\|P_nf_j\|_2}, \quad \forall k=1,\dots,n.
\end{equation}
From \eqref{eq:proof:cv_eigenfunc_normalized} we get the desired result \eqref{eq:th:asympt_eig_vec}.

\subsection{Proof of Theorem~\ref{th:T_Tprime}} \label{subsec:proof_spect_density}
Let $T_n=(\gamma(i-j))$ be a $n\times n$ Toeplitz matrix with spectral density $\varphi$ satisfying \eqref{eq:LM_f}. Let $T'_n=(\gamma'(i-j))$ be a $n\times n$ Toeplitz matrix with spectral density $x\mapsto 1/|x|^{1-\rho}, x\in[-\pi,\pi]$. Let $D_n(x):=\frac{\sin((n+1/2)x)}{\sin(x/2)}$ be the Dirichlet kernel. From the theory of Toeplitz matrices, we have
\begin{equation}
    \begin{aligned}\left\|\frac{T_n}{n^{1-\rho}L_2(n)}-\frac{T'_n}{n^{1-\rho}}\right\| &\le \sup_{x\in [-\pi,\pi]}\left|\frac{1}{n^{1-\rho}L_2(n)}\sum_{k=-n}^n\gamma(k)e^{\im k x} - \sum_{k=-n}^n\frac{1}{n^{1-\rho}}\gamma'(k)e^{\im k x}\right| \\
    &\le \sup_{x\in [-\pi,\pi]} \frac{1}{2\pi n^{1-\rho}}\int_{-\pi}^{\pi} \frac{1}{|y|^{1-\rho}}\left|\frac{L_2(|y|^{-1})}{L_2(n)}-1\right| |D_n(x-y)|\dd y.
\end{aligned} \label{eq:ttpr_spn_ctrl}
\end{equation}
By the inequality (2.2.1) of \cite{pipiras2017long}, for a certain $0<\delta<\min(\rho/2, (1-\rho)/2)$, there exists $0<\eta<1$ such that
\begin{equation}
    \frac{L_2(|u|^{-1})}{L_2(|v|^{-1})}\le 2\max\left(\left|\frac{u}{v}\right|^\delta, \left|\frac{v}{u}\right|^\delta\right), \quad \forall u,v\in (-\eta,\eta). \label{eq:bound_L2_ratio}
\end{equation}
Then because $L_2$ is locally bounded, we have
$$\begin{aligned}\sup_{x\in [-\pi,\pi]} \frac{1}{n^{1-\rho}L_2(n)}\int_{\eta<|y|\le \pi} \frac{L_2(|y|^{-1})}{|y|^{1-\rho}}|D_n(x-y)|\dd y &\le \frac{K}{n^{1-\rho}L_2(n)}\sup_{x}\int_{-\pi}^\pi |D_n(x-y)|\dd y \\
    & = O\left(\frac{\log n}{n^{1-\rho}L_2(n)}\right)\xrightarrow[n\to\infty]{} 0\,.\end{aligned}$$
The same argument also gives
$$\sup_{x\in [-\pi,\pi]} \frac{1}{n^{1-\rho}}\int_{\eta<|y|\le \pi} \frac{1}{|y|^{1-\rho}}|D_n(x-y)|\dd y \xrightarrow[n\to\infty]{} 0\,.$$
Combining the last two inequalities, and using the triangle inequality, we get
$$\sup_{x\in [-\pi,\pi]} \frac{1}{n^{1-\rho}}\int_{\eta<|y|\le \pi} \frac{1}{|y|^{1-\rho}}\left|\frac{L_2(|y|^{-1})}{L_2(n)}-1\right| |D_n(x-y)|\dd y\xrightarrow[n\to\infty]{}0\,.$$
Thus in order to prove that \eqref{eq:ttpr_spn_ctrl} tends to $0$, we only need to prove that
$$\sup_{x\in [-\pi,\pi]} \frac{1}{n^{1-\rho}}\int_{-\eta}^{\eta} \frac{1}{|y|^{1-\rho}}\left|\frac{L_2(|y|^{-1})}{L_2(n)}-1\right| |D_n(x-y)|\dd y\to 0\,.$$
By changing variables we write
\begin{equation}
   \begin{aligned} &\sup_{x\in [-\pi,\pi]}\frac{1}{n^{1-\rho}}\int_{-\eta}^{\eta} \frac{1}{|y|^{1-\rho}}\left|\frac{L_2(|y|^{-1})}{L_2(n)}-1\right| |D_n(x-y)|\dd y \\
   =& \sup_{x\in [-n\pi,n\pi]}\int_{-n\eta}^{n\eta} \frac{1}{|y|^{1-\rho}}\left|\frac{L_2(n|y|^{-1})}{L_2(n)}-1\right| \left|\frac{1}{n}D_n(\frac{x-y}{n})\right|\dd y\,. \end{aligned} \label{eq:density_change_variable}
\end{equation}
Let 
$$\Delta(x):=\min(|x|^{-1}, 1).$$
From the properties of Dirichlet kernels, there exists a constant $K>0$ such that for any $-n\pi\le x\le n\pi$ and $-n\eta\le y\le n\eta$,
\begin{equation}
   n^{-1}|D_n((x-y)/n)|\le K\Delta(x-y). \label{eq:D_n_Delta}
\end{equation}
For any $\varepsilon>0$, let $A>1$ be a large enough positive number to be determined afterwards. Then
$$\begin{aligned} & \int_{-n\eta}^{n\eta} \frac{1}{|y|^{1-\rho}}\left|\frac{L_2(n|y|^{-1})}{L_2(n)}-1\right| \left|\frac{1}{n}D_n(\frac{x-y}{n})\right|\dd y \\
\le & K\int_{-n\eta}^{n\eta} \frac{1}{|y|^{1-\rho}}\left|\frac{L_2(n|y|^{-1})}{L_2(n)}-1\right| \Delta(x-y)\dd y \\ 
    = & K\left(\int_{-\frac{1}{A}}^\frac{1}{A}+\int_{\frac{1}{A}<|y|\le A}+\int_{A<|y|\le n\eta}\right)\frac{1}{|y|^{1-\rho}}\left|\frac{L_2(n|y|^{-1})}{L_2(n)}-1\right| \Delta(x-y)\dd y \\
    =: & K(P_1(x)+P_2(x)+P_3(x)).\end{aligned}$$
Because $\Delta$ is bounded by $1$, and also by \eqref{eq:bound_L2_ratio}, we have
$$\sup_x P_1(x)\le \int_{-\frac{1}{A}}^\frac{1}{A}\frac{1}{|y|^{1-\rho}}\left(\frac{L_2(n|y|^{-1})}{L_2(n)}+1\right)\dd y\le \int_{-\frac{1}{A}}^\frac{1}{A}\frac{3}{|y|^{1-\rho+\delta}}\dd y=\frac{6}{(\rho-\delta)A^{\rho-\delta}}.$$
Using \eqref{eq:bound_L2_ratio} and Young's convolution inequality, let $\frac{1}{p}=\frac{1-\rho-\delta}{2}, \frac{1}{q}=\frac{1+\rho+\delta}{2}$, we have
$$\sup_{x}P_3(x)\le \sup_{x}\int_{A<|y|\le n\eta}\frac{3}{|y|^{1-\rho-\delta}}\Delta(x-y)\dd y\le \left|2\int_A^\infty \frac{\dd y}{y^2}\right|^{1/p}\|\Delta\|_q\le \frac{2^{1/p}\|\Delta\|_q}{A^{1/p}}.$$
Let $A=\max(\varepsilon^{-p}, \varepsilon^{-\frac{1}{\rho-\delta}})$, then there exists $K>0$ such that
$$\sup_x (P_1(x)+P_3(x))\le K\varepsilon\,.$$
By the uniform convergence theorem for slowly varying functions (see for example (2.2.4) of \cite{pipiras2017long}), for large enough $n$, we have
$$\sup_{|y|\in[A^{-1},A]}\left|\frac{L_2(n|y|^{-1})}{L_2(n)}-1\right|\le \frac{\varepsilon}{(4\log A)^{1-\rho}}.$$
Then using Young's inequality again, we have
$$\sup_{x}P_2(x)\le \frac{\varepsilon}{(4\log A)^{1-\rho}}\left|2\int_{1/A}^A \frac{\dd y}{y}\right|^{1-\rho}\|\Delta\|_{\frac{1}{\rho}}= \|\Delta\|_{\frac{1}{\rho}}\varepsilon\,.$$
Therefore, there exists $K>0$ such that, for any $\varepsilon>0$, for large enough $n$, 
$$\sup_{x\in [-\pi,\pi]} \frac{1}{n^{1-\rho}}\int_{-\pi}^{\pi} \frac{1}{|y|^{1-\rho}}\left|\frac{L_2(|y|^{-1})}{L_2(n)}-1\right| |D_n(x-y)|\dd y < K\varepsilon$$
and the proof of \eqref{eq:T_Tprime_norm} is complete. 

The convergence \eqref{eq:Toep_f_asy_eval} is an immediate consequence of \eqref{eq:T_Tprime_norm}, \eqref{eq:Toep_asy_eval} and \eqref{eq:relation_L1L2}. To prove \eqref{eq:T_Tprime_evect}, using the spectral projections and repeat the same procedure as in the proof of Proposition~\ref{prop:asympt_eigenvect} with $L^2\to L^2$ norm, the result then follows.

\subsection{Proof of Proposition~\ref{prop:decay_moments}} \label{subsec:proof_decay_moments}
First we prove Item \ref{enum:th:tech_assumpt_1}. Let $T_n$ satisfies \eqref{eq:LM_gamma} or \eqref{eq:LM_f} and assume $\rho\in(0,1/2)$. From \eqref{eq:Toep_asy_eval} or \eqref{eq:Toep_f_asy_eval}, there exists a constant $K>0$ such that
$$\lambda_1(T_n)\sim Kn^{1-\rho}L_i(n)$$
with $i=1, 2$. Since $n^{1/2-\rho}L_i(n)\to \infty$, we have
$$\frac{\tr T_n}{\sqrt{n}\lambda_1(T_n)}=\frac{\sqrt{n} \gamma(0)}{\lambda_1(T_n)}\sim \frac{\gamma(0)}{Kn^{1/2-\rho}L_i(n)}\to 0\,.$$

Then we prove Item \ref{enum:th:tech_assumpt_2}. Let $T_n$ satisfy \eqref{eq:LM_gamma} with $\rho\in[1/2,3/4)$. Note that
$$\tr T_n^2=\|T_n\|_F^2\le 2n\sum_{k=0}^n|\gamma(k)|^2\,.$$
Also from \eqref{eq:Toep_asy_eval}, we have $\lambda_1^2(T_n)\sim \lambda_1^2(\K^{(\rho)})n^{2+2\rho}L_1^2(n)\gg n^{1/2+\epsilon}$ for some $\epsilon>0$, where for two sequences of positive numbers $(x_n)_n$ and $(y_n)_n$, the notation $x_n\gg y_n$ means that $y_n/x_n\to 0$. We then have
$$\frac{\tr T_n^2}{\sqrt{n}\lambda_1^2(T_n)}\ll \frac{\sum_{k=0}^n|\gamma(k)|^2}{n^\epsilon}\le \frac{1}{n^{\epsilon/2}}\left(\gamma(0)+\sum_{k=1}^n \frac{|\gamma(k)|^2}{k^{\epsilon/2}}\right)\,.$$
Since $|\gamma(k)|^2\sim L_1^2(k) k^{2\rho}\le L_1^2(k) k^{-1}$, it follows that 
$$\sum_{k=1}^\infty \frac{|\gamma(k)|^2}{k^{\epsilon/2}}\le \sum_{k=1}^\infty \frac{L_1^2(k)}{k^{1+\epsilon/2}} <\infty\,.$$
Hence
$$\frac{\tr \tilde T_n^2}{\sqrt{n}}=\frac{\tr T_n^2}{\sqrt{n}\lambda_1^2(T_n)}\ll \frac{1}{n^{\epsilon/2}}\left(\gamma(0)+\sum_{k=1}^\infty \frac{L_1^2(k)}{k^{1+\epsilon/2}}\right)\to 0\,.$$
Now let $T_n$ satisfy \eqref{eq:LM_f} with $\rho\in[1/2,3/4)$. From Theorem~\ref{th:T_Tprime}, there exists $K>0$ such that $\|T_n\|\sim Kn^{1-\rho}L_2(n)$.  From the formula 
$$\gamma(k)=\frac{1}{2\pi}\int_{-\pi}^{\pi}\varphi(x)e^{-\im kx}\dd x\,,$$
we have
$$\tr T_n^2=\frac{n}{4\pi^2}\int_{-\pi}^{\pi}\int_{-\pi}^{\pi}\varphi(x)\varphi(y)F_n(x-y)\dd x\dd y$$
where $F_n(x)=\frac{\sin^2(\frac{nx}{2})}{n\sin^2(\frac{x}{2})}$ is the F\'ejer kernel. Let $1<r<\frac{2}{4\rho-1}$, and $p$ be such that $2/p+1/r=2$. Then we have $1<p<2$. From the form of $\varphi$, we have $\varphi\in L^p(-\pi,\pi)$. By Young's inequality,
$$\frac{\tr T_n^2}{\sqrt{n}\|T_n\|^2}\le \frac{\|\varphi\|_p^2\|F_n\|_r}{n^{3/2-2\rho}L_2^2(n)}\,.$$
Note that
$$\begin{aligned}\|F_n\|_r & = \frac{1}{2\pi n}\left|\int_{-\pi}^\pi \frac{\sin^{2r}(\frac{nx}{2})}{\sin^{2r}{(\frac{x}{2})}}\dd x\right|^{1/r}
=\frac{1}{2\pi n^{1+1/r}}\left|\int_{-\pi n}^{\pi n} \frac{\sin^{2r}(\frac{x}{2})}{\sin^{2r}{(\frac{x}{2n})}}\dd x\right|^{1/r} \\
& \le \frac{K}{n^{1/r-1}}\left|\int_{-\pi n}^{\pi n} \frac{\sin^{2r}(\frac{x}{2})}{x^{2r}}\dd x\right|^{1/r} \le \frac{K}{n^{1/r-1}}\left|\int_\R \frac{\sin^{2r}(\frac{x}{2})}{x^{2r}}\dd x\right|^{1/r}. \end{aligned}$$
Then
$$\frac{\tr T_n^2}{\sqrt{n}\|T\|^2}\le \frac{K}{n^{1/2+1/r-2\rho}L_2^2(n)}\to 0\,.$$

\section{Proofs of Proposition~\ref{prop:conv_eigen_SN} and~\ref{prop:power_series}}
\subsection{Proof of Proposition~\ref{prop:conv_eigen_SN}}
Let $j\ge 1$ be a fixed integer. We first prove the convergence of $\lambda_j(S_N(\Gamma))$ in probability. Suppose that $\Gamma_n=U\diag(t_1,\dots,t_n)U^*$ where $U=(u_1,\dots,u_n)$ is a unitary matrix whose columns are $u_1,\dots,u_n$. Recall that \ref{ass:eigenvalue_conv_Gamma} holds. For any $\varepsilon>0$ sufficiently small, let $\ell>j$ be the smallest integer such that $a_\ell < \varepsilon/(2K)$, where $K=\sup\|C_N\|(1+\sqrt{r})^2(1+\varepsilon)$.  let $n$ be large enough such that $|t_k-a_k|<\varepsilon/(2K)$ for $k=1,\dots,\ell+1$.

Let $\tilde \Gamma_n=U\diag(t_1,\dots,t_\ell,0,\dots,0)U^*$. Then we have
$$S_N(\Gamma)=S_N(\tilde \Gamma_n)+\frac{1}{N}C_N^{1/2}Z(\Gamma_n-\tilde \Gamma_n)Z^*C_N^{1/2}$$
As $\|\Gamma_n-\tilde \Gamma_n\|=t_{\ell+1}<\varepsilon/K$, from \cite{yin1988limit} we know that for large $N,n$, with high probability, 
$$\|\frac{1}{N}C_N^{1/2}Z(\Gamma_n-\tilde \Gamma_n)Z^*C_N^{1/2}\|\le \|C_N\|\|\Gamma_n-\tilde \Gamma_n\|(1+\sqrt{r})^2(1+\varepsilon)\le \varepsilon.$$
Thus from the stability of spectrum of Hermitian matrices,  with high probability, we have
$$|\lambda_j(S_N(\Gamma))-\lambda_j(S_N(\tilde\Gamma))|\le \varepsilon.$$
So we only need to prove that 
$$\lambda_j(S_N(\tilde \Gamma))\xrightarrow[N,n\to\infty]{\mathcal P} a_j\int x\dd\nu_C(x).$$

The matrix $S_N(\tilde \Gamma)$ has the same non-zero eigenvalues with the $\ell\times \ell$ matrix
$$\begin{aligned}A_\ell &:= \frac{1}{N}\diag(\sqrt{t_1},\dots,\sqrt{t_\ell})(u_1,\dots,u_\ell)^*Z^*C_NZ(u_1,\dots,u_\ell)\diag(\sqrt{t_1},\dots,\sqrt{t_\ell}) \\
    &= \left(\frac{1}{N}\sqrt{t_it_k}u_i^*Z^*C_NZu_k\right)_{i,k=1}^\ell \,.\end{aligned}$$
Then because $A_\ell$ is a fixed-dimensional matrix, it suffices to prove that each of its entries converges in probability. Note that
$$\begin{aligned} & \var \frac{1}{N}\sqrt{t_it_k}u_i^*Z^*C_NZu_k \\
= & \frac{t_it_k}{N^2}\left(\sum_{p=1}^n |u_{i,p}|^2|u_{k,p}|^2\sum_{p=1}^NC_{p,p}^2(\E|Z_{1,1}|^4-|\E Z_{1,1}^2|^2-2)+\left|\sum_{p=1}^n u_{i,p}u_{k,p}\right|^2|\E Z_{1,1}^2|^2\tr C_NC_N^\tran + \tr C_N^2\right)\end{aligned}$$
where $C_{p,p}$ are the diagonal entries of $C_N$. Because $\|C_N\|$ is uniformly bounded, we have
$$\max\left(\frac{\sum_{p=1}^NC_{p,p}^2}{N^2},\, \frac{\tr C_NC_N^\tran}{N^2}\right) \le \frac{\tr C_N^2}{N^2}=O(1/N).$$
Thus we have
$$\|A_\ell-\E A_\ell\|\xrightarrow[N,n\to\infty]{\mathcal P} 0.$$
Combine with the equality
$$\E A_\ell = \frac{\tr C_N}{N}\diag(t_1,\dots,t_\ell),$$
we obtain the convergence in probability.

Assume that $Z_{i,j}$ are standard real or complex Gaussian and prove the almost sure convergence. We argue similarly as the proof of Proposition~4.1 of \cite{merlevede2019unbounded}. Precisely we will prove that for any $\epsilon>0$,
\begin{equation}
\P\left(\left|\sqrt{\lambda_j(S_N(\Gamma))}-\E\sqrt{\lambda_j(S_N(\Gamma))}\right|>\epsilon\right)<2e^{-KN\epsilon^2} \label{eq:concentration}
\end{equation}
where $K$ is a constant. Indeed using  \cite[Theorem~5.6]{boucheron2013concentration} we only need to prove that the function
$$Z\mapsto \sqrt{\lambda_j(S_N(\Gamma))}$$
is $\frac{1}{\sqrt{KN}}$-Lipschitz with respect to the Frobenius norm $\|Z\|_F$. Let $Z, \hat Z$ be two $N\times n$ matrices, and let 
$$S=\frac{1}{N}C_N^{1/2}Z\Gamma_nZ^*C_N^{1/2},\quad \hat S=\frac{1}{N}C_N^{1/2}\hat Z\Gamma_n\hat Z^*C_N^{1/2}.$$
Then by Wielandt-Hoffmann inequality for singular values, we have
$$\sum_{j=1}^{\min(N,n)}\left|\sqrt{\lambda_j(S)}-\sqrt{\lambda_j(\hat S)}\right|^2\le \frac{1}{N}\|C_N^{1/2}(Z-\hat Z)\Gamma_n^{1/2}\|_F^2\le \frac{\|C_N\|\|\Gamma_n\|}{N}\|Z-\hat Z\|_F^2.$$
Thus we have
$$\left|\sqrt{\lambda_j(S)}-\sqrt{\lambda_j(\hat S)}\right|\le \frac{K}{\sqrt{N}}\|Z-\hat Z\|_F.$$
This proves the Lipschitz property and the concentration inequality \eqref{eq:concentration} holds. Then by Borel-Cantelli's lemma, we have
$$\sqrt{\lambda_j(S_N(\Gamma))}-\E\sqrt{\lambda_j(S_N(\Gamma))}\xrightarrow[N,n\to\infty]{\as}0.$$
Together with the convergence in probability
$$\lambda_j(S_N(\Gamma))\xrightarrow[N,n\to\infty]{\mathcal P}a_j\int x\dd\nu_C(x),$$
the almost sure convergence in the Gaussian case follows.

We now assume that the bound condition \ref{ass:bound_cdt} holds and prove the following lemma which will be useful in Section~\ref{sec:proof_diag_clt}. As a byproduct, this lemma implies the almost sure convergence of $\lambda_j(S_N(\Gamma))$.

\begin{lemma}\label{lemma:high_proba_concentration} Under \ref{ass:entries_cdts}, \ref{ass:C_N}, \ref{ass:eigenvalue_conv_Gamma}, \ref{ass:C_diagonal}, \ref{ass:Gamma_diagonal} and \ref{ass:bound_cdt}, for any $j\ge 1$ and any $\epsilon>0$, with overwhelming probability,
$$\left|\lambda_j(S_N(\Gamma))-a_j\int x\dd\nu_C(x)\right|<\epsilon.$$
\end{lemma}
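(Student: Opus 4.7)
The plan is to adapt the convergence-in-probability argument already sketched for Proposition~\ref{prop:conv_eigen_SN}, upgrading each probabilistic estimate to an overwhelming-probability one by exploiting the entry-wise bound \ref{ass:bound_cdt}. Fix $\epsilon>0$. Exactly as in the earlier proof, pick $\ell>j$ large enough that $a_\ell<\epsilon/(4K)$, where $K$ bounds $\|C_N\|(1+\sqrt{r})^2$, and truncate $\Gamma_n$ to $\tilde\Gamma_n = \diag(t_1,\ldots,t_\ell,0,\ldots,0)$, so that $\|\Gamma_n-\tilde\Gamma_n\|\le \epsilon/(2K)$ for large $n$. To pass from $S_N(\Gamma)$ to $S_N(\tilde\Gamma_n)$ with overwhelming probability it suffices to show
$$\bigl\|N^{-1}C_N^{1/2}Z(\Gamma_n-\tilde\Gamma_n)Z^*C_N^{1/2}\bigr\|\le\epsilon,$$
and since $|Z_{ij}|\le \varepsilon_n\sqrt{n}$ (which, by the remark following \ref{ass:bound_cdt}, may be taken of polynomial rate $\varepsilon_n=n^{-\delta}$ for some small $\delta>0$), a standard $\epsilon$-net combined with Bernstein's inequality yields $\|Z\|\le K\sqrt{N}$ with overwhelming probability, giving the required norm bound.

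Second, because $C_N$ and $\Gamma_n$ are both diagonal (so the eigenvectors of $\Gamma_n$ are the standard basis vectors), the non-zero spectrum of $S_N(\tilde\Gamma_n)$ coincides with that of the $\ell\times\ell$ matrix $A_\ell$ whose $(i,k)$-entry equals
$$\frac{\sqrt{t_i t_k}}{N}\sum_{p=1}^N c_p \overline{Z_{p,i}} Z_{p,k}.$$
Each such entry is a sum of $N$ independent random variables (across $p$), each bounded by $O(\varepsilon_n^2)$ thanks to \ref{ass:bound_cdt} and $\|C_N\|=O(1)$, with total variance $O(1/N)$. Bernstein's inequality therefore gives
$$\P\bigl(|(A_\ell)_{ik}-\E(A_\ell)_{ik}|>\epsilon\bigr)\le 2\exp\!\left(-\frac{c\,\epsilon^2 N}{1+N\varepsilon_n^2\epsilon}\right),$$
which is overwhelming under \ref{ass:bound_cdt}. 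A union bound over the $\ell^2$ entries preserves overwhelming probability, and since
$$\E A_\ell=\frac{\tr C_N}{N}\diag(t_1,\ldots,t_\ell)\longrightarrow \int x\,\dd\nu_C(x)\cdot\diag(a_1,\ldots,a_\ell),$$
the Hoffman--Wielandt bound for Hermitian matrices yields the claimed deviation of $\lambda_j(S_N(\tilde\Gamma_n))$ from $a_j\int x\,\dd\nu_C(x)$, and combining with the truncation step completes the argument.

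The main obstacle is not the entry-wise concentration of $A_\ell$, which is routine once one has the bound \ref{ass:bound_cdt}, but rather obtaining an overwhelming-probability control on the tail perturbation $\Gamma_n-\tilde\Gamma_n$: the convergence-in-probability argument invoked results from \cite{yin1988limit} which, strictly speaking, only furnish high-probability bounds. Here one must instead work directly with the truncated entries and establish $\|Z\|=O(\sqrt{N})$ with overwhelming probability, and this is precisely where the polynomial decay rate of $\varepsilon_n$ afforded by \ref{ass:bound_cdt} (rather than the mere $\varepsilon_n\to 0$) enters in an essential way.
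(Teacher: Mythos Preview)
Your outline mirrors the paper's proof exactly: truncate $\Gamma_n$ to its top $\ell$ eigenvalues, control the operator-norm perturbation, and then show that the $\ell\times\ell$ matrix $A_\ell$ concentrates entrywise. The gap is in the quantitative step. You assert that \ref{ass:bound_cdt} ``may be taken of polynomial rate $\varepsilon_n=n^{-\delta}$,'' and you rely on this both for the $\epsilon$-net argument and for the Bernstein bound on $(A_\ell)_{ik}$. But \ref{ass:bound_cdt} only guarantees $\varepsilon_n\to 0$; the Remark you cite explicitly allows $\varepsilon_n$ to decay \emph{slower than any preassigned rate} (indeed, under a bare fourth-moment assumption one cannot do better). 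With such a slow $\varepsilon_n$, your Bernstein bound
\[
\exp\!\Bigl(-\tfrac{c\epsilon^2 N}{1+N\varepsilon_n^2\epsilon}\Bigr)\ \asymp\ \exp\!\bigl(-c\epsilon/\varepsilon_n^2\bigr)
\]
need not be $o(n^{-M})$ for every $M$ (this would require $\varepsilon_n=o(1/\sqrt{\log n})$), so the conclusion ``overwhelming under \ref{ass:bound_cdt}'' does not follow. The $\epsilon$-net control of $\|Z\|$ fails for the same reason: an exponential-in-$n$ union bound cannot be absorbed by a tail of order $\exp(-c/\varepsilon_n^2)$.

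The paper closes both holes differently. For the entrywise concentration it uses Bennett's inequality rather than Bernstein; the Bennett tail carries an extra factor $\log(1+bn\varepsilon_n^2)$ in the exponent, and since one may always assume $\sqrt{n}\,\varepsilon_n^2\to\infty$ (replacing $\varepsilon_n$ by a larger sequence still tending to $0$), the product $\varepsilon_n^{-2}\log(bn\varepsilon_n^2)$ dominates $M\log n$ for any fixed $M$, yielding overwhelming probability without any rate hypothesis on $\varepsilon_n$. For the operator-norm step the paper does not use an $\epsilon$-net at all but invokes the moment-method bound of Yin--Bai--Krishnaiah (Theorem~3.1 of \cite{yin1986limiting}), which directly gives $\|N^{-1}ZZ^*\|\le K$ with overwhelming probability under the truncated fourth-moment assumption. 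If you replace your two Bernstein/net steps by these two ingredients, the rest of your argument goes through verbatim.
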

For the definition of "overwhelming probability" or "tiny probability", refer to Definition~\ref{def:proba_termino}.
\begin{proof} We can repeat the first part of the proof of Proposition~\ref{prop:conv_eigen_SN} and we just need to verify that each "high probability" can be replaced by "overwhelming probability" under the assumptions of this lemma. Let $\tilde\Gamma_n, A_{\ell}$ be defined as above. From Theorem~3.1 of \cite{yin1986limiting}, we know that with overwhelming probability,
$$\|\frac{1}{N}C_N^{1/2}Z(\Gamma_n-\tilde \Gamma_n)Z^*C_N^{1/2}\|\le \epsilon\,.$$
Then we only need to prove that under the assumptions of this lemma, for any $\epsilon>0$, with overwhelming probability, 
$$\left| \frac{1}{N}u_i^*Z^*C_NZu_k-\E \frac{1}{N}u_i^*Z^*C_NZu_k\right|\le\epsilon\,.$$
As $C_N,\Gamma_n$ are diagonal, the above inequality is actually
\begin{equation}
    \frac{1}{N}\left|\sum_{p=1}^N c_p(\overline{Z_{p,i}}Z_{p,k}-\E\overline{Z_{p,i}}Z_{p,k}) \right|\le\epsilon\,. \label{eq:lemma_proof_ineq}
\end{equation}
Let 
$$\sigma^2=\sum_{p=1}^N\var (c_p\overline{Z_{p,i}}Z_{p,k})=\var (\overline{Z_{1,i}}Z_{1,k})\tr C_N^2.$$
Note that $\var (\overline{Z_{1,i}}Z_{1,k})=1$ if $i\ne k$, and $\var (\overline{Z_{1,i}}Z_{1,k})=\E|Z_{1,1}|^4-1$ if $i=k$. We assume that $\var (\overline{Z_{1,i}}Z_{1,k})\ne 0$, because otherwise we have $i=k$ and $|Z_{p,i}|^2=1$ almost surely, then \eqref{eq:lemma_proof_ineq} holds almost surely, and there is nothing to prove. 

Using Bennett's inequality (8b) of \cite{bennett1962probability}, for any $t>0$, one has
$$\P\left(\left|\sum_{p=1}^N c_p(\overline{Z_{p,i}}Z_{p,k}-\E\overline{Z_{p,i}}Z_{p,k}) \right|>t\sigma\right)\le 2e^{t(\sigma/(n\varepsilon_n^2))}\left(1+t\frac{n\varepsilon_n^2}{\sigma}\right)^{-t(\sigma/(n\varepsilon_n^2)+(\sigma/(n\varepsilon_n^2))^2)}.$$
As $\sigma^2/N\to \var (\overline{Z_{1,i}}Z_{1,k})\int x^2\dd\nu_C(x)\ne 0$, let $t$ be such that $t\sigma=N\epsilon$, then we have
$$\P\left(\frac{1}{N}\left|\sum_{p=1}^N c_p(\overline{Z_{p,i}}Z_{p,k}-\E\overline{Z_{p,i}}Z_{p,k})\right|>\epsilon\right)\le e^{a/\varepsilon_n^2}(1+bn\varepsilon_n^2)^{-c/\varepsilon_n^2-d/(\sqrt{n}\varepsilon_n^4)}$$
where $a,b,c,d$ are some positive constants. Because $\varepsilon_n$ is an almost sure upper bound of $|Z_{i,k}|/\sqrt{n}$, we can assume that $\sqrt{n}\varepsilon_n^2\to\infty$. Then for any fixed $M>0$, and for large enough $N,n$,
$$a/\varepsilon_n^2-(c/\varepsilon_n^2+d/(\sqrt{n}\varepsilon_n^4))\ln{(1+bn\varepsilon_n^2)}<-M\ln n.$$
Then the result follows.
\end{proof}

\section{Proof of Theorem~\ref{th:diag_clt}} \label{sec:proof_diag_clt}
In this section and Section~\ref{sec:proof_generalclt}, in order to simplify the notation, we omit the subscription $N$ and $n$ of $C_N$ and $\Gamma_n$, so they are just denoted as $C$ and $\Gamma$. We also simplify the notation $S_N(\Gamma)$ as $S$, and denote the eigenvalues of $S$ by
$$\tlambda_1\ge \tlambda_2 \ge \dots \ge \tlambda_N\,.$$

We prove the CLT for largest eigenvalues of $S$ in the following steps. First we truncate, recenter and rescale the entries of $Z$ so that $|Z_{i,j}|\le \varepsilon_n \sqrt{n}$ where $\varepsilon_n$ is a sequence of positive numbers tending to $0$. The truncation step is identical to the approach used in the proof of Theorem~1.1 of \cite{bai2004clt}, from where we know that this does not affect the result. So from now on we assume that \ref{ass:bound_cdt} holds.

Then in order to prove the weak convergence of $\Lambda_m(\Gamma)$, it suffices to prove that for any fixed vector $(b_1,\dots,b_m)^\tran\in\R^m$, we have
\begin{equation}
\P\left(\sqrt{N}\left(\frac{\tlambda_j}{t_j}-\theta_j\right)<b_j \quad \text{for}\quad 1\le j\le m \right)\xrightarrow[N,n\to\infty]{} \P\left(\mathbf G_j<b_j\quad \text{for}\quad 1\le j\le m\right)\,, \label{eq:cv_proba}
\end{equation}
where the random vector $(\mathbf G_1,\dots,\mathbf G_m)^\tran$ follows the limiting distribution $\gNr(0,(\E|Z_{1,1}|^4-1)\int x^2\dd\nu_C(x) I_m)$. For each $1\le j\le m$, we prove that the inequality 
\begin{equation}
    \sqrt{N}\left(\frac{\tlambda_j}{t_j}-\theta_j\right)<b_j \label{eq:to_rewrite}
\end{equation}
is equivalent to
$$\Yc_j+o_P(1)<b_j$$
for some random variable $\Yc_j$ which is expressed by the entries of $Z$. Then we determine the limiting distribution of $(\Yc_1,\dots,\Yc_m)^\tran$. Then the result follows by using Slutsky's Theorem.

\paragraph{Reformulation of the eigenvalue inequality \eqref{eq:to_rewrite}.} We begin to rewrite the inequality \eqref{eq:to_rewrite}. For further use we temporarily do not suppose that $C$ and $\Gamma$ are diagonal. So this part is shared with Section~\ref{sec:proof_generalclt}. We suppose that $\Gamma=UDU^*$ where $D=\diag(t_1,\dots,t_n)$ and $U=(u_1,\dots,u_n)$ is unitary. By normalizing $C, \Gamma$ and $S$, we assume without loss of generality that $\int x\dd\nu_C(x)=1$ and $a_j=t_j=1$. We set
$$D_{(j)}=\diag(t_1,\dots,t_{j-1},0,t_{j+1},\dots,t_n), \quad \Gamma_{(j)}=UD_{(j)}U^*, \quad\text{and}\quad S_{(j)}=\frac{1}{N}C^{1/2}Z\Gamma_{(j)}Z^*C^{1/2}.$$
Then $\tlambda_j$ satisfies the equation
\begin{equation}
    \det(\tlambda_j I-S_{(j)}-N^{-1}C^{1/2}Zu_ju_j^*Z^*C^{1/2})=0. \label{eq:det_equation}
\end{equation}
Under \ref{ass:spectral_gap}, applying Proposition~\ref{prop:conv_eigen_SN} to both $S$ and $S_{(j)}$, for a small enough $\epsilon>0$, with high probability, for $i=1,\dots,j-1$ we have
$$\tlambda_i, \lambda_i(S_{(j)})\in \left[a_i-\epsilon, a_i+\epsilon\right],$$
and since the $j$th largest eigenvalue of $\Gamma_{(j)}$ is $\lambda_{j+1}(\Gamma_n)$, which tends to $a_{j+1}$, we have
$$\tlambda_j\in \left[1-\epsilon, 1+\epsilon\right], \quad \tlambda_{j+1},\lambda_j(S_{(j)})\in \left[a_{j+1}-\epsilon, a_{j+1}+\epsilon\right].$$

We denote the above evenement by $\Omega$. Suppose that $\Omega$ happens. Then $\tlambda_j$ is not the eigenvalue of $S_{(j)}$, and the equation \eqref{eq:det_equation} is equivalent to
\begin{equation}
    \det(I-N^{-1}C^{1/2}Zu_ju_j^*Z^*C^{1/2}(\tlambda_j I-S_{(j)})^{-1})=0.
\end{equation}
Note that the matrix $N^{-1}C^{1/2}Zu_ju_j^*Z^*C^{1/2}(\tlambda_j I-S_{(j)})^{-1}$ is of rank one, so the equation is in fact
$$\Upsilon(\tlambda_j):=1-\frac{1}{N}u_j^*Z^*C^{1/2}(\tlambda_j I-S_{(j)})^{-1}C^{1/2}Zu_j=0.$$

Moreover, note that $\Upsilon(\lambda)>0$ for $\lambda$ large enough. Note also that
$$\Upsilon(\lambda)=\frac{\det(\lambda I-S)}{\det(\lambda I-S_{(j)})}$$
for $\lambda\in\R$, and with $\Omega$ holds, the denominator and the numerator of $\Upsilon(\lambda)$ change sign $(j-1)$ times respectively on $[1+\epsilon,\infty)$. So we deduce that 
$$\Upsilon\left(1+\epsilon\right)>0,\quad \Upsilon\left(1-\epsilon\right)<0,$$
and $\Upsilon(\lambda)$ changes sign in $[1-\epsilon,1+\epsilon]$ exactly at $\tlambda_j$. Thus for large enough $N,n$ such that 
$$\eta_j:=\theta_j+b_j/\sqrt{N}\in [1-\epsilon,1+\epsilon],$$
the inequality
$$\tlambda_j<\eta_j$$
is equivalent to
\begin{equation}
    \frac{1}{N}u_j^*Z^*C^{1/2}(\eta_jI-S_{(j)})^{-1}C^{1/2}Zu_j<1\,. \label{eq:ineq_equiv_1}
\end{equation}

If it happens that $\Gamma_{(j)}=0$, then $S_{(j)}=0$ and $\theta_j=1$, and the inequality \eqref{eq:ineq_equiv_1} is in fact
\begin{equation}
    \frac{1}{\sqrt{N}}u_j^*Z^*CZu_j-\sqrt{N}<b_j\,. \label{eq:ineq_equiv_1prime}
\end{equation}
Let the LHS of the above inequality be $\Yc_j$, then the procedure of rewriting \eqref{eq:to_rewrite} is complete. 

We now assume that $\Gamma_{(j)}\ne 0$, then we have $\mu^{\Gamma_{(j)}}\ne \delta_0$. We recall some results from \cite{couillet2014analysis}. By Proposition~1.1 of \cite{couillet2014analysis}, for any $z\in\C_+$, the system of equations
\begin{equation}
    \begin{cases}
    g^0_{\Gamma_{(j)}}=\frac{n}{N}\int\frac{t}{z(1-g^0_C t)}\dd\mu^{\Gamma_{(j)}}(t) \\
    g^0_{C}=\int\frac{t}{z(1-g^0_{\Gamma_{(j)}}t)}\dd\mu^{C}(t)
    \end{cases} \label{eq:sys_equation_g}
\end{equation}
has a unique solution $(g^0_{\Gamma_{(j)}}(z),g^0_{C}(z))$ such that $\Im g^0_{\Gamma_{(j)}}(z)<0, \Im g^0_{C}(z)<0$. Define
\begin{equation}
    F(g,z):=\frac{1}{N}\sum_{i=1}^N \frac{c_i}{z-(\frac{1}{N}\sum_{k\ne j}\frac{t_k}{1-gt_k})c_i}-g\,. \label{eq:def_F}
\end{equation}
By plugging the first equation of \eqref{eq:sys_equation_g} into the second one, and replacing the discrete integrals by sums, we can see that $g^0_{C}(z)$ is the unique solution of the equation
$$F(g^0_{C}(z),z)=0,$$
such that $\Im g_{C}(z)<0$. Let
$$s^0_C(z):=\int\frac{1}{z(1-g^0_{\Gamma_{(j)}}(z)t)}\dd\mu^{C}(t), \quad s_{\Gamma_{(j)}}^0(z):=\int\frac{1}{z(1-g^0_C(z)t)}\dd\mu^{\Gamma}(t).$$
Then $s_C^0$ (resp. $s_\Gamma^0$) is the Cauchy-Stieltjes transform of the probability measure $\mu^0_{C}$ (resp. $\mu^0_{\Gamma_{(j)}}$), which is the asymptotic equivalent of $\mu^{S_{(j)}}$ (resp. $\mu^{\frac{1}{N}\Gamma_{(j)}^{1/2}Z^*CZ\Gamma_{(j)}^{1/2}}$). See also (1.6)-(1.10) of \cite{zhidong2016central}. Moreover by Lemma~3.3 of \cite{couillet2014analysis}, for any $x\in\R\backslash\{0\}$, the limit $\lim_{z\in\C_+\to x}g^0_C(z)$ exists. Let $\gamma,\tilde\gamma$ be defined as
$$\gamma(z,g)=\frac{1}{N}\sum_{k\ne j}\frac{t_k^2}{z^2(1-gt_k)^2};$$
$$\tilde\gamma(z,g)=\frac{1}{N}\sum_{i=1}^N\frac{c_i^2}{z^2(1-\frac{c_i}{N}\sum_{k\ne j} \frac{t_k}{1-g t_k})^2}.$$
Then a main result of \cite{couillet2014analysis} says that a non-zero real number $x$ is outside the support of $\mu^0_C, \mu^0_{\Gamma_{(j)}}$, if and only if 
$$1-x^2\gamma(x,g^0_C(x))\tilde\gamma(x,g^0_{\Gamma_{(j)}}(x))>0.$$

Now we relate the function $g^0_C$ with $\theta_j$. By the definition of $\theta_j$, we have 
$$F(1,\theta_j)=0.$$
Because of the assumptions \ref{ass:C_N}, \ref{ass:eigenvalue_conv_Gamma}, also because the distance between $1$ and $\spec(\Gamma_{(j)})$ is bounded away from $0$, there is a complex neighborhood $B(1,\epsilon)$ of $1$ such that
$$\frac{c_i}{N}\sum_{k\ne j}\frac{t_k}{1-gt_k}\to 0$$
uniformly for $g\in B(1,\epsilon)$ and $i=1,\dots,N$. So there is a neighborhood $\mathcal U=B((1,1),\epsilon)\subset \C^2$ of $(1,1)$ ($\epsilon$ may take different values from one place to another) such that for $N,n$ large enough, the function $F$ is holomorphic in $\mathcal U$. Some calculation shows that
$$\frac{\partial F(g,z)}{\partial g}=z^2\gamma(z,g)\tilde\gamma(z,g)-1,$$
which is also holomorphic in $\mathcal U$ for $N,n$ large enough. Moreover  we have
$$\gamma(z,g)\to 0,\quad \tilde\gamma(z,g)\to \frac{1}{z^2}\int x^2\dd\nu_C(x)$$
uniformly for $(g,z)\in\mathcal U$ as $N,n\to\infty$. So for $N,n$ large enough and for $(g,z)\in\mathcal U$, we have
\begin{equation}
    \frac{\partial F(g,z)}{\partial g}<0 \label{eq:ineq_partialF}
\end{equation}
From Remark~\ref{rq:on_equation_G}, we have $\theta_j\to 1$. Whenever $(1,\theta_j)\in\mathcal U$, by holomorphic implicit function theorem \cite[Ch. 1, Th 7.6]{fritzsche2012holomorphic}, there exists a holomorphic function $g:z\mapsto g(z)$, defined in a complex neighborhood $B(\theta_j,\epsilon)$ of $\theta_j$, such that
$$F(g(z),z)=0.$$
Some calculations similar to those between (6)-(8) of \cite{couillet2014analysis} gives
$$\Im g(z)<0$$
for $z\in\C_+$. Then by the unicity of solution of the function $F$ for $z\in\C_+$, we have $g(z)=g^0_C(z)$ for $z\in\C_+\cup B(\theta_j,\epsilon)$. This proves that $g^0_C(\theta_j)=1$. 

Moreover from \eqref{eq:ineq_partialF} and Proposition~3.2 of \cite{couillet2014analysis} we deduce that for $N,n$ large enough, the point $\theta_j$ is in an interval $[1-\epsilon,1+\epsilon]$ who lies outside the support of $\mu^0_C, \mu^0_{\Gamma_{(j)}}$ for $N,n$ large enough. For any $z\in\C\backslash \supp \mu^0_{C}$, we have
\begin{equation}
    \frac{\dd g^0_C(z)}{\dd z}=-\frac{\frac{\partial F}{\partial z}}{\frac{\partial F}{\partial g}}=\frac{1}{z^2\gamma(z,g^0_C(z))\tilde\gamma(z,g^0_C(z))-1}\frac{1}{N}\sum_{i=1}^N \frac{c_i}{\left(z-\frac{c_i}{N}\sum_{k\ne j}\frac{t_k}{1-g^0_C(z)t_k}\right)^2}. \label{eq:derive_g}
\end{equation}

Note that $g^0_C(\theta_j)=1$, we rewrite the inequality \eqref{eq:ineq_equiv_1} as
\begin{equation}
    \frac{1}{N}u_j^*Z^*C^{1/2}(\eta_j I-S_{(j)})^{-1}C^{1/2}Zu_j-g^0_C(\eta_j)<g^0_C(\theta_j)-g^0_C(\eta_j)\,. \label{eq:ineq_equiv_2}
\end{equation}
By Lemma~3.4 of \cite{couillet2014analysis}, $g^0_C(x)\in\R$ for any $x\in\R\backslash\supp\mu^0_C$. Then we can apply Lagrange's Mean Value Theorem and get
\begin{equation}
    g^0_C(\theta_j)-g^0_C(\eta_j)=-\left.\frac{\dd g^0_C(z)}{\dd z}\right|_{z=\xi}\frac{b_j}{\sqrt{N}}, \label{eq:taylor_g}
\end{equation}
where $\xi$ is a number between $\theta_j$ and $\eta_j$. As $\theta_j$ and $\eta_j$ both tend to $1$, we also have $\xi\to1$. Then from the formula \eqref{eq:derive_g}, as $N,n\to\infty$, 
\begin{equation}
    \left.\frac{\dd g^0_C(z)}{\dd z}\right|_{z=\xi}\to -1. \label{eq:limit_dg}
\end{equation}
Plugging \eqref{eq:taylor_g}, \eqref{eq:limit_dg} into \eqref{eq:ineq_equiv_2}, and multiplying the two sides by $\sqrt{N}\eta_j$, also note that $\eta_j\to 1$, the inequality \eqref{eq:ineq_equiv_2} can be written as
\begin{equation}
    \frac{1}{\sqrt{N}}u_j^*Z^*C^{1/2}( I-\eta_j^{-1}S_{(j)})^{-1}C^{1/2}Zu_j-\sqrt{N}\eta_j g^0_C(\eta_j)<b_j+o(1)\,. \label{eq:ineq_equiv_3}
\end{equation}
Using the formula $A^{-1}-B^{-1}=A^{-1}(B-A)B^{-1}$, and letting
\begin{equation}
    Q:=C^{1/2}(\eta_j I-S_{(j)})^{-1}S_{(j)}C^{1/2}, \label{eq:def_Q_proof_dCLT}
\end{equation}
we can rewrite \eqref{eq:ineq_equiv_3} as
\begin{equation}
    \frac{1}{\sqrt{N}}(u_j^*Z^*CZu_j-\tr C)+\frac{1}{\sqrt{N}}u_j^*Z^*QZu_j-\sqrt{N}\eta_j g^0_C(\eta_j)+\frac{\tr C}{\sqrt{N}}<b_j+o(1)\,. \label{eq:ineq_equiv_4}
\end{equation}
Let 
$$\Yc_j=\frac{1}{\sqrt{N}}(u_j^*Z^*CZu_j-\tr C)$$
and 
$$R_n=\frac{1}{\sqrt{N}}u_j^*Z^*QZu_j-\sqrt{N}\eta_j g^0_C(\eta_j)+\frac{\tr C}{\sqrt{N}}.$$
In the following, we prove the CLT for $(\Yc_1,\dots,\Yc_m)^\tran$, and prove that $R_n=o_P(1)$, under the diagonal condition \ref{ass:C_diagonal}, \ref{ass:Gamma_diagonal} in this section, and under \ref{ass:contr_trace_1} or \ref{ass:contr_trace_2} in Section~\ref{sec:proof_generalclt}, respectively.

\paragraph{CLT for $(\Yc_1,\dots, \Yc_m)^\tran$ and estimation of the remainder $R_n$.}We now assume that $C, \Gamma$ are diagonal. Then we have
$$\Yc_j=\frac{1}{\sqrt{N}}\sum_{i=1}^N c_i(|Z_{i,j}|^2-1), \quad \text{for }\, j=1,\dots,m, $$
and by the CLT for independent random variables, we have
$$(\Yc_1,\dots, \Yc_m)^\tran \cvweak[N,n\to\infty]\gNr\left(0,(\E|Z_{1,1}|^4-1)\int x^2\dd\nu_C(x)I_m\right).$$

Now we prove that $R_n=o_P(1)$. Let $\bz_j$ denote the $j$th column of $Z$. As $C, \Gamma$ are diagonal, we have
$$\begin{aligned}R_n & = \frac{1}{\sqrt{N}}\bz_j^*Q\bz_j-\frac{1}{\sqrt{N}}\tr Q+\frac{1}{\sqrt{N}}\tr Q-\sqrt{N}\eta_j g^0_C(\eta_j)+\frac{\tr C}{\sqrt{N}} \\
&= \left(\frac{1}{\sqrt{N}}\bz_j^*Q\bz_j-\frac{1}{\sqrt{N}}\tr Q\right)+\left(\frac{1}{\sqrt{N}}\tr (I-\eta_j^{-1}S_{(j)})^{-1}C -\sqrt{N}\eta_j g^0_C(\eta_j)\right) \\
&= P_1+P_2,\end{aligned}$$
where $Q$ is defined in \eqref{eq:def_Q_proof_dCLT}. Note that $\bz_j$ and $Q$ are independent, then by Lemma~B.1 in \cite{zhidong2016central}, we have
$$\E_Q|P_1|^2\le \frac{\tr QQ^*}{N} K\E|Z_{1,1}|^4.$$
By Proposition~\ref{prop:asympt_eigenvect}, for any $\varepsilon>0$, with high probability, there is only a finite number of eigenvalues of $S_{(j)}$ larger than $\varepsilon$, and the distance between $\eta_j$ and the spectrum of $S_{(j)}$ is bounded away from $0$. So one has
$$\frac{\tr QQ^*}{N}=\frac{\|Q\|_F^2}{N}\le \|C\|^2\|(\eta_j I-S_{(j)})^{-1}\|^2\frac{\|S_{(j)}\|_F^2}{N}=o_P(1),$$
where $\|\cdot\|_F$ denotes the Frobenius norm, and recall that $\|AB\|_F\le \|A\| \|B\|_F$ for any matrices $A,B$. This proves that $\E_Q |P_1|^2=o_P(1)$. We also conclude that $P_1=o_P(1)$, because for any $\varepsilon>0$, 
$$\E_Q\ind_{|P_1|>\varepsilon}\le \varepsilon^{-2}\E_Q |P_1|^2\ind_{|P_1|>\varepsilon}=o_P(1),$$
and by Dominated Convergence Theorem, 
$$\P(|P_1|>\varepsilon)=\E\,\E_Q\ind_{|P_1|>\varepsilon}=o(1).$$

Then we prove that $P_2=o_P(1)$. We set 
$$g_C(z)=\frac{1}{N}\tr (zI-S_{(j)})^{-1}C.$$
Let $0<\epsilon<\min(1-a_{j+1}, a_{j-1}-1)/3$. We note that for $N,n$ large enough, $g^0_C(z)$ is analytic in the complex disc $B(\eta_j,\epsilon)$ of $\eta_j$, and with high probability, $g_C(z)$ is also analytic in this disc. If we can prove that for $w:=\eta_j+\im /n$,
\begin{equation}
    \tilde P_2:=\sqrt{N}g_C(w)-\sqrt{N}g^0_C(w)=o_P(1). \label{eq:P_2_oP1}
\end{equation}
Then, because
$$|P_2/\eta_j-\tilde P_2|\le \sqrt{N}|g_C(\eta_j)-g_C(w)|+\sqrt{N}|g^0_C(\eta_j)-g^0_C(w)|=O(1/\sqrt{n})$$
with high probability, and note that $\eta_j\to 1$, the result follows. The proof of \eqref{eq:P_2_oP1} uses some techniques of \cite[Section~3]{zhidong2016central}, and is postponed to Appendix~\ref{Append:P_2_oP1}.

\section{Proof of Theorem~\ref{th:joint_fluct}}
\label{sec:proof_generalclt}
In this section we extend our result to some non-diagonal $\Gamma$'s. We prove the CLT \eqref{eq:th_joint_fluct} under the condition \ref{ass:contr_trace_1} or \ref{ass:contr_trace_2}. 

We can prove that in any subsequence of the sequence
$$\left(\dlp(\Lambda_m(\Gamma),\gNr(0,\Sigma_m^{(N)}))\right)_{N,n\ge 1}$$
there is still a subsequence converging to $0$. Note that the entries of $\Sigma_m^{(N)}$ are bounded, we can assume that they converge, i.e.
$$\Sigma_m^{(N)}\to \Sigma_m.$$
Then we are led to proving that
$$\Lambda_m(\Gamma)\cvweak[N,n\to\infty]\gNr(0,\Sigma_m).$$

Let $U=(u_1,\dots,u_n)$ and $Y=ZU$. Let $\by_k=Zu_k$ be the $k$th column of $Y$. From the last section, the proof of the theorem can be done by proving that
$$(\Yc_1,\dots,\Yc_m)^\tran\cvweak[N,n\to\infty]\gNr(0,\Sigma_m)$$
where
$$\Yc_j:=\frac{1}{\sqrt{N}}(\by_j^*C\by_j-\tr C)=\frac{1}{\sqrt{N}}\sum_{i=1}^Nc_i(|Y_{i,j}|^2-1),$$
and that for any $j=1,\dots,m$,
$$R_n=\frac{1}{\sqrt{N}}\by_j^*Q\by_j-\sqrt{N}\eta_j g^0_C(\eta_j)+\frac{\tr C}{\sqrt{N}}=o_P(1),$$
where $Q$ is defined in \eqref{eq:def_Q_proof_dCLT}.

We use Cramér-Wold device to prove the CLT of the $m$-dimensional vector $(\Yc_1,\dots,\Yc_m)^\tran$. By a direct calculation, the covariance matrix of $(\Yc_1,\dots,\Yc_m)^\tran$ is exactly $\Sigma_m^{(N)}$ which tends to $\Sigma_m$ as we have assumed. Then for any fixed vector $\alpha:=(\alpha_1,\dots,\alpha_m)^\tran$, we prove that 
\begin{equation}
    \sum_{j=1}^m\alpha_j\Yc_j \cvweak \gNr(0,\alpha^\tran \Sigma_m \alpha). \label{eq:cramer_wold}
\end{equation}
If $\alpha^\tran \Sigma_m \alpha=0$, it means that $\var(\sum_{j=1}^m\alpha_j\Yc_j) =\alpha^\tran \Sigma_m^{(N)} \alpha\to 0$. Then $\sum_{j=1}^m\alpha_j\Yc_j =o_P(1)$ and hence \eqref{eq:cramer_wold} holds. Now we assume that $\alpha^\tran \Sigma_m \alpha\ne 0$ and prove that 
\begin{equation}
    \sum_{j=1}^m\alpha_j\Yc_j\cvweak[N,n\to\infty] \gNr(0,\alpha^\tran \Sigma_m \alpha)\,. \label{eq:clt_lindeberg}
\end{equation}
Note that the rows of $Y$ are i.i.d., then
$$\sum_{j=1}^m\alpha_j\Yc_j=\sum_{i=1}^N c_i\frac{\sum_{j=1}^m a_j(|Y_{i,j}|^2-1)}{\sqrt{N}}$$
is a weighted sum of $N$ i.i.d. random variables. We can use Lindeberg's CLT to prove \eqref{eq:clt_lindeberg}. To do so, we need to verify the Lindeberg condition
$$\frac{1}{N}\sum_{i=1}^N c_i^2\E\left|\sum_{j=1}^m \alpha_j(|Y_{i,j}|^2-1)\right|^2\ind_{\left|\sum_{j=1}^m \alpha_j(|Y_{i,j}|^2-1)\right|>\varepsilon\sqrt{N}}\to 0$$
as $N,n\to\infty$ for any $\varepsilon>0$. Since the quantities in the expectations are identically distributed for different $i$, and since 
$$\frac{1}{N}\sum_{i=1}^N c_i^2\to \int x^2\dd\nu_C(x)\in (0,\infty),$$
we only need to prove 
$$\E\left|\sum_{j=1}^m \alpha_j(|Y_{1,j}|^2-1)\right|^2\ind_{\left|\sum_{j=1}^m \alpha_j(|Y_{1,j}|^2-1)\right|>\varepsilon\sqrt{N}}\to 0.$$

Since $\E|Y_{1,j}|^2=1$ and since $\var |Y_{1,j}|^2$ is uniformly bounded, for any $\varepsilon>0$, the events
$$E_N:=\left\{\left|\sum_{j=1}^m \alpha_j(|Y_{1,j}|^2-1)\right|>\varepsilon\sqrt{N}\right\}$$
occur with low probability. By Minkowski's inequality, we have
$$\left(\E\left|\sum_{j=1}^m \alpha_j(|Y_{1,j}|^2-1)\right|^2\ind_{E_N}\right)^\frac{1}{2}\le \sum_{j=1}^m |\alpha_j|\left(\E\left| |Y_{1,j}|^2-1\right|^2\ind_{E_N}\right)^\frac{1}{2}\,.$$
Since $m$ is a fixed number, we only need to prove that for each $j=1,\dots,m$,
\begin{equation}
    \E\left| |Y_{1,j}|^2-1\right|^2\ind_{E_N}=\E |Y_{1,j}|^4\ind_{E_N}-2\E |Y_{1,j}|^2\ind_{E_N} +\P(E_N)\xrightarrow[N,n\to\infty]{} 0\,. \label{eq:proof:lindeberg_1}
\end{equation}
Since $\P(E_N)\to 0$ and from the uniform boundedness of $\E|Y_{1,j}|^4$ we have $\E |Y_{1,j}|^2\ind_{E_N}\to 0$, then \eqref{eq:proof:lindeberg_1} is equivalent to
$$\E |Y_{1,j}|^4\ind_{E_N}\xrightarrow[N,n\to\infty]{} 0\,.$$
This is a corollary of the following lemma.

\begin{lemma}\label{lem:lindeberg_cdt} Let $v=(v_1,\dots,v_n)\in \C^n$ such that  $\|v\|\le 1$. Let $(Z_k)_{k\ge 1}$ be a sequence of i.i.d. random variables satisfying $\E Z_k=0$, $\E|Z_k|^2=1$, $\E|Z_k|^4<\infty$. Let $E_n$ be a sequence of events such that $\P(E_n)\to 0$, then 
$$\lim_{n\to\infty}\E\left|\sum_{k=1}^n v_kZ_k\right|^4\ind_{E_n} = 0\,.$$
\end{lemma}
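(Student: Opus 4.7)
The plan is to control $\E|S_n|^4\ind_{E_n}$, where $S_n:=\sum_{k=1}^n v_kZ_k$, via a truncation of the $Z_k$'s. The idea is to split each $Z_k$ into a bounded centred part and a tail part: the tail contribution to $\E|S_n|^4$ can be made uniformly small in $n$ by choosing the truncation level large, while the bounded part has uniformly bounded eighth moment, so Cauchy--Schwarz combined with $\P(E_n)\to 0$ kills it.

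Fix $A>0$ and write $Z_k=\tilde Z_k+\hat Z_k$ with
\[
\tilde Z_k:=Z_k\ind_{|Z_k|\le A}-\E[Z_k\ind_{|Z_k|\le A}],\qquad \hat Z_k:=Z_k\ind_{|Z_k|>A}-\E[Z_k\ind_{|Z_k|>A}],
\]
so that $|\tilde Z_k|\le 2A$, $\E\tilde Z_k=\E\hat Z_k=0$, and both $\E|\hat Z_1|^2$ and $\E|\hat Z_1|^4$ tend to $0$ as $A\to\infty$ by dominated convergence (using $\E|Z_1|^4<\infty$). Setting $\tilde S_n:=\sum_k v_k\tilde Z_k$ and $\hat S_n:=\sum_k v_k\hat Z_k$, the elementary inequality $|a+b|^4\le 8(|a|^4+|b|^4)$ gives
\[
\E|S_n|^4\ind_{E_n}\le 8\,\E|\tilde S_n|^4\ind_{E_n}+8\,\E|\hat S_n|^4.
\]

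For the $\hat S_n$ term, expanding $\E|\hat S_n|^4=\E(\hat S_n\overline{\hat S_n})^2$ and using independence together with $\E\hat Z_k=0$ kills every multi-index in which some index has multiplicity exactly one. The surviving terms give
\[
\E|\hat S_n|^4\le C\Bigl(\sum_k|v_k|^4\,\E|\hat Z_1|^4+\bigl(\sum_k|v_k|^2\,\E|\hat Z_1|^2\bigr)^{\!2}\Bigr)\le C\bigl(\E|\hat Z_1|^4+(\E|\hat Z_1|^2)^2\bigr),
\]
using $\sum_k|v_k|^2\le\|v\|^2\le 1$ and $\sum_k|v_k|^4\le(\sum_k|v_k|^2)^2\le 1$. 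Given $\varepsilon>0$, I then fix $A$ large enough that $8\,\E|\hat S_n|^4<\varepsilon/2$ for every $n$.

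For the $\tilde S_n$ term, the same kind of expansion applied to $\E|\tilde S_n|^8$, combined with $\E|\tilde Z_k|^q\le(2A)^q$ for any $q\ge 1$, yields $\sup_n\E|\tilde S_n|^8\le K_A<\infty$, where $K_A$ depends on $A$ alone. Cauchy--Schwarz then delivers
\[
8\,\E|\tilde S_n|^4\ind_{E_n}\le 8\,K_A^{1/2}\,\P(E_n)^{1/2}\xrightarrow[n\to\infty]{}0,
\]
so $\limsup_{n\to\infty}\E|S_n|^4\ind_{E_n}\le\varepsilon/2$, and since $\varepsilon$ is arbitrary the lemma follows. No step presents a genuine obstacle; the only mildly delicate point is the bookkeeping in the moment expansions needed to obtain bounds independent of $n$, but the centring of $\tilde Z_k$ and $\hat Z_k$ reduces this to a standard Rosenthal-type estimate together with the normalization $\|v\|\le 1$.
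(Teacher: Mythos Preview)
Your proof is correct. The truncation of the $Z_k$'s at a fixed level $A$, together with the Rosenthal-type bounds $\E|\hat S_n|^4\le C(\E|\hat Z_1|^4+(\E|\hat Z_1|^2)^2)$ and $\sup_n\E|\tilde S_n|^8\le K_A$, both uniform in $n$ thanks to $\|v\|\le 1$, does exactly what you claim; the final Cauchy--Schwarz step is clean.

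This is a genuinely different route from the paper's. The paper truncates the \emph{coefficients} $v_k$ rather than the random variables: it splits $\sum_k v_kZ_k$ into $P_1=\sum_k v_k\ind_{|v_k|>e_n}Z_k$ and $P_2=\sum_k v_k\ind_{|v_k|\le e_n}Z_k$ for a carefully chosen sequence $e_n\to 0$. The large-coefficient part $P_1$ has at most $1/e_n^2$ terms and is handled via Minkowski and uniform integrability of $|Z_1|^4$; the small-coefficient part $P_2$ is treated by invoking Lindeberg's CLT (after passing to a subsequence so that the variances converge), showing $P_2\cvweak\gNr(0,\sigma^2)$ and then arguing that $\E|P_2|^4\ind_{|P_2|>M}\to\E|\mathbf G|^4\ind_{|\mathbf G|>M}$ for a Gaussian $\mathbf G$. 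Your argument is more elementary and more self-contained: it avoids the CLT, the subsequence extraction, and the limiting uniform-integrability step, replacing all of this with a single eighth-moment bound and Cauchy--Schwarz. The paper's proof, on the other hand, makes the underlying mechanism (asymptotic Gaussianity of the delocalized part) more visible, which is thematically aligned with how the lemma is used.
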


\begin{proof}We only prove the case where $v$ and $Z_k$ are real. For the complex case, it can be easily proved from the real case by separating real and complex parts, and then using Minkowski's inequality.

As all the random variables $|Z_k|^4$ are identically distributed and integrable, they are uniformly integrable. Thus we have
$$\max_{k}\E|Z_k|^4\ind_{E_n}\to 0.$$
Let $(e_n)$ be a sequence of positive numbers tending to $0$ such that 
$$\frac{\max_k\left(\E|Z_k|^4\ind_{E_n}\right)^\frac{1}{4}}{e_n^2}\to 0\quad\text{and}\quad n e_n^2\to \infty\quad\text{as}\quad n\to\infty\,.$$
Note that from $\|v\|\le 1$ we have
$$\#\{k\tq |v_k|>e_n\}\le \frac{1}{e_n^2}.$$
We write
$$\sum_{k=1}^N v_kZ_k=\sum_{k=1}^N v_k\ind_{|v_k|>e_n}Z_k+\sum_{k=1}^N v_k\ind_{|v_k|\le e_n}Z_k=:P_1+P_2\,.$$

By Minkowski's inequality, we have
$$\left(\E|P_1+P_2|^4\ind_{E_n}\right)^\frac{1}{4}\le \left(\E|P_1|^4\ind_{E_n}\right)^\frac{1}{4}+\left(\E|P_2|^4\ind_{E_n}\right)^\frac{1}{4}.$$
For the first part, using again Minkowski's inequality and noting that $|v_k|\le 1$, we get
$$\left(\E|P_1|^4\ind_{E_n}\right)^\frac{1}{4}\le \#\{i\,:\, |v_k|>e_n\}\max_k\left(\E|Z_k|^4\ind_{E_n}\right)^\frac{1}{4}\to 0.$$
For the second part, it suffices to prove that from any subsequence of $\left(\E|P_2|^4\ind_{E_n}\right)_n$ we can extract a subsequence tending to $0$. From any subsequence of $(\sum_{k=1}^N |v_k|^2\ind_{|v_k|<e_n})_n$, there exists a convergent subsequence. So we can assume that 
$$\sum_{k=1}^N |v_k|^2\ind_{|v_k|<e_n}\to \sigma^2\le 1.$$
Then if we can prove that
\begin{equation}
    \lim_{M\to\infty}\varlimsup_{N\to\infty}\E|P_2|^4\ind_{|P_2|>M}=0\,, \label{eq:strong_lindeberg}
\end{equation}
the proof of the lemma will be complete due to the inequality
$$\E|P_2|^4\ind_{E_n}\le \E|P_2|^4\ind_{|P_2|>M}+\E|P_2|^4\ind_{E_n\cap\{|P_2|\le M\}}\le \E|P_2|^4\ind_{|P_2|>M}+M^4\P(E_n).$$ 

To prove \eqref{eq:strong_lindeberg}, by the equality $x^4=\min(x^4, M^4)+ x^4\ind_{|x|>M} -M^4\ind_{|x|>M}$ for any $x\in\R$ and $M>0$, we have
$$\E|P_2|^4\ind_{|P_2|>M}=\E|P_2|^4-\min(\E|P_2|^4, M^4)+M^4\P(|P_2|>M)\,.$$
Let $N\to\infty$, since $\max_k\{|v_k|\ind_{|v_k|\le e_n}\}\le e_n\to 0$, by Lindeberg's Theorem we have $P_2\cvweak \gNr(0,\sigma^2)$. Let $\mathbf G\sim\gNr(0,\sigma^2)$, then we have
$$\E|P_2|^4\to 3\sigma^4=\E|\mathbf G|^4, \quad \min(\E|P_2|^4, M^4)\to \min(\E|\mathbf G|^4, M^4), \quad\text{and}\quad \P(|P_2|>M)\to \P(|\mathbf G|>M),$$
where the first convergence is from direct calculation, the second and the third are from the fact that $P_2\cvweak G$ and that the function $x\mapsto \min(x^4, M^4)$ is continuous and bounded. So we have
$$\lim_{N\to\infty}\E|P_2|^4\ind_{|P_2|>M}=\E|\mathbf G|^4\ind_{|\mathbf \mathbf G|>M}\,.$$
Finally we take $M\to\infty$ and see that \eqref{eq:strong_lindeberg} holds.
\end{proof}

Next we prove that
$$R_n=\frac{1}{\sqrt{N}}u_j^*Z^*QZu_j-\sqrt{N}\eta_j g^0_C(\eta_j)+\frac{\tr C}{\sqrt{N}}=o_P(1)$$
under the conditions \ref{ass:C_diagonal} and \ref{ass:contr_trace_1} or \ref{ass:contr_trace_2}, where $Q$ is defined in \eqref{eq:def_Q_proof_dCLT}.

We assume that \ref{ass:C_diagonal} and \ref{ass:contr_trace_1} hold. From the equation satisfied by $g^0_C$, we have
$$\begin{aligned}\sqrt{N}\eta_j g^0_C(\eta_j)-\frac{\tr C}{\sqrt{N}} &= \frac{1}{\sqrt{N}}\sum_{k\ne j}\frac{t_k}{1-g^0_C(\eta_j)t_k}\frac{1}{N}\sum_{i=1}^N\frac{c_i^2}{\eta_j-(\frac{1}{N}\sum_{k\ne j}^n\frac{t_k}{1-g^0_C(\eta_j)t_k})c_i} \\ 
 &= o(1).\end{aligned}$$
Now it suffices to prove that
$$\frac{1}{\sqrt{N}}u_j^*Z^*C^{1/2}\left[(\eta_j I-S_{(j)})^{-1}S_{(j)}\right]C^{1/2}Zu_j=o_P(1).$$
Note that with high probability, we have
$$\|(\eta_j I-S_{(j)})^{-1}\|\le K$$
for some $K>0$. Then the matrices
$$KS_{(j)}-(\eta_j I-S_{(j)})^{-1}S_{(j)} \quad\text{and}\quad KS_{(j)}+(\eta_j I-S_{(j)})^{-1}S_{(j)}$$
are both positive semi-definite. Then we have
$$\begin{aligned} & \left|\frac{1}{\sqrt{N}}u_j^*Z^*C^{1/2}\left[(\eta_j I-S_{(j)})^{-1}S_{(j)}\right]C^{1/2}Zu_j\right| \\
\le & \frac{K}{\sqrt{N}}u_j^*Z^*C^{1/2}S_{(j)}C^{1/2}Zu_j 
= \frac{K}{N\sqrt{N}}u_j^*Z^*CZ\Gamma_{(j)}Z^*CZu_j.\end{aligned}$$
To prove that this is $o_P(1)$, it suffices to prove that
\begin{equation}
    \frac{1}{N\sqrt{N}}\E u_j^*Z^*CZ\Gamma_{(j)}Z^*CZu_j=o(1). \label{eq:proof_expect1_o1}
\end{equation}
Denote the entries of $\Gamma_{(j)}$ by $\Gamma_{i,k}$. By a simple calculation, one has
\begin{equation}
\begin{aligned} &\frac{1}{N\sqrt{N}}\E u_j^*Z^*CZ\Gamma_{(j)}Z^*CZu_j \\
= & \frac{1}{N\sqrt{N}}\left(\sum_{i,k}|u_{j,k}|^2c_i^2\Gamma_{k,k}(\E|Z_{1,1}|^4-|\E Z_{1,1}^2|^2-2)+\sum_{i_1,i_2,k_1,k_2}c_{i_1}c_{i_2}\overline{u}_{j,k_1}\Gamma_{k_1,k_2}u_{j,k_2} \right. \\
 & \left. +\sum_{i,k_1,k_2}c_i^2\overline{u}_{j,k_1}\Gamma_{k_2,k_1}u_{j,k_2}|\E Z_{1,1}^2|^2+\sum_{i,k}c_i^2\Gamma_{k,k}\right).\end{aligned} \label{eq:term1_expect}    
\end{equation}
Note that
\begin{equation}
    \sum_{k_1,k_2}\overline{u}_{j,k_1}\Gamma_{k_1,k_2}u_{j,k_2}=u_j^*\Gamma_{(j)}u_j=0, \label{eq:orthogonal_uGamma}
\end{equation}
so the second term of the above sum is zero. Also, if we use $\overline{u}_j$ to denote the conjugate of the vector $u_j$, we have
$$\left|\sum_{k_1,k_2}\overline{u}_{j,k_1}\Gamma_{k_2,k_1}u_{j,k_2}\right|=|u_j^\tran \Gamma_{(j)}\overline{u}_j|=\left|\sum_{k\ne j} t_k(u_j^\tran u_ku_k^*\overline{u}_j)\right|\le \tr \Gamma_{(j)}.$$
Therefore by \ref{ass:C_N} and \ref{ass:contr_trace_1}, the limit \eqref{eq:proof_expect1_o1} is proved.

We now assume that \ref{ass:contr_trace_2} and \ref{ass:C_diagonal} hold. Using the formula $A^{-1}-B^{-1}=A^{-1}(B-A)B^{-1}$, and the inequality
$$\frac{1}{N^{3/2}}\left(\sum_{k=1}^n t_k\right)^2\le \frac{n}{N^{3/2}}\sum_{k=1}^n t_k^2,$$
we can write
$$\begin{aligned} R_n = & \left(\frac{1}{N^{3/2}\eta_j}u_j^*Z^*CZ\Gamma_{(j)}Z^*CZu_j- \frac{1}{N^{3/2}\eta_j}\sum_{i,k} t_k c_i^2\right) \\
 & + \frac{1}{\sqrt{N}\eta_j}u^*_jZ^*C^{1/2}(\eta_jI-S_{(j)})^{-1}S_{(j)}^2C^{1/2}Zu_j+O(\frac{1}{\sqrt{N}}\sum_k t_k^2) \\
 =:& P'_1+P'_2+o(1).\end{aligned}$$
The calculation \eqref{eq:term1_expect} also shows that
$$\begin{aligned}|\E P'_1|\le & \frac{K}{\sqrt{N}}\sum_{k=1}^n|u_{j,k}|^2\Gamma_{k,k}+\frac{K}{\sqrt{N}}\sum_{k_1,k_2}|\overline{u}_{j,k_1}u_{j,k_2}\Gamma_{k_2,k_1}| \\
    \le & \frac{K}{\sqrt{N}}\left(\sum_{k=1}^n|u_{j,k}|^4 \sum_{k=1}^n\Gamma_{k,k}^2\right)^{1/2} + \frac{K}{\sqrt{N}}\left(\sum_{k_1,k_2}|\overline{u}_{j,k_1}|^2|u_{j,k_2}|^2\sum_{k_1,k_2}|\Gamma_{k_2,k_1}|^2\right)^{1/2} \\
    =& o(1).\end{aligned}$$
To prove $P'_1=o_P(1)$, it suffices to prove that 
\begin{equation}
    \frac{1}{N^3}\var\left(u_j^*Z^*CZ\Gamma_{(j)}Z^*CZu_j\right)=o(1). \label{eq:variation_o1}
\end{equation}
Recall that $Y=Z(u_1,\dots,u_n)$ and $\by_k=Zu_k$. Note that the rows of $Y$ are independent, and that $\by_1,\dots,\by_n$ are decorrelated. Let $D_{(j)}=\diag(t_1,\dots,t_{j-1},0,t_{j+1},\dots,t_n)$. Then by some calculation, we have
$$\begin{aligned}     
 &\frac{1}{N^3}\var\left(\by_j^*CYD_{(j)}Y^*C\by_j\right) \\
=& \frac{1}{N^3}\left(\E(\by_j^*CYD_{(j)}Y^*C\by_j)^2-(\E \by_j^*CYD_{(j)}Y^*C\by_j)^2\right)\\
=& \frac{1}{N^3}\left(\E\left(\sum_{k_1,k_2\ne j}t_{k_1}t_{k_2}\by_j^*C\by_{k_1} \by_{k_1}^*\by_j\by_j^*C\by_{k_2} \by_{k_2}^*\by_j\right)-\left(\E\sum_{k\ne j}t_{k}\by_j^*C\by_k \by_k^*C\by_j\right)^2\right) \\
\le & \frac{1}{N^2} \sum_{k_1,k_2\ne j}t_{k_1}t_{k_2}\E|Y_{1,j}|^4|Y_{1,k_1}|^2|Y_{1,k_2}|^2 \frac{\tr C^2}{N} + \frac{1}{N}\sum_{k_1,k_2\ne j}t_{k_1}t_{k_2}|\E \overline{Y_{1,j}}^2Y_{1,k_1}Y_{1,k_2}|^2(\frac{\tr C}{N})^2 \\
    & +\frac{1}{N}\sum_{k_1,k_2\ne j}t_{k_1}t_{k_2}|\E|Y_{1,j}|^2Y_{1,k_1}\overline{Y_{1,k_2}}|^2(\frac{\tr C}{N})^2 \,.
\end{aligned}$$
Using Lemma~2.1 in \cite{bai1998no} and Holder's inequality,  $\E|Y_{1,j}|^4|Y_{1,k_1}|^2|Y_{1,k_2}|^2$ is uniformly bounded. Also we have $\frac{1}{N^2} \sum_{k_1,k_2\ne j}t_{k_1}t_{k_2}=\left(\frac{1}{N}\sum_{k\ne j}t_k\right)^2\to 0$, so the first term of the above sum is negligible. We only need to prove that
\begin{equation}
    \frac{1}{N}\sum_{k_1,k_2\ne j}t_{k_1}t_{k_2}|\E Y_{1,j}^2\overline{Y_{1,k_1}Y_{1,k_2}}|^2\to 0, \quad\text{and}\quad \frac{1}{N}\sum_{k_1,k_2\ne j}t_{k_1}t_{k_2}|\E|Y_{1,j}|^2Y_{1,k_1}\overline{Y_{1,k_2}}|^2\to 0\,. \label{eq:two_small1}
\end{equation}
We now prove the first convergence. By simple algebra, we have
$$\E \overline{Y_{1,j}}^2Y_{1,k_1}Y_{1,k_2}=\sum_{i=1}^n \overline{u_{j,i}}^2u_{k_1,i}u_{k_2,i}(\E|Z_{1,1}|^4-|\E Z_{1,1}^2|^2-2)+\left(\sum_{j=1}^n\overline{u_{j,i}}^2\right)\left(\sum_{i=1}^n u_{k_1,i}u_{k_2,i}\right)|\E Z_{1,1}^2|^2\,.$$
By the elementary inequality $|a+b|^2\le 2|a|^2+2|b|^2$, we only need to prove
\begin{equation}
    \frac{1}{N}\sum_{k_1,k_2\ne j}t_{k_1}t_{k_2}\left|\sum_{i=1}^n \overline{u_{j,i}}^2u_{k_1,i}u_{k_2,i}\right|^2\to 0, \quad\text{and}\quad \frac{1}{N}\sum_{k_1,k_2\ne j}t_{k_1}t_{k_2}\left|\sum_{i=1}^n u_{k_1,i}u_{k_2,i}\right|^2\to 0\,. \label{eq:two_small2}
\end{equation}
To prove the first convergence in \eqref{eq:two_small2}, we have
\begin{equation}
    \begin{aligned}\frac{1}{N}\sum_{k_1,k_2\ne j}t_{k_1}t_{k_2}\left|\sum_{i=1}^n \overline{u_{j,i}}^2u_{k_1,i}u_{k_2,i}\right|^2 & \le \frac{t_1^2}{N}\sum_{k_1,k_2}\left|\sum_{i=1}^n \overline{u_{j,i}}^2u_{k_1,i}u_{k_2,i}\right|^2 \\
    &= \frac{t_1^2}{N}\sum_{k_1,k_2}\sum_{i_1,i_2} \overline{u_{j,i_1}}^2u_{j,i_2}^2u_{k_1,i_1}\overline{u_{k_1,i_2}}u_{k_2,i_1}\overline{u_{k_2,i_2}} \\
    &= \frac{t_1^2}{N}\sum_{i_1,i_2} \overline{u_{j,i_1}}^2u_{j,i_2}^2\left(\sum_{k=1}^N u_{k,i_1}\overline{u_{k,i_2}}\right)^2 \\
    &=\frac{t_1^2}{N}\sum_{i=1}^n |u_{j,i}|^4\to 0\,.\end{aligned}
    \label{eq:proof:var_1_1_1}
\end{equation}
To prove the second convergence in \eqref{eq:two_small2}, we take an arbitrary small $\varepsilon>0$. Then by the assumption \ref{ass:eigenvalue_conv_Gamma}, the number of $t_k$ larger than $\varepsilon$ is finite. So we have
$$\begin{aligned}\frac{1}{N}\sum_{k_1,k_2\ne j}t_{k_1}t_{k_2}\left|\sum_{i=1}^n u_{k_1,i}u_{k_2,i}\right|^2 &\le \frac{\varepsilon^2}{N}\sum_{k_1,k_2}\left|\sum_{i=1}^n u_{k_1,i}u_{k_2,i}\right|^2 +O(N^{-1}) \\
& = \frac{\varepsilon^2}{N}\sum_{i_1,i_2}\left(\sum_{k=1}^N u_{k,i_1}\overline{u_{k,i_2}}\right)^2 +O(N^{-1})\\
& =\varepsilon^2+O(N^{-1})\,. \end{aligned}$$
Thus we have proved \eqref{eq:two_small2}, implying the first convergence of \eqref{eq:two_small1}. 

We then prove the second part of \eqref{eq:two_small1}. We have 
\begin{eqnarray*}
\lefteqn{ \E |Y_{1,j}|^2 Y_{1,k_1}\overline{Y_{1,k_2}} } \\ 
&=& \sum_{i=1}^n |u_{j,i}|^2 \overline{u_{k_1,i}}u_{k_2,i}(\E|Z_{1,1}|^4-|\E Z_{1,1}^2|^2-2)+\delta_{k_1,k_2} \\
 & & +\left(\sum_{i=1}^n\overline{u_{j,i}u_{k_1,i}}\right)\left(\sum_{i=1}^n u_{j,i}u_{k_2,i}\right)|\E Z_{1,1}^2|^2\,,
\end{eqnarray*}
where $\delta_{k_1,k_2}$ is the Kronecker symbol. Using the inequality $|a+b+c|^2\le 3(|a|^2+|b|^2+|c|^2)$ we only need to prove 
\begin{equation}
    \frac{1}{N}\sum_{k_1,k_2\ne j}t_{k_1}t_{k_2}\left|\sum_{i=1}^n |u_{j,i}|^2 \overline{u_{k_1,i}}u_{k_2,i}\right|^2\to 0\,,\quad  \frac{1}{N}\sum_{k\ne j}t_k^2\to 0\,, \label{eq:proof:var_2_12}
\end{equation}
and
\begin{equation}
    \frac{1}{N}\sum_{k_1,k_2\ne j}t_{k_1}t_{k_2}\left|\left(\sum_{i=1}^n\overline{u_{j,i}u_{k_1,i}}\right)\left(\sum_{i=1}^n u_{j,i}u_{k_2,i}\right)\right|^2\to 0\,.\label{eq:proof:var_2_3}
\end{equation}
The first of \eqref{eq:proof:var_2_12} can be proved similarly as \eqref{eq:proof:var_1_1_1}, the second of \eqref{eq:proof:var_2_12} is a consequnce of \ref{ass:contr_trace_2}. To prove \eqref{eq:proof:var_2_3}, we have
\begin{eqnarray*}
\lefteqn{ \frac{1}{N}\sum_{k_1,k_2\ne j}t_{k_1}t_{k_2}\left|\left(\sum_{i=1}^n\overline{u_{j,i}u_{k_1,i}}\right)\left(\sum_{i=1}^n u_{j,i}u_{k_2,i}\right)\right|^2 } \\
&\le & \frac{t_1^2}{N}\sum_{k_1,k_2}\sum_{i_1,i_2,i_3,i_4}\overline{u_{j,i_1}u_{k_1,i_1}}u_{j,i_2}u_{k_1,i_2} u_{j,i_3}u_{k_2,i_3}\overline{u_{j,i_4}u_{k_2,i_4}} \\
& = & \frac{t_1^2}{N}\sum_{i_1,i_2,i_3,i_4}\overline{u_{j,i_1}}u_{j,i_2}u_{j,i_3}\overline{u_{j,i_4}}\left(\sum_{k=1}^N\overline{u_{k,i_1}}u_{k,i_2}\right) \left(\sum_{k=1}^N u_{k,i_3}\overline{u_{k,i_4}}\right) \\
& = & \frac{t_1^2}{N}\left(\sum_{i=1}^n |u_{j,i}|^2\right)^2 = \frac{t_1^2}{N}\to 0\,.
\end{eqnarray*}
Then $P'_1=o_P(1)$ is proved.

To prove that $P'_2=o_P(1)$, using the same argument leading to  \eqref{eq:proof_expect1_o1}, we only need to prove that
\begin{equation}
    \frac{1}{N^2\sqrt{N}}\E u_j^*Z^*CZ\Gamma_{(j)}Z^*CZ\Gamma_{(j)}Z^*CZu_j=o(1). \label{eq:proof_expect2_o1}
\end{equation}
Using the same notations as proving \eqref{eq:variation_o1}, by simple algebra, we have
\begin{equation}\begin{aligned}
\frac{1}{N^2\sqrt{N}} \E \by_j^*CYD_{(j)}Y^*CYD_{(j)}Y^*C\by_j \le & \frac{1}{N^{3/2}}\sum_{k}t_{k}^2\E|Y_{1,j}|^2|Y_{1,k}|^4 \frac{\tr C^3}{N} \\
  & + \frac{1}{N^{1/2}}\sum_{k}t_{k}^2\E|Y_{1,j}|^2|Y_{1,k}|^2 \frac{\tr C\tr C^2}{N^2} \\
    &+\frac{1}{N^{3/2}}\sum_{k_1,k_2}t_{k_1}t_{k_2}\E|Y_{1,i}|^2|Y_{1,k_1}|^2|Y_{1,k_2}|^2\frac{\tr C^3}{N} \\
    =& o(1).
\end{aligned}\label{eq:proof:expect_S2}\end{equation}
Then we have $P'_2=o_P(1)$. 

\begin{Rq}
As we have seen, the main difficulty of this proof is the convergence to $0$ in probability of a quadratic form $\by^*Q\by$ where $\by$ and $Q$ are not independent. In the expansion e.g. \eqref{eq:orthogonal_uGamma} or other expansions afterwards, we can see that the orthogonal relation between $u_j$ and $\Gamma_{(j)}$ is crucial for the result. Up to the date of submission of this article and to our best knowledge, we have not found any method other than moment expansions which can achieve the same or stronger result. The method used in \cite[Theorem~2.4]{cai2017limiting} could be a possible option. But instead of the orthogonality between $u_j$ and $\Gamma_{(j)}$, the proof in \cite{cai2017limiting} relies on the clear separation between spiked and non-spiked population eigenvalues and the quicker speed of the non-spiked eigenvalues converging to $0$, which is not satisfied by our model.

Potentially our method works also in the case of non-diagonal $C_N$, and more general $\Gamma_n$, especially for the Toeplitz matrices $T_n$ with parameter $\rho\in(-1,-3/4]$. But restricted to the complexity of proof and the length of the paper we may proceed in this direction in the forthcoming works.
\end{Rq}

\appendix
\section{Proof of \eqref{eq:P_2_oP1}}
\label{Append:P_2_oP1}
We write
$$\tilde P_2=\sqrt{N}(g_C(w)-\E g_C(w)) + \sqrt{N}(\E g_C(w)-g^0_C(w))=:P_{21}+P_{22}.$$
To prove that $P_{21}=o_P(1)$, we use the martingale decomposition. By reassigning $t_j=0$, we continue to denote the eigenvalues of $\Gamma_{(j)}$ by $t_1,\dots,t_n$. Let $\by_k$ be the $k$th column of $C^{1/2}Z$. Let $\E_k$ denote $\E_{\by_1,\dots,\by_k}$ and $\E_0=\E$. 
We denote
$$S=\frac{1}{N}\sum_{i=1}^n t_i\by_i\by_i^*,\quad S_{(k)}=\frac{1}{N}\sum_{i\ne k} t_i\by_i\by_i^*$$
$$D(w):=wI-S, \quad D_k(w):=wI-S_{(k)},$$
and
$$\zeta_k(w):=\by_k^*D_k(w)^{-1}\by_k-\tr D_k^{-1}(w)C, \quad \xi_k(w):=\by_k^*D_k^{-1}(w)CD_k^{-1}(w)\by_k - \tr(D_k^{-1}(w)C)^2,$$
$$\beta_k(w):=\frac{1}{1-N^{-1}t_k\by_k^*D_k^{-1}(w)\by_k}, \quad \tilde\beta_k(w):=\frac{1}{1-N^{-1}t_k\tr D_k^{-1}(w)C},$$
$$\phi_k(w):=\frac{1}{1-N^{-1}t_k\E\tr(D_k^{-1}(w)C)}, \quad \psi_k(w):=\frac{1}{1-N^{-1}t_k\E\tr(D^{-1}(w)C)}.$$

\begin{lemma}\label{lem:expect_bounds} Under the conditions of Theorem~\ref{th:diag_clt} with $Z_{i,j}$ satisfying \ref{ass:bound_cdt}, for any $p\ge 1$, there exists a constant $K_p$ such that for large enough $N,n$, we have
$$\E\|D^{-1}(w)\|^p\le K_p, \E\|D_k^{-1}(w)\|^p\le K_p, \E|\beta_k(w)|^p\le K_p, \E|\tilde\beta_k(w)|^p\le K_p;$$
$$|\phi_k(w)|\le K_p, \quad |\psi_k(w)|\le K_p.$$
\end{lemma}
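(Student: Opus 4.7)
The plan is to combine a crude deterministic bound of order $|w|/\Im w = O(n)$ with a sharper $O(1)$ bound on a favourable event $\Omega_n$ whose complement has vanishingly small probability. I will build the favourable event from Lemma~\ref{lemma:high_proba_concentration}, applied simultaneously to $S$ (which, after the reassignment $t_j=0$, coincides with $S_{(j)}$ of the main proof) and to every $S_{(k)}$. Combining its conclusion with the spectral gap assumption \ref{ass:spectral_gap} and $\eta_j \to 1$, a fixed $\varepsilon>0$ can be chosen so that the event
$$\Omega_n := \{\mathrm{dist}(\eta_j,\, \spec(S)\cup\textstyle\bigcup_k\spec(S_{(k)})) \ge \varepsilon\}$$
satisfies $\P(\Omega_n^c) = o(n^{-M})$ for every fixed $M>0$, and on $\Omega_n$ the bounds $\|D^{-1}(w)\|, \|D_k^{-1}(w)\| \le 2/\varepsilon$ are immediate.

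The deterministic $O(n)$ bounds will all come from positivity of suitable imaginary parts. Writing $w(1 - N^{-1} t_k \by_k^* D_k^{-1}(w) \by_k) = w - N^{-1}t_k(\|\by_k\|^2 + \by_k^* S_{(k)} D_k^{-1}(w) \by_k)$ and expanding $\by_k^* S_{(k)} D_k^{-1}(w) \by_k$ in the eigenbasis of the positive semidefinite matrix $S_{(k)}$ shows that $\Im(\by_k^* S_{(k)} D_k^{-1}(w) \by_k) \le 0$; hence the imaginary part of the right-hand side is at least $\Im w$, yielding $|\beta_k(w)| \le |w|/\Im w$. The same computation with $\tr(S_{(k)} D_k^{-1}(w) C)$, its expectation, or the analogous expression with $S$ in place of $S_{(k)}$ will give $|\tilde\beta_k(w)|, |\phi_k(w)|, |\psi_k(w)| \le |w|/\Im w$. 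Since $\|D^{-1}(w)\|, \|D_k^{-1}(w)\| \le 1/\Im w$ are trivial, all six quantities are at most $O(n)$ almost surely.

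On $\Omega_n$ I will sharpen these estimates to $O(1)$ by identifying the limits of the denominators. Standard quadratic-form concentration applied to $\zeta_k(w) = \by_k^* D_k^{-1}(w)\by_k - \tr(D_k^{-1}(w) C)$ (for instance Lemma~B.1 of \cite{zhidong2016central}), combined with $\|D_k^{-1}(w)\| = O(1)$ on $\Omega_n$, will give $\E|\zeta_k|^p = O(N^{p/2})$, so $N^{-1}\zeta_k = o_P(1)$; and the deterministic-equivalent theory of separable covariance models, together with the identity $g^0_C(\theta_j) = 1$ derived in the main text and the continuity of $g^0_C$ at $\eta_j$, will give $N^{-1}\tr(D_k^{-1}(w) C) \to g^0_C(\eta_j) = 1$. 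Since $t_k \to a_k$ with $a_k\ne 1$ for $k\ne j$ (by the spectral gap) and $t_j=0$, the denominator $|1 - N^{-1}t_k \by_k^* D_k^{-1}(w)\by_k|$ is bounded below by a positive constant on $\Omega_n$ for large $n$, giving $|\beta_k(w)|, |\tilde\beta_k(w)| = O(1)$ on $\Omega_n$. Taking expectations and splitting again on $\Omega_n$ and $\Omega_n^c$ (the contribution of the complement being $o(1)$ via the deterministic $O(n)$ bound) will similarly force $N^{-1}\E\tr(D_k^{-1}(w) C), N^{-1}\E\tr(D^{-1}(w) C) \to 1$, hence $|\phi_k(w)|, |\psi_k(w)| = O(1)$. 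Assembling the pieces,
$$\E|\beta_k(w)|^p \le K^p + \bigl(|w|/\Im w\bigr)^p \P(\Omega_n^c) = K^p + O(n^p)\cdot o(n^{-M}),$$
which is $O(1)$ by choosing $M>p$, and the same argument handles $\tilde\beta_k$, $\|D^{-1}\|$ and $\|D_k^{-1}\|$. The main obstacle will be the careful bookkeeping needed to ensure that every concentration estimate holds with \emph{overwhelming} (not merely high) probability, so that the polynomial deterministic bound on $\Omega_n^c$ is swamped in any fixed $L^p$ norm; this is precisely the role of the bound condition \ref{ass:bound_cdt} and of Lemma~\ref{lemma:high_proba_concentration}.
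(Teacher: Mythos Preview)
Your overall architecture matches the paper's: both split into a favourable event of overwhelming probability (built from Lemma~\ref{lemma:high_proba_concentration}) and use a crude polynomial bound on the complement. Your deterministic bounds $|\beta_k|,|\tilde\beta_k|,|\phi_k|,|\psi_k|\le |w|/\Im w$ via the positivity argument are correct and slightly cleaner than what the paper writes. The treatment of $\|D^{-1}\|$, $\|D_k^{-1}\|$, $\tilde\beta_k$, $\phi_k$, $\psi_k$ goes through essentially as you describe, once one inserts the decomposition $N^{-1}\tr D_k^{-1}(w)C=(Nw)^{-1}\tr C+(Nw)^{-1}\tr S_{(k)}D_k^{-1}(w)C$ and bounds the second piece on $\Omega_n$ by $\|C\|N^{-1}\sum_j|\lambda_j/(w-\lambda_j)|=o(1)$.

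There is, however, a genuine gap in your handling of $\beta_k$. Your event $\Omega_n$ only controls the spectra of $S$ and $S_{(k)}$; it says nothing about the fluctuation $\zeta_k=\by_k^*D_k^{-1}(w)\by_k-\tr D_k^{-1}(w)C$. The moment bound $\E|\zeta_k|^p=O(N^{p/2})$ from Lemma~B.1 gives $N^{-1}\zeta_k=o_P(1)$, but it does \emph{not} give $|N^{-1}\zeta_k|<\varepsilon$ on $\Omega_n$, nor does it give $\P(|N^{-1}\zeta_k|>\varepsilon)=o(n^{-M})$ for every $M$: the second term in Lemma~B.1 contributes $\E|Z_{1,1}|^{2q}\tr(AA^*)^{q/2}\asymp \varepsilon_n^{2q-6}n^{q-2}$, so after Markov and multiplication by the crude bound $(|w|/\Im w)^p=O(n^p)$ you are left with $\varepsilon_n^{2q-6}n^{p-2}$, which need not vanish without an unstated rate on $\varepsilon_n$. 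Hence the step ``the denominator $|1-N^{-1}t_k\by_k^*D_k^{-1}(w)\by_k|$ is bounded below on $\Omega_n$'' is not justified.

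The paper avoids this entirely by not bounding the denominator of $\beta_k$ at all. It uses the Sherman--Morrison identity
\[
\frac{1}{N}t_k\,\by_k^*D^{-1}(w)\by_k=\beta_k(w)-1,
\]
so that $|\beta_k(w)|\le 1+\|D^{-1}(w)\|\cdot N^{-1}t_k\|\by_k\|^2$. The second factor is estimated by $N^{-1}t_k\|\by_k\|^2=\lambda_1(S-S_{(k)})\le \lambda_1(S)+\lambda_1(S_{(k)})$, which \emph{is} controlled on $\Omega_n$ by Lemma~\ref{lemma:high_proba_concentration} (and is $O(n)$ deterministically under \ref{ass:bound_cdt}). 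A Cauchy--Schwarz then finishes. If you replace your denominator argument for $\beta_k$ by this identity, the rest of your sketch is sound.
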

\begin{proof}
The first two inequalities are due to the fact that with overwhelming probability the distance from $w$ to the spectra of $S$ and $S_{k}$ are bounded away from zero uniformly in $n,k$ (Lemma~\ref{lemma:high_proba_concentration}), so $\|D^{-1}(w)\|$ and $\|D_k^{-1}(w)\|$ are uniformly bounded with overwhelming probability; and with tiny probability, we use the general bound
$$\|D^{-1}(w)\|\le \|C\|\Im w = \|C\|n, \quad \|D_k^{-1}(w)\|\le  \|C\|\Im w = \|C\|n.$$
Therefore the expectations $\E\|D^{-1}(w)\|^p$, $\E\|D_k^{-1}(w)\|^p$ are uniformly bounded.

For the third one, the proof is identical to the proof of  Lemma~A.3 in \cite{zhidong2016central}, up to some adaptions. The first steps of the proof of Lemma~A.3 in \cite{zhidong2016central} give
$$\frac{1}{N}t_k\by_k^*D^{-1}(w)\by_k=\beta_k(w)-1.$$
Then we have
$$|\beta_k(w)|\le 1+\|D^{-1}(w)\| |\frac{1}{N}t_k\by_k^*\by_k|.$$
To prove that $\E|\frac{1}{N}t_k\by_k^*\by_k|^p$ is uniformly bounded for any fixed $p\ge 1$, with overwhelming probability, we can use the bound
$$|\frac{1}{N}t_k\by_k^*\by_k|=\left|\max_{\|f\|=1}f^*(S-S_{(k)})f\right|\le \left|\max_{\|f\|=1}f^*Sf\right|+\left|\max_{\|f\|=1}f^*S_{(k)} f\right|\le Ka_1;$$
and with tiny probability we use the general bound
$$|\frac{1}{N}t_k\by_k^*\by_k|\le Kn.$$
Then we have
$$\E|\beta_k(w)|^p \le 1+K\E\|D^{-1}(w)\|^p |\frac{1}{N}t_k\by_k^*\by_k|^p\le 1+K_p\E^{1/2}\|D^{-1}(w)\|^{2p} \E^{1/2}|\frac{1}{N}t_k\by_k^*\by_k|^{2p}\le K_p.$$

For the fourth one, we write
$$\tilde\beta_k(w)=\frac{1}{1-(Nw)^{-1}t_k\tr C-(Nw)^{-1}t_k\tr D_k^{-1}(w)S_{(k)}C}.$$
Note that for large enough $N,n$, $(Nw)^{-1}t_k\tr C$ has a positive distance to $1$. We now prove that with overwhelming probability, the term $(Nw)^{-1}t_k\tr D_k^{-1}(w)S_{(k)}C$ is small enough so that the denominator of $\tilde\beta_k(w)$ is bounded away from $0$.

Let $\lambda_1,\dots,\lambda_N$ be the eigenvalues of $S_{(k)}$, with eigenvectors $v_1,\dots,v_N$ where $v_i=(v_{1,i},\dots,v_{N,i})^\tran$. Recall that $C=\diag(c_1,\dots,c_N)$. Then
\begin{equation}
    \left|\frac{1}{N}\tr D_k^{-1}(w)S_{(k)}C\right|\le \frac{1}{N}\sum_{j=1}^N\left|\frac{\lambda_j}{w-\lambda_j}\right|\sum_{i=1}^N c_i|v_{i,j}|^2\le \frac{\|C\|}{N}\sum_{j=1}^N\left|\frac{\lambda_j}{w-\lambda_j}\right|. \label{eq:tr_DBC_o1}
\end{equation}
Then by Lemma~\ref{lemma:high_proba_concentration}, with overwhelming probability, the above quantity is smaller than any fixed $\varepsilon>0$. 

With tiny probability, the above estimation does not hold. For the estimation of expectation, we should find a new estimation of $\tilde\beta_k(w)$. Note that if $|N^{-1}t_k\tr D_k^{-1}(w)C|\le 1/2$, then $|\tilde\beta_k(w)|\le 2$; else if $|N^{-1}t_k\tr D_k^{-1}(w)C|>1/2$ we have
$$\frac{1}{2}|\tilde\beta_k(w)|\le |N^{-1}t_k\tr D_k^{-1}(w)C||\tilde\beta_k(w)| \le \frac{|\tr D_k^{-1}(w)C|}{|\Im \tr D_k^{-1}(w)C|}\le \frac{2\|C\|n^2\max_{i}|w-\lambda_i|^2}{N^{-1}\tr C}\le Kn^5$$
where we estimate $\max_i|\lambda_i|^2$ as in the proof of Lemma~A.1 in \cite{zhidong2016central}. Thus finally we have
$$\E|\beta_k(w)|^p\le K_p.$$

By the same arguments, one can also verify the boundedness of $\phi_k(w)$ and $\psi_k(w)$.
\end{proof}

Then
$$\begin{aligned}\sqrt{N}P_{21} = & \tr D^{-1}(w)C-\E\tr D^{-1}(w)C \\
        = & \sum_{k=1}^n(\E_k-\E_{k-1})\tr (D^{-1}(w)-D_k^{-1}(w))C \\
    =& \frac{1}{N}\sum_{k=1}^n(\E_k-\E_{k-1})t_k\beta_k(w)\by_k^*D_k^{-1}(w)CD_k^{-1}(w)\by_k \\
    =& \frac{1}{N}\sum_{k=1}^n(\E_k-\E_{k-1})t_k\beta_k(w)\xi_k(w)  +\frac{1}{N}\sum_{k=1}^n(\E_k-\E_{k-1})t_k\beta_k(w)\tr(D_k^{-1}(w)C)^2 \\
    =:& J_1+J_2.\end{aligned}$$
Then we have
$$\begin{aligned}\E|J_1|^2 &= \frac{1}{N^2}\sum_{k=1}^n \E |(\E_k-\E_{k-1})t_k\beta_k(w)\xi_k(w)|^2 \\
    &\le \frac{K}{N^2}\sum_{k=1}^n t_k^2 \E |\beta_k(w)\xi_k(w)|^2 \\
    &\le \frac{K}{N^2}\sum_{k=1}^n t_k^2 \E^{1/2} |\beta_k(w)|^4 \E^{1/2}|\xi_k(w)|^4.\end{aligned}$$
By Lemma~\ref{lem:expect_bounds}, the expectation $\E|\beta_k(w)|^4$ is uniformly bounded; by Lemma~B.1 in \cite{zhidong2016central}, we have, for any $p\ge 1,q\ge 1$,
\begin{equation}
    \begin{aligned}\E^{1/q}|\xi_k(w)|^p & \le K_{p,q}\left[\left(\E|Z_{1,1}|^4\E\tr((D_k^{-1}(w)C)^2(CD_k^{-1}(\overline{w}))^2)\right)^{p/2}\right. \\ 
    & \quad +\left. \E|Z_{1,1}|^{2p}\E\tr((D_k^{-1}(w)C)^2(CD_k^{-1}(\overline{w}))^2)^{p/2}\right]^{1/q}\\
    & \le  K_{p,q}[N^{p/(2q)}+\varepsilon_n^{2p-6}n^{p-3}N]^{1/q}\le K_{p,q}(n^{\max\{p/2,(p-2)/q\}}).\end{aligned} \label{eq:quadratic_estim}
\end{equation}
So we have
$$\E|J_1|^2\le \frac{K}{N}\sum_{k=1}^n t_k^2=o(1).$$

For $J_2$, note that
$$\beta_k(w)=\tilde\beta_k(w)+\frac{1}{N}t_k\beta_k(w)\tilde\beta_k(w)\zeta_k(w),$$
then we have
$$J_2=\frac{1}{N^2}\sum_{k=1}^n(\E_k-\E_{k-1})t_k^2\beta_k(w)\tilde\beta_k(w)\zeta_k(w)\tr(D_k^{-1}(w)C)^2$$
Using the same arguments, we get 
$$\E|J_2|^2\le \frac{K}{N}\sum_{k=1}^n t_k^4=o(1).$$
Therefore in fact $P_{21}=o_P(1/\sqrt{N})$.

To prove that $P_{22}=o_P(1)$, we consider two cases:
\begin{enumerate}
    \item $\tr\Gamma/\sqrt{n}\to 0$; \label{enum:P_22_cond_0}
    \item $\tr\Gamma/\sqrt{n}\to a\ne 0$, where $a$ is a positive number or $\infty$. \label{enum:P_22_cond_n0}
\end{enumerate}
By extracting subsequences we can assume that one of these two cases holds. Indeed, we want to prove that for any $\varepsilon>0$, the following limit holds:
$$\P(|P_{22}|>\varepsilon)\to 0.$$
We can prove that from any subsequence of 
$$(\P(|P_{22}|>\varepsilon))_{n\ge 1},$$
there exists a subsequence converging to $0$. We know that from any subsequence of $(\Gamma_n)_n$, one can extract a subsequence $(\Gamma_{n_k})_{k\ge 1}$ satisfying one of the above two cases.

Suppose that Condition~\ref{enum:P_22_cond_0} holds. Then we have
\begin{multline}\sqrt{N}|\E g_C(w)-g^0_C(w)| = \frac{1}{\sqrt{N}}\left|\E\tr (wI-S)^{-1}C -\sum_{i=1}^N\frac{c_i}{w-\frac{c_i}{N}\sum_{k\ne j}\frac{t_k}{1-g^0_C(w)t_k}} \right| \\
= \frac{1}{\sqrt{N}|w|}\left|\E\tr (wI-S)^{-1}SC-\frac{1}{N}\sum_{k\ne j}\frac{t_k}{1-g^0_C(w)t_k}\sum_{i=1}^N\frac{c_i^2}{w-\frac{c_i}{N}\sum_{k\ne j}\frac{t_k}{1-g^0_C(w)t_k}}\right| \hfill\\
\le \frac{1}{\sqrt{N}|w|}\left|\E\tr (wI-S)^{-1}SC\right|+\frac{1}{N^{3/2}|w|}\left|\sum_{k\ne j}\frac{t_k}{1-g^0_C(w)t_k}\sum_{i=1}^N\frac{c_i^2}{w-\frac{c_i}{N}\sum_{k\ne j}\frac{t_k}{1-g^0_C(w)t_k}}\right| \hfill\\
= P_{221}+P_{222}\,. \hfill\end{multline}
One can see that 
$$\frac{1}{N}\left|\sum_{i=1}^N\frac{c_i^2}{w-\frac{c_i}{N}\sum_{k\ne j}\frac{t_k}{1-g^0_C(w)t_k}}\right|$$
is bounded, and 
$$\frac{1}{N^{1/2}|w|}\left|\sum_{k\ne j}\frac{t_k}{1-g^0_C(w)t_k}\right|=o(1),$$
thus $P_{222}=o(1)$. For $P_{221}$, by \eqref{eq:tr_DBC_o1} and the arguments afterwards, we have, for an arbitrary $p>1$,
$$\frac{1}{\sqrt{N}}|\E\tr D^{-1}(w)SC|\le \frac{K}{\sqrt{N}}\E\sum_{k=1}^N|\lambda_k|+o(n^{-p})=\frac{K}{\sqrt{N}}\E\tr S+O(n^{-1/2})=o(1),$$
where $\lambda_k$ are the eigenvalues of $S$, and we have used the equality
$$\E\tr S=\frac{1}{N}\tr C\tr \Gamma_{(j)}.$$

Now we suppose that Condition~\ref{enum:P_22_cond_n0} holds. We define
$$s(w):=\frac{1}{N}\tr D^{-1}(w), \quad s^{0}(w):=\frac{1}{N}\sum_{i=1}^N\frac{1}{w-\frac{c_i}{N}\sum_{k=1}^n\frac{t_k}{1-g^0_C(w)t_k}}.$$
Then 
\begin{equation}
    \begin{aligned}N(\E s_C(w)-s_C^{0}(w))=& \E\tr D^{-1}(w)-\sum_{i=1}^N\frac{1}{w-\frac{1}{N}\sum_{k=1}^nt_k\psi_k(w)c_i} \\
 &+\sum_{i=1}^N\frac{1}{w-\frac{c_i}{N}\sum_{k=1}^n\frac{t_k}{1-t_k\E g_C(w)}} -\sum_{i=1}^N\frac{1}{w-\frac{c_i}{N}\sum_{k=1}^n\frac{t_k}{1-g^0_C(w)t_k}} \\
 =& d_n+\sqrt{N}(\E g_C(w)-g^0_C(w))\frac{1}{\sqrt{N}}\sum_{k=1}^n\frac{t_k^2}{(1-g^0_C(w)t_k)(1-\E g_C(w)t_k)}\times \\
 &\frac{1}{N}\sum_{i=1}^N\frac{c_i}{(w-\frac{1}{N}\sum_{k=1}^nt_k\psi_k(w)c_i)(w-\frac{c_i}{N}\sum_{k=1}^n\frac{t_k}{1-g^0_C(w)t_k})}, \end{aligned} \label{eq:s_g_eq1}
\end{equation}
where $d_n:=\E g_C(w)-\sum_{i=1}^N\frac{1}{w-\frac{1}{N}\sum_{k=1}^nt_k\psi_k(w)c_i}$. On the other hand, following the calculation for (3.41) of \cite{zhidong2016central}, we have
 $$1-w\E s_C(w)=\frac{n}{N}-\frac{1}{N}\sum_{k=1}^n\E\beta_k(w).$$
By the definition of $s_C$ and the system of equations \eqref{eq:sys_equation_g}, we have
 $$1-w s_C^0(w)=\frac{n}{N}-\frac{1}{N}\sum_{k=1}^n\frac{1}{1-g^0_C(w)t_k}.$$
Taking the difference of the last two equalities, we obtain
\begin{multline}
    N(\E s_C(w)-s_C^0(w)) = w^{-1}\sum_{k=1}^n (\E\beta_k-\psi_k)+w^{-1}\sum_{k=1}^n\left(\psi_k-\frac{1}{1-g^0_C(w)t_k}\right) \\
    =w^{-1}\sum_{k=1}^n (\E\beta_k-\psi_k)+\sqrt{N}(\E g_C(w)-g^0_C(w))\frac{1}{\sqrt{N}}\sum_{k=1}^n\frac{t_k}{w(1-\E g_C(w)t_k)(1-g^0_C(w)t_k)}. \label{eq:s_g_eq2}
\end{multline}
Combining the equations \eqref{eq:s_g_eq1} and \eqref{eq:s_g_eq2}, we obtain
$$\begin{aligned} & \sqrt{N}(g_C(w)-g^0_C(w)) \times  \\
 & \frac{1}{\sqrt{N}}\left(\sum_{k=1}^n\frac{t_k}{w(1-\E g_C(w)t_k)(1-g^0_C(w)t_k)}-\sum_{k=1}^n\frac{t_k^2}{(1-g^0_C(w)t_k)(1-\E g_C(w)t_k)}\times \right.\\
  & \left.\frac{1}{N}\sum_{i=1}^N\frac{c_i}{(w-\frac{1}{N}\sum_{\ell=1}^nt_\ell\psi_\ell(w)c_i)(w-(\frac{1}{N}\sum_{\ell=1}^n\frac{t_\ell}{1-g^0_C(w)t_\ell})c_i)}\right) \\
 =& d_n-\frac{1}{w}\sum_{k=1}^n(\E \beta_k(w)-\psi_k(w)).\end{aligned}$$
Next we prove that $d_n=\E g_C(w)-\sum_{i=1}^N\frac{1}{w-\frac{1}{N}\sum_{k=1}^nt_k\psi_k(w)c_i}=o(1)$, $\sum_{k=1}^n(\E \beta_k(w)-\psi_k(w))=o(1)$, and that the multiplier 
\begin{equation}
    \begin{aligned}\frac{1}{\sqrt{N}}\left(\sum_{k=1}^n\frac{t_k}{w(1-\E g_C(w)t_k)(1-g^0_C(w)t_k)}-\sum_{k=1}^n\frac{t_k^2}{(1-g^0_C(w)t_k)(1-\E g_C(w)t_k)}\times \right.\\
 \left.\frac{1}{N}\sum_{i=1}^N\frac{c_i}{(w-\frac{1}{N}\sum_{\ell=1}^nt_\ell\psi_\ell(w)c_i)(w-(\frac{1}{N}\sum_{\ell=1}^n\frac{t_\ell}{1-g^0_C(w)t_\ell})c_i)}\right)\end{aligned} \label{eq:multiplier}
\end{equation}
is bounded away from $0$.

Repeating the calculations in Section~3.3 of \cite{zhidong2016central}, one can check that $d_n=o(1)$ and $\sum_{k=1}^n(\E \beta_k(w)-\psi_k(w))=o(1)$. The proof is similar so we omit the details. We just point out some adaptions due to the differences between the models. We define
$$W(w)=wI-\frac{1}{N}\sum_{k=1}^n t_k\psi_k(w)C.$$
Because $\|\frac{1}{N}\sum_{k=1}^n t_k\psi_k(w)C\|\le \frac{1}{N}\sum_{k=1}^n t_k|\psi_k(w)|\|C\|\to 0$, the matrix $W(w)$ is invertible for $N,n$ large enough, and $\|W^{-1}(w)\|$ is uniformly bounded. According to (3.33) of \cite{zhidong2016central} and the estimations afterwards, the result can be similarly deduced. We also remind that in Section~3.3 of \cite{zhidong2016central} the proof is made for Gaussian entries. Here we only assume that the entries $Z_{i,j}$ have finite sixth moment, so according to \eqref{eq:quadratic_estim}, we have for example the following estimation of $\mathcal H_2$ which is correspondingly defined in the equation next to (3.36) of \cite{zhidong2016central}:
$$\begin{aligned}\mathcal H_2&:=\frac{1}{N^3}t_k^3\phi_k^3(w)\E \beta_k(w)(\by_k^*D_k^{-1}(w)\by_k-\E\tr D_k^{-1}(w)C)^3 \\
    &\le \frac{K}{N^3}t_k^3\E^{1/2}|\beta_k(w)|^2\E^{1/2}|\by_k^*D_k^{-1}(w)\by_k-\E\tr D_k^{-1}(w)C|^6 \\
    &\le \frac{K}{N}t_k^3,\end{aligned}$$
thus by the formula next to (3.36) and the formula (3.38) of \cite{zhidong2016central}, we have
$$\left|\sum_{k=1}^n(\E \beta_k(w)-\psi_k(w))\right|\le \frac{K}{N}\sum_{k=1}^n(t_k^2 + t_k^3)=o(1).$$

To prove that the multiplier \eqref{eq:multiplier} is bounded from below, we recall that $w\to 1$, $g^0_C(w)\to 1$, $\E g_C(w)\to 1$, $\frac{1}{N}\sum_{\ell=1}^nt_\ell\psi_\ell(w)=o(1)$, and $\frac{1}{N}\sum_{\ell=1}^n\frac{t_\ell}{1-g^0_C(w)t_\ell}=o(1)$, thus for any $k=1,\dots,n$, as $N,n\to\infty$,
$$e_k:=\frac{1}{w(1-\E g_C(w)t_k)(1-g^0_C(w)t_k)}\to \frac{1}{(1-t_k)^2},$$
$$\begin{aligned}f_k & :=\frac{1}{(1-g^0_C(w)t_k)(1-\E g_C(w)t_k)}\times \frac{1}{N}\sum_{i=1}^N\frac{c_i}{(w-\frac{1}{N}\sum_{\ell=1}^nt_\ell\psi_\ell(w)c_i)(w-(\frac{1}{N}\sum_{\ell=1}^n\frac{t_\ell}{1-g^0_C(w)t_\ell})c_i)} \\ 
 & \to \frac{1}{(1-t_k)^2}.\end{aligned}$$
The above two limits are uniform in $k$, so we have, for any $\varepsilon>0$, for $N,n$ large enough,
$$\frac{1}{\sqrt{N}}\left|\sum_{k=1}^n (t_ke_k-t_k^2f_k)-\frac{t_k}{(1-t_k)}\right|\le \frac{\varepsilon}{\sqrt{N}}\sum_{k=1}^n t_k.$$
Therefore
$$\frac{1}{\sqrt{N}}\sum_{k=1}^n (t_ke_k-t_k^2f_k)\sim \frac{1}{\sqrt{N}}\sum_{k=1}^n \frac{t_k}{1-t_k}\sim \frac{1}{\sqrt{N}}\sum_{k=1}^n t_k,$$
which is lower bounded because we are just in the case where
$\frac{1}{\sqrt{n}}\sum_{k=1}^n t_k\to a\ne 0$. 

The proof of \eqref{eq:P_2_oP1} is complete.

\bibliography{main}{}

\begin{thebibliography}{10}

\bibitem{zhidong2016central}
Z.~Bai, H.~Li, and G.~Pan.
\newblock Central limit theorem for linear spectral statistics of large
  dimensional separable sample covariance matrices.
\newblock {\em arXiv preprint arXiv:1611.08979}, 2016.

\bibitem{bai1998no}
Z.~Bai and J.W. Silverstein.
\newblock No eigenvalues outside the support of the limiting spectral
  distribution of large-dimensional sample covariance matrices.
\newblock {\em Annals of probability}, pages 316--345, 1998.

\bibitem{bai2004clt}
Z.~Bai and J.W. Silverstein.
\newblock {CLT} for linear spectral statistics of large-dimensional sample
  covariance matrices.
\newblock {\em The Annals of Probability}, 32(1A):553--605, 2004.

\bibitem{bai2010functional}
Z.~Bai, X.~Wang, W.~Zhou, et~al.
\newblock Functional clt for sample covariance matrices.
\newblock {\em Bernoulli}, 16(4):1086--1113, 2010.

\bibitem{bai2008central}
Z.~Bai and J.~Yao.
\newblock Central limit theorems for eigenvalues in a spiked population model.
\newblock In {\em Annales de l'IHP Probabilit{\'e}s et statistiques},
  volume~44, pages 447--474, 2008.

\bibitem{bai2012sample}
Z.~Bai and J.~Yao.
\newblock On sample eigenvalues in a generalized spiked population model.
\newblock {\em Journal of Multivariate Analysis}, 106:167--177, 2012.

\bibitem{baik2005phase}
J.~Baik, G.~Ben~Arous, and S.~P{\'e}ch{\'e}.
\newblock Phase transition of the largest eigenvalue for nonnull complex sample
  covariance matrices.
\newblock {\em Annals of Probability}, pages 1643--1697, 2005.

\bibitem{bao2015universality}
Z.~Bao, G.~Pan, and W.~Zhou.
\newblock Universality for the largest eigenvalue of sample covariance matrices
  with general population.
\newblock {\em The Annals of Statistics}, 43(1):382--421, 2015.

\bibitem{bennett1962probability}
G.~Bennett.
\newblock Probability inequalities for the sum of independent random variables.
\newblock {\em Journal of the American Statistical Association},
  57(297):33--45, 1962.

\bibitem{boucheron2013concentration}
S.~Boucheron, G.~Lugosi, and P.~Massart.
\newblock {\em Concentration inequalities: A nonasymptotic theory of
  independence}.
\newblock Oxford university press, 2013.

\bibitem{cai2017limiting}
T.~Cai, X.~Han, and G.~Pan.
\newblock Limiting laws for divergent spiked eigenvalues and largest non-spiked
  eigenvalue of sample covariance matrices.
\newblock {\em arXiv preprint arXiv:1711.00217}, 2017.

\bibitem{couillet2014analysis}
R.~Couillet and W.~Hachem.
\newblock Analysis of the limiting spectral measure of large random matrices of
  the separable covariance type.
\newblock {\em Random Matrices: Theory and Applications}, 3(04):1450016, 2014.

\bibitem{davies2007linear}
E.B. Davies.
\newblock {\em Linear operators and their spectra}, volume 106.
\newblock Cambridge University Press, 2007.

\bibitem{ding2019spiked}
X.~Ding and F.~Yang.
\newblock Spiked separable covariance matrices and principal components.
\newblock {\em arXiv preprint arXiv:1905.13060}, 2019.

\bibitem{karoui2007tracy}
N.~El~Karoui.
\newblock Tracy-{W}idom limit for the largest eigenvalue of a large class of
  complex sample covariance matrices.
\newblock {\em The Annals of Probability}, pages 663--714, 2007.

\bibitem{fritzsche2012holomorphic}
Klaus Fritzsche and Hans Grauert.
\newblock {\em From holomorphic functions to complex manifolds}, volume 213.
\newblock Springer Science \& Business Media, 2012.

\bibitem{gubner2005theorems}
J.~A Gubner.
\newblock Theorems and fallacies in the theory of long-range-dependent
  processes.
\newblock {\em IEEE Transactions on Information Theory}, 51(3):1234--1239,
  2005.

\bibitem{guedon2014central}
O.~Gu{\'e}don, A.~Lytova, A.~Pajor, and L.~Pastur.
\newblock The central limit theorem for linear eigenvalue statistics of the sum
  of independent random matrices of rank one.
\newblock {\em Spectral Theory and Differential Equations. Amer. Math. Soc.
  Transl. Ser}, 2(233):145--164, 2014.

\bibitem{johansson2000shape}
K.~Johansson.
\newblock Shape fluctuations and random matrices.
\newblock {\em Communications in mathematical physics}, 209(2):437--476, 2000.

\bibitem{johnstone2001distribution}
I.M. Johnstone.
\newblock On the distribution of the largest eigenvalue in principal components
  analysis.
\newblock {\em Annals of statistics}, pages 295--327, 2001.

\bibitem{jonsson1982some}
D.~Jonsson.
\newblock Some limit theorems for the eigenvalues of a sample covariance
  matrix.
\newblock {\em Journal of Multivariate Analysis}, 12(1):1--38, 1982.

\bibitem{jung2009pca}
S.~Jung and J.~S. Marron.
\newblock {PCA} consistency in high dimension, low sample size context.
\newblock {\em The Annals of Statistics}, 37(6B):4104--4130, 2009.

\bibitem{knowles2017anisotropic}
A.~Knowles and J.~Yin.
\newblock Anisotropic local laws for random matrices.
\newblock {\em Probability Theory and Related Fields}, 169(1-2):257--352, 2017.

\bibitem{lee2016tracy}
J.O. Lee and K.~Schnelli.
\newblock Tracy-{W}idom distribution for the largest eigenvalue of real sample
  covariance matrices with general population.
\newblock {\em The Annals of Applied Probability}, 26(6):3786--3839, 2016.

\bibitem{li2019central}
H.~Li, Y.~Yin, and S.~Zheng.
\newblock Central limit theorem for linear spectral statistics of general
  separable sample covariance matrices with applications.
\newblock {\em arXiv preprint arXiv:1901.07746}, 2019.

\bibitem{lopez2007algebraic}
J.~L{\'o}pez-G{\'o}mez and C.~Mora-Corral.
\newblock {\em Algebraic Multiplicity of Eigenvalues of Linear Operators}.
\newblock Operator Theory: Advances and Applications. Birkh{\"a}user Basel,
  2007.

\bibitem{marvcenko1967distribution}
V.A. Mar{\v{c}}enko and L.A. Pastur.
\newblock Distribution of eigenvalues for some sets of random matrices.
\newblock {\em Mathematics of the USSR-Sbornik}, 1(4):457, 1967.

\bibitem{merlevede2019unbounded}
F.~Merlev{\`e}de, J.~Najim, and P.~Tian.
\newblock Unbounded largest eigenvalue of large sample covariance matrices:
  Asymptotics, fluctuations and applications.
\newblock {\em Linear Algebra and its Applications}, 2019.

\bibitem{najim2016gaussian}
J.~Najim and J.~Yao.
\newblock Gaussian fluctuations for linear spectral statistics of large random
  covariance matrices.
\newblock {\em The Annals of Applied Probability}, 26(3):1837--1887, 2016.

\bibitem{paul2009no}
D.~Paul and J.W. Silverstein.
\newblock No eigenvalues outside the support of the limiting empirical spectral
  distribution of a separable covariance matrix.
\newblock {\em Journal of Multivariate Analysis}, 100(1):37--57, 2009.

\bibitem{pipiras2017long}
V.~Pipiras and M.~S Taqqu.
\newblock {\em Long-range dependence and self-similarity}, volume~45.
\newblock Cambridge university press, 2017.

\bibitem{rao1976eigenvalues}
R.~V. Rao.
\newblock On the eigenvalues of integral operators with difference kernels.
\newblock {\em Journal of Mathematical Analysis and Applications},
  53(3):554--566, 1976.

\bibitem{shen2013surprising}
D.~Shen, H.~Shen, H.~Zhu, and J.S. Marron.
\newblock Surprising asymptotic conical structure in critical sample
  eigen-directions.
\newblock {\em arXiv preprint arXiv:1303.6171}, 2013.

\bibitem{silverstein1995strong}
J.W. Silverstein.
\newblock Strong convergence of the empirical distribution of eigenvalues of
  large dimensional random matrices.
\newblock {\em Journal of Multivariate Analysis}, 55(2):331--339, 1995.

\bibitem{silverstein1995empirical}
J.W. Silverstein and Z.~Bai.
\newblock On the empirical distribution of eigenvalues of a class of large
  dimensional random matrices.
\newblock {\em Journal of Multivariate analysis}, 54(2):175--192, 1995.

\bibitem{silverstein1995analysis}
J.W. Silverstein and S.~Choi.
\newblock Analysis of the limiting spectral distribution of large dimensional
  random matrices.
\newblock {\em Journal of Multivariate Analysis}, 54(2):295--309, 1995.

\bibitem{wachter1978strong}
K.W. Wachter.
\newblock The strong limits of random matrix spectra for sample matrices of
  independent elements.
\newblock {\em The Annals of Probability}, pages 1--18, 1978.

\bibitem{wang2017asymptotics}
W.~Wang and J.~Fan.
\newblock Asymptotics of empirical eigenstructure for high dimensional spiked
  covariance.
\newblock {\em Annals of statistics}, 45(3):1342, 2017.

\bibitem{yang2018edge}
F.~Yang.
\newblock Edge universality of separable covariance matrices.
\newblock {\em arXiv preprint arXiv:1809.04572}, 2018.

\bibitem{yin1986limiting}
Y.~Yin.
\newblock Limiting spectral distribution for a class of random matrices.
\newblock {\em Journal of multivariate analysis}, 20(1):50--68, 1986.

\bibitem{yin1988limit}
Y.~Yin, Z.~Bai, and P.R. Krishnaiah.
\newblock On the limit of the largest eigenvalue of the large dimensional
  sample covariance matrix.
\newblock {\em Probability theory and related fields}, 78(4):509--521, 1988.

\bibitem{zhang2018clt}
B.~Zhang, G.~Pan, J.~Gao, et~al.
\newblock {CLT} for largest eigenvalues and unit root testing for
  high-dimensional nonstationary time series.
\newblock {\em The Annals of Statistics}, 46(5):2186--2215, 2018.

\bibitem{lixin2006spectral}
L.X. Zhang.
\newblock {\em Spectral analysis of large dimentional random matrices}.
\newblock PhD thesis, National University of Singapore, 2006.

\end{thebibliography}
\bibliographystyle{plain}
\end{document}